\def\R{\mathbb{R}}
\def\P{\mathbb{P}}
\def\p{\partial}
\def\T{\mathbb{T}}
\def\vp{\varphi}
\newcommand{\dd}{\mathrm{d}}
\newcommand{\mb}{\mathbf}
\newcommand{\E}{\mathbb{E}}
\numberwithin{equation}{section}
\newtheorem{lemma}{Lemma}[section]
\newtheorem{theorem}{Theorem}[section]
\newtheorem{definition}{Definition}[section]
\newtheorem{remark}{Remark}[section]
\begin{document}
	\title{Diffusion approximations of Oja's online principal component analysis}

	\author[J.-G. Liu]{Jian-Guo Liu}
	\address{Department of Mathematics and Department of
		Physics, Duke University, Durham, NC}
	\email{jliu@math.duke.edu}
	
	\author[Z. Liu]{Zibu Liu}
	\address{Department of Mathematics, Duke University, Durham, NC}
	\email{zibu.liu@duke.edu}
	
	\keywords{machine learning, dimensionality reduction, online principal component analysis, gradient flow, stochastic differential equations, random matrix}
	\maketitle

\begin{abstract}
	Oja's algorithm of principal component analysis (PCA) has been one of the methods utilized in practice to reduce dimension. In this paper, we focus on the convergence property of the discrete algorithm. To realize that, we view the algorithm as a stochastic process on the parameter space and semi-group. We approximate it by SDEs, and prove large time convergence of the SDEs to ensure its performance. This process is completed in three steps. First, the discrete algorithm can be viewed as a semigroup: $S^k\varphi=\E[\varphi(\mb W(k))]$. Second, we construct stochastic differential equations (SDEs) on the Stiefel manifold, i.e. the diffusion approximation, to approximate the semigroup. By proving the weak convergence, we verify that the algorithm is 'close to' the SDEs. Finally, we use reversibility of the SDEs to prove long time convergence.   
\end{abstract}
\section{Introduction} \label{sec:notations}
Principal component anlysis (PCA) is a basic tool in dimension reduction. Due to explosion of data, command of efficient PCA algorithms is increasing. In this paper, we focus on the online PCA algorithm proposed by Oja in \cite{oja1985stochastic}, which is also named as the stochastic gradient ascent (SGA) method.

Suppose that $\mb x\in \R^n$ is a mean zero random variable (R.V.). Let 
\begin{align}
	\mb A :=\E[\mb x\mb x^T]
\end{align} 
be the covariance matrix. Traditional PCA algorithms diagonalize $\mb A$ to derive principal eigenvectors (i.e. the principal components) of $\mb A$. However, due to limitation of storage and high dimension of data in recent fields such as deep learning, explicit form of the dense matrix $\mb A$ may not be available. Therefore, practitioners prefer 'online' algorithms: it only requires a limited amount of samples of $\mb x$ in each iteration. To solve this problem, Oja proposed the following SGA method in \cite{oja1985stochastic}: 

\begin{equation} \label{eq:SGA}
\begin{aligned} 
\mb{w_j}(k) &= \mb{w_j}(k-1) + \eta(k)\mb{x}^T(k)\mb{w_j}(k-1)[\mb{x}(k)-(\mb{x}^T(k)\mb{w_j}(k-1))\mb{w_j}(k-1)\\
&-2\sum_{i=1}^{j-1}(\mb{x}^T(k)\mb{w_i}(k-1))\mb{w_i}(k-1)],\ j=1,\ 2,\ ...,\ p.
\end{aligned}
\end{equation}
This algorithm iterates the first $p$ principal components $\mb{w_j}\in \R^n, j=1,2,...,p$. Here $\mb x(k),\ k=1,2,...$ are independent samples of $\mb x$, $\eta(k),\ k=1,2,...$ are learning rates.

Algorithm \eqref{eq:SGA} determines a discrete time Markovian process, i.e. $\mb W(k)=[\mb{w_1}(k),\mb{w_2}(k),...,\mb{w_p}(k)]$. The main goal of this paper is to gain a good understanding of this random process (R.P.) from the view of semigroups, diffusion approximations and SDEs. 

First of all, as $\eta\to 0$, replacing $\mb x\mb x^T$ by $\mb A$ in \eqref{eq:SGA}, we derive the corresponding ODE:
\begin{gather}\label{eq:ndODE}
\left\{
\begin{split}
\mb{\dot{q_1}}&=\mb{Aq_1}-\mb{(q_1\cdot Aq_1)q_1},\\
\mb{\dot{q_j}}&=\mb{Aq_j}-\mb{(q_j\cdot Aq_j)q_j}-2\sum_{i=1}^{j-1}\mb{(q_i\cdot Aq_j)q_i},\ j=2,\ 3,\ ...,\ n.\\
\mb{q_i}(0)&=\mb{q_{i,0}},\ i=1,\ 2,\ ...,\ p.
\end{split}\right.
\end{gather}
Convergence properties including global convergence, stable manifolds and exponential convergence were thoroughly investigated in our previous work \cite{liu2022convergence}. In particular, we proved that for almost every initial value $\mb{Q_0}\in O(n)$, the solution exponentially converges to the eigenbasis (up to a sign). Moreover, the eigenvectors are aligned in a descending order of the eigenvalues. See Theorem 5.2 in \cite{liu2022convergence}. As far as we know, this is the first complete result providing global exponential convergence and closed formula for stable manifolds of a PCA flow \cite{blondel2004unsolved}.

Given convergence of the corresponding ODE, we aim at proving similar result for the discrete algorithm \eqref{eq:SGA} in this paper. We consider this problem in three steps: Viewing the SGA iteration as a semigroup, we construct proper diffusion approximations and prove convergence of diffusion approximations to ensure the performance of the algorithm. 

First, we view the SGA method as a semigroup. It can be reformulated in the following form:
\begin{align*}
	\mb W(k+1) = \mb W(k) + \eta\cdot\mb G(\mb x(k+1)\mb x^T(k+1),\mb W(k)).
\end{align*}
Here $\mb G\in \R^{n\times p}$ is defined in \eqref{def:G}. For arbitrary test function $\varphi\in C(\R^{n\times p})$, define
\begin{align}
	S\varphi(\mb W) := \E\varphi(\mb W+\eta\mb G(\mb x\mb x^T, \mb W)). 
\end{align}
Under this notation, if the initial datum of the SGA method is $\mb{W_0}$, then the Markovian property yields
\begin{align}
	 S^k\varphi(\mb W_0)=\mb \E\vp(\mb W(k)).
\end{align} 
Thus, convergence of the SGA method can also be interpreted as the convergence of the semigroup $\{S^k\}, k=1,2,...$.

Second, we construct appropriate diffusion approximations. Although the SGA method does not preserve $\mb W(k)$ to stay on the Stiefel manifold $O(n\times p)$, the desired result, i.e. the eigenbasis, is in $O(n\times p)$. Thus, we aim at deriving a good diffusion approximation of the semigroup $S$ and the SGA method. It should be an SDE that stays on the Stiefel manifold. The classical method to derive an SDE on a certain manifold is to project a Stratanovich SDE onto the desired manifold \cite{hsu2002stochastic}:
\begin{align*}
	\dd \mb W = \mathcal{P}_{T_\mb W\mathcal{M}}(\mb G(\mb A,\mb W)\dd t+\bm{\sigma}\circ\dd \mb B).
\end{align*}
Here $\mathcal{P}_{T_\mb Z\mathcal{M}}$ is the projection operator onto the tangent space at $\mb W$ on $\mathcal{M}$. If the semigroup is close to the diffusion process, then by proving convergence of the diffusion process in some sense, we can also guarantee the performance of the algorithm. 

Finally, we prove convergence of the diffusion process. The way to prove it is by seeking 'reversibility'. In fact, if the Fokker-Planck equation of the SDE can be recast in the following form:
\begin{align*}
	\p_t\rho = \nabla\cdot(\rho\nabla U+\nabla \rho) = \nabla\cdot \left(e^{-U}\nabla\left(\dfrac{\rho}{e^U}\right)\right)
\end{align*}
for some potential $U$, then the diffusion process satisfies detailed-balance condition, i.e., the process is reversible. Then, Poincare's inequality can ensure the exponential convergence of $\rho$ in a certain $L^2$ sense. This proves the convergence of SDEs, which also finishes our analysis. 

Under this framework of analysis, we will provide our main results and revise previous literature.

\subsection{Previous results and unsolves problems}
One of the important features of \eqref{eq:SGA} which other algorithms do not possess is its semi-decoupling feature: iteration of $\mb{w_j}$ does not depend on $\mb{w_i},\ i>j$. This feature facilitates its implementation in neural networks \cite{oja1992principal}, thus researchers focus on it. This feature was also extended to the corresponding ODE, i.e. \eqref{eq:finvODE}. Based on this semi-decoupling property, we also proved all convergence results of \eqref{eq:finvODE} in \cite{liu2022convergence}. 

However, a satisfying convergence result for \eqref{eq:SGA} is still wanting. Since Oja and Karhunen proposed \eqref{eq:SGA} in \cite{oja1985stochastic}, its convergence behavior has always been a focus in analysis of online PCA. Oja and Karhunen used stochastic approximation to derive almost sure convergence of \eqref{eq:SGA} under an implicit condition on the distribution of $\mb{x}$ \cite{oja1985stochastic,oja1982simplified,oja1992principal}. This implicit condition requires the iteration to visit a compact set containing the equilibrium for infinitely many times. However, this condition is difficult to verify in practice.

To improve Oja's result, more recently, authors in \cite{li2017diffusion} derived weak convergence of the first component of \eqref{eq:SGA} to a multidimensional Ornstein-Uhlenbeck process. Following \cite{li2017diffusion}, the algorithm conducting full orthonormalization was considered and the weak convergence of all components was derived \cite{liang2017nearly}. 

The diffusion approximation of the first component of \eqref{eq:SGA} was also considered in our previous work \cite{feng2018semigroups} in which both first -order and second-order approximation were derived. As a corollary, the weak convergence of the first component of the SGA method was verified. 
However, a diffusion approximation of the whole SGA iteration method \eqref{eq:SGA} is still an open problem.

The main tool utilized in \cite{feng2018semigroups} is the Lax equivalence theorem. An alternative stochastic analysis approach to prove the convergence is developed by Milstein \cite{milstein1994numerical}, which was adopted to derive diffusion approximations of the stochastic gradient descent (SGD) method \cite{li2017stochastic}.
\subsection{Main results}
First of all, we investigated properties of the semigroup $S^k,\ k=1,2...,$. In particular, we proved the stability and the regularity of it. For the stability, we proved that for a fixed terminal time $T$, if $\|\mb{W_0}\|_F\leq r$, then there exist constants $C$ and $\eta_0$ that depend on $r,T$ and the distribution of $\mb x$ such that 
\begin{align*}
	\|\mb{W}(k)\|_F\leq C
\end{align*}
holds for all   $k=1,2,...,\left[\dfrac{T}{\eta}\right]$. Here $\eta\in(0,\eta_0)$ is the learning rate in \eqref{eq:SGA}. See Lemma \ref{lmm:stability}. We proved the stability because it is necessary for the application of the Lax equivalence theorem. For the regularity, we prove that $S^k\varphi,\ k=1,2,...$ admit the same order regularity as $\varphi$, i.e.
\begin{align*}
	\|S^k\varphi\|_{C^m(B(\mb 0,r))}\leq C\|\varphi\|_{C^m(B(\mb 0,r'))}.
\end{align*}
Here $C$ and $r'$ are constants that depend on $m,r,T$ and the distribution of $\mb x$. See Theorem \ref{thm:semigroup} for details. 

Second, we constructed the desired diffusion approximations. We proved that the following family of SDEs 
\begin{align*}
\dot{\mb W}=\mb G(\mb A,\mb W)+\eta \mathcal{P}_{T_{\mb W}O(n)}\mb F(\mb W)+\sqrt{\eta}\mathcal{P}_{T_{\mb W}O(n)}\dot{\mb Z},\ \mb W(0)=\mb{W_0}\in O(n)
\end{align*}
will stay on the Stiefel manifold for all $t>0$. See Lemma \ref{lmm:orth}. Here
\begin{align*}
	\dot{\mb{Z}}=(\dot{Z}_{ij})_{n\times n},\  \dot{Z}_{ij}(\mb W)=H_{ijkl}(\mb W)\circ \dot{B}_{kl},
\end{align*}  
where $\mb H=(H_{ijkl})_{n\times n\times n\times n}$ are coefficients and $\dot{B}_{kl}$ is the white noise. See \eqref{eq:diffusionapprox} for detail. 

In fact, the SDE \eqref{eq:diffusionapprox} serves as the first-order diffusion approximation of the SGA method. we proved that under proper regularity conditions of the test function $\varphi$, there exists a constant $C_1=C_1(\mb x,T,\eta)$ such that
\begin{align*}
	\sup\limits_{\mb{W}\in O(n),k\eta\leq T}|S^k\varphi(\mb{W})-u(\mb{W},k\eta)|\leq C_1(\mb x,T,\vp)\eta.
\end{align*}
Here $u(\mb W,t)$ is the solution to the Kolmogorov equation determined by \eqref{eq:diffusionapprox}, with the initial value $\varphi$. See Theorem \ref{thm:diffusionapprox}. The main idea of the proof comes from the Lax equivalence theorem \cite{lax1956survey}: stability and consistence is equivalent to convergence. The consistence is ensured by Taylor's expansion, see Section \ref{sec:appendix} for details. 

A natural question is that whether higher- order approximation exists. Unfortunately, the answer is no. We proved that the possible second order approximation, which is an SDE, does not stay on the Stiefel manifold. See Lemma \ref{lmm:unstable}. This instability is probably due to the omitted higher-order terms in the SGA algorithm: second and higher-order (w.r.t. $\eta$) terms were neglected in \eqref{eq:SGA} when conducting the Gram-Schmidt orthogonalization.

Finally, for two special cases, we proved the exponential convergence of the SDE. As we introduced before, we seek for reversibility to prove the exponential convergence. 

First, we consider the overdamped Langevin equation on the Stiefel manifold:
\begin{align*} 
\dd\mb{Q}(t) = \mathcal{P}_{T_{\mb{Q}}O(n)}\circ(-\nabla U(\mb{Q})\dd t+\sigma\dd\mb{W}(t)).
\end{align*} 
The exponential convergence of it is proved in Section \ref{sec:diff}. If we select the potential $U$ as the weighted Rayleigh quotient (see \cite{liu2022convergence}) and let $\sigma =0$, then the Oja-Brockett flow \cite{brockett1991dynamical} is recovered. We have to emphasize that the overdamped Langevin equation is not a special case of \eqref{eq:diffusionapprox} since $\mb G$ in \eqref{eq:SGA} is not a gradient of a certain potential. 
  
Second, for $n=2$ of \eqref{eq:diffusionapprox}, the SDE is rewritten as
\begin{align*}
\mathrm{d}\mb{W} = \mb{F_1}(\mb{W})\mathrm{d}t +\sqrt{\eta}\cdot c(\mb{W})\mb{W}\circ\mathrm{d}\mb{Z}.
\end{align*} 
Here $\mb{F_1}$ is defined in \eqref{eq:finvODE} and $c(\mb W)$ is a scalar. See details in \eqref{eq:n=2}. Exponential convergence of this case is proved in Theorem \ref{thm:expconvoftheta}. The main approach is to consider the dynamics of the rotational angle of $\mb W\in O(2)$, which is a one-dimensional SDE, and the reversibility automatically holds. 

\section{Premier}
First, we rewrite \eqref{eq:SGA} by matrices. For $\mb{\Lambda},\mb{Q}\in\R^{n\times n}$, define
\begin{equation}\label{def:G}
\begin{aligned}
\mb{\Sigma}(\mb{\Lambda},\mb{Q})&:=\sum_{j=1}^n\sum_{k=1}^{j-1}\mb{E_j}\mb{Q}^T\mb{\Lambda QE_k} - \mb{E_k}\mb{Q}^T\mb{\Lambda QE_j},\\ 
\mb{G}(\mb{\Lambda},\mb{Q})&:=\mb{\Lambda Q}-\mb{QQ}^T\mb{\Lambda}\mb{Q}+\mb{Q}\mb{\Sigma}(\mb{\Lambda},\mb{Q}).
\end{aligned}
\end{equation}
Then \eqref{eq:SGA} also reads as
\begin{gather} \label{eq:rewriteleading}
\left\{
\begin{split}
\mb{A}(k)&=\mb{x}(k)\mb{x}^T(k),\\
\mb{W}(k)&=\mb{W}(k-1)+\eta_k\mb{G}(\mb{A}(k),\mb{W}(k-1)). 
\end{split}\right.
\end{gather}

In our previous paper \cite{liu2022convergence}, we thoroughly investigated corresponding ODE, which can be written as:
\begin{gather}\label{eq:smpODE}
\left\{
\begin{split}
\mb{\dot{Q}} &=  \mb{Q}\sum_{j=1}^n\sum_{k=1}^{j-1}(\mb{E_j}\mb{Q}^T\mb{AQE_k} - \mb{E_k}\mb{Q}^T\mb{AQE_j}),\\
\mb{Q}(0) &= \mb{Q_0}\in O(n).
\end{split}\right.
\end{gather}
We define
\begin{align}\label{def:F}
\mb{F_1(Q)}:=\mb{Q}\mb{\Sigma}(\mb{A},\mb{Q}). 
\end{align}
Thus one can rewrite \eqref{eq:smpODE} as
\begin{gather}\label{eq:finvODE}
\left\{
\begin{split}
\mb{\dot{Q}} &= \mb{F_1(Q)},\\
\mb{Q}(0) &= \mb{Q_0}\in O(n).
\end{split}\right.
\end{gather}
From now on, we will use \eqref{eq:finvODE} in all proofs.
\subsection{Notations and assumptions}
We will follow the convention of notations in our previous paper \cite{liu2022convergence}.

We assume that $\nu$ is compact, i.e., there exists a constant $M>0$ such that
\begin{align} \label{ass:L_inf}
\|\mb{x}\|_2\leq M.
\end{align}

In the following sections, we will adopt both the matrix representation and the component-wise representation of \eqref{eq:finvODE}, thus we clarify the notation here. $\mb{q_i},\ i=1,2,...,n$ represent the column vectors of $\mb{Q}$ in order, i.e.
\begin{align}
\mb{Q} = [\mb{q_1},\ \mb{q_2},\ ...,\  \mb{q_n}],
\end{align}  
while $\tilde{\mb{q_i}},\ i=1,2,...,n$ represent the row vectors of $\mb{Q}$ in order, i.e.
\begin{align}
\mb{Q}^T = [\tilde{\mb{q_1}}^T,\ \tilde{\mb{q_2}}^T,\ ...,\ \tilde{\mb{q_n}}^T].
\end{align}
For each entry, $q_{i,j},\ i,j=1,2,...,n$ represent the entries at $i$th row, $j$th column of the matrix $\mb{Q}$, i.e.
\begin{align}
\mb{q_j} = (q_{1,j},\ q_{2,j},\ ...,\  q_{n,j})^T.
\end{align} 
The canonical orthonormal basis in $\R^n$ is denoted as $\mb{e_j}, j=1,2,...,n$, which are written in column vectors, i.e.
\begin{align}
\mb{I_n} = [\mb{e_1},\ \mb{e_2},\ ...,\ \mb{e_n}].
\end{align}
Here $\mb{I_n}$ is the identity matrix of size $n$.

For $\mb{M},\mb{N}\in\R^{n\times n}$, $\|\mb{M}\|_{F}$ represents the Frobenius norm of $\mb{M}$ and $\langle \mb{M},\ \mb{N}\rangle_{F}$ represents the inner product in the Frobenius sense:
\begin{align}
\|\mb{M}\|=\sqrt{\mathrm{tr}(\mb{M}\mb{M}^T)},\ \langle \mb{M},\ \mb{N}\rangle_{F}=\mathrm{tr}(\mb{M}^T\mb{N}).
\end{align}
For $\mb{x}=(x_1,x_2,...,x_n)\in\R^n$,  $\|x\|_2$ represents the $\ell_2$ norm of $\mb{x}$, i.e.
\begin{align}
\|\mb{x}\|_{2}=\sqrt{\sum_{j=1}^n|x_j|^2}
\end{align}

Suppose that the eigenvalues of $\mb{A}$ are all single, i.e. of multiplicity one. Denote them as
\begin{align}\label{eq:Aeigenvalue} \lambda_1>\lambda_2>...>\lambda_n>0
\end{align}
in descending order. Without loss of generality, we assume that $\mb A$ is diagonal:
\begin{gather*}
\mb A=\mathrm{diag}\{\lambda_1,\lambda_2,...,\lambda_n\}.
\end{gather*}

By default, omitted proofs of Lemmas and other important but complicated computations are available Section \ref{sec:appendix}.

\section{Diffusion approximation of the online PCA algorithm} \label{sec:diff}
In this section, we consider the iteration scheme \eqref{eq:SGA}. 
We also assume that the learning rates are constant, i.e. $\eta_k=\eta>0, k=1,2,...$. Under these assumptions, we derived the diffusion approximation of \eqref{eq:SGA}. Our main results imply that \eqref{eq:finvODE} is the weak limit of \eqref{eq:SGA} as $\eta$ approaches 0. We also derived families of matrix-valued SDEs (invariant in the Steifel manifold) which serve as first order weak approximations. In particular, \eqref{eq:finvODE} is understood as a special case of these SDEs by taking the time step size $\eta=0$. 

\subsection{The semigroup}
The matrix-valued discrete time Markov process defined in \eqref{eq:SGA}, i.e. $\mb{W}(k), k=1,\ 2,\ ...$, is time homogeneous because $\mb{x}(k)$ share the same distribution.

Following the notations in \cite{feng2018semigroups}, we denote the expectation under the distribution of this Markov chain starting from $\mb{W_0}$ as $\E_{\mb{W_0}}$. In our discussion, $\mb{W_0}$ is assumed to be deterministic though it could be a random variable in general contexts. Denote the law of $\mb{W}(k)$ (starting from $\mb{W_0}$) as $\mu^k(\cdot;\mb{W_0})$ and the transition probability as $\mu(\mb{V},\cdot)$. Then by the Markov property, for any Borel set $E\subset \R^{n\times n}$,
\begin{align*}
\mu^{k+1}(E;\mb{W_0})=\int_{\R^{n\times n}}\mu(\mb{V},E)\mu^k(\mathrm{d}\mb{V};\mb{W_0})=\int_{\R^{n\times n}}\mu^k(E;\mb{U})\mu(\mb{W_0},\mathrm{d}\mb{U}).
\end{align*}  
For a fixed test function $\vp\in L^\infty(\R^{n\times n})$, define
\begin{align} \label{def:u}
	u^k(\mb{W_0}) = \E_{\mb{W_0}}\left[\vp(\mb{W}(k))\right]=\int_{\R^{n\times n}}\vp(\mb{V})\mu^k(\mathrm{d}\mb{V};\mb{W_0}),\ k=0,1,2,...
\end{align}
Here $\mb{W}(k)$ is defined in \eqref{eq:SGA}. The Markov property yields
\begin{align*}
	u^{k+1}(\mb{W_0}) &= \E_{\mb{W_0}}[\E_{\mb{W_0}}[\vp(\mb{W}(k+1))|\mb{W}(1)]] \\
	&= \E_{\mb{W_0}}[u^k(\mb{W}(1))] \\
	&= \int_{\R^{n\times n}}\mu(\mathrm{d}\mb{W_1},\mb{W_0})\int_{\R^{n\times n}}\vp(\mb{V})\mu^k(\mathrm{d}\mb{V};\mb{W_1}).
\end{align*}

Then by \eqref{eq:SGA}, we derive that for any $\mb{W}\in\R^{n\times n}$, 
\begin{align} \label{def:semigroup}
	u^{k+1}(\mb{W})=\E u^k(\mb{W}+\eta\mb{G}(\mb{x}\mb{x}^T,\mb{W})):=Su^k(\mb{W}),
\end{align}
hence $u^0(\mb{W})=\vp(\mb{W})$ and $\{S^k\}_{k\geq 0}$ forms a semigroup.

Before discussing the diffusion approximation, we derive some basic properties of the Markov chain and the semigroup. 

\begin{lemma}(stability) \label{lmm:stability}
	Fix a real number $r>0$ and a terminal time $T>0$. Let $\mb{W}(k), k=0,1,2,...$ be the Markov chain generated by \eqref{eq:SGA} with an initial datum $\mb{W_0}$ satisfying $\|\mb{W_0}\|_F\leq r$. Then there exist constants $\eta(r,M,T)>0$ and $C(r,M,T)>0$ which depend on $r$, $T$ and $M$ in \eqref{ass:L_inf} such that for any $0< \eta\leq\eta(r,M,T)$ and $k=0,1,2,...,\left[\dfrac{T}{\eta}\right]$,
	\begin{align}
		\P\left(\|\mb{W}(k)\|_{F}^2\leq C(r,M,T)\right)=1,
	\end{align}
	i.e., $\mb{W}(k)$ is uniformly bounded for any time discretization with time step size less than $\eta(r,M,T)$.
\end{lemma}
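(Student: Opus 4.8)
The plan is to control the growth of $\|\mb{W}(k)\|_F$ over one step and then iterate, using a discrete Gr\"onwall-type argument. The key observation is that the increment $\eta\,\mb{G}(\mb{x}\mb{x}^T,\mb{W})$ is polynomial in $\mb{W}$ with coefficients controlled by $\|\mb{x}\|_2$, which is bounded by $M$ via \eqref{ass:L_inf}. First I would establish a pointwise estimate of the form
\begin{align*}
\|\mb{G}(\mb{x}\mb{x}^T,\mb{W})\|_F \leq C_M\left(\|\mb{W}\|_F + \|\mb{W}\|_F^3\right)
\end{align*}
for a constant $C_M$ depending only on $M$; this follows directly from the explicit formula \eqref{def:G} for $\mb{G}$ (the terms $\mb{\Lambda Q}$, $\mb{QQ}^T\mb{\Lambda Q}$, and $\mb{Q\Sigma}(\mb{\Lambda},\mb{Q})$ are respectively linear, cubic, and cubic in $\mb{Q}$, and each carries a factor of $\|\mb{x}\|_2^2\leq M^2$), together with submultiplicativity of the Frobenius norm.

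Next I would examine $\|\mb{W}(k)\|_F^2$ directly rather than $\|\mb{W}(k)\|_F$, since squaring produces cleaner algebra. Writing $\mb{W}(k)=\mb{W}(k-1)+\eta\,\mb{G}(k)$ with $\mb{G}(k):=\mb{G}(\mb{x}(k)\mb{x}^T(k),\mb{W}(k-1))$, we have
\begin{align*}
\|\mb{W}(k)\|_F^2 = \|\mb{W}(k-1)\|_F^2 + 2\eta\langle \mb{W}(k-1),\mb{G}(k)\rangle_F + \eta^2\|\mb{G}(k)\|_F^2.
\end{align*}
The cross term is the crucial one: by the structure of \eqref{def:G}, $\langle \mb{W},\mb{Q\Sigma}(\mb{A},\mb{Q})\rangle_F$ involves a skew-symmetric-type cancellation, and $\langle \mb{W},\mb{AW}-\mb{WW}^T\mb{AW}\rangle_F = \mathrm{tr}(\mb{W}^T\mb{AW}) - \mathrm{tr}(\mb{W}^T\mb{WW}^T\mb{AW})$ is bounded above by a quadratic in $\|\mb{W}\|_F$ when $\|\mb{W}\|_F$ is large (the quartic term has a favorable sign). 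Combining this with the cubic bound on $\|\mb{G}(k)\|_F^2$ (which is degree six in $\|\mb{W}\|_F$ but carries $\eta^2$), I would derive a recursion of the shape
\begin{align*}
\|\mb{W}(k)\|_F^2 \leq (1+C\eta)\|\mb{W}(k-1)\|_F^2 + C\eta + C\eta^2\left(1+\|\mb{W}(k-1)\|_F^2\right)^3,
\end{align*}
valid deterministically (almost surely) since the bound on $\mb{x}$ is almost sure.

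The main obstacle is handling the $\eta^2$ term with the sixth-power growth: a naive Gr\"onwall iteration would blow up. The standard remedy is a bootstrap / stopping-time argument. Define the guess $C_0:=C_0(r,M,T)$ to be, say, twice the bound obtained by solving the comparison ODE $\dot y = C(1+y)$, $y(0)=r^2$, up to time $T$, i.e. $C_0 = 2\big((1+r^2)e^{CT}-1\big)$ or similar. Let $\tau$ be the first index $k$ at which $\|\mb{W}(k)\|_F^2$ exceeds $C_0$. On $\{k\leq\tau\}$ the cubic term is bounded by $C\eta^2(1+C_0)^3$, which is $O(\eta^2)$ with a constant depending only on $r,M,T$; summing over at most $T/\eta$ steps contributes only $O(\eta)$, so for $\eta$ small enough (this defines $\eta(r,M,T)$) the iterated recursion stays below $C_0$, forcing $\tau > T/\eta$. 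This closes the bootstrap and gives the claimed $C(r,M,T)=C_0$. I would then note that all estimates are pathwise, so the probability-one statement is immediate; the measurability of $\|\mb{W}(k)\|_F^2$ is routine since it is a continuous function of the i.i.d. samples $\mb{x}(1),\dots,\mb{x}(k)$.
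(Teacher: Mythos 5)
Your proposal follows essentially the same route as the paper's proof: expand $\|\mb{W}(k)\|_F^2$, note that the order-$\eta$ cross term $\langle\mb W,\mb W\mb\Sigma\rangle_F$ vanishes by skew-symmetry and the other order-$\eta$ terms give at worst a factor $(1+2M^2\eta)$ while the sixth-degree growth is confined to the $\eta^2$ terms, then close by induction (your stopping-time framing and the paper's direct inductive hypothesis are the same bootstrap) with $\eta(r,M,T)$ chosen small enough that the cubic term contributes only another $O(\eta)$ per step. The one cosmetic difference is that you propose a separate pointwise bound $\|\mb G\|_F\le C_M(\|\mb W\|_F+\|\mb W\|_F^3)$ and an inhomogeneous recursion with an extra $+C\eta$, whereas the paper estimates each trace term directly and keeps the recursion homogeneous, but both yield the same Gr\"onwall-type conclusion.
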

See Section \ref{sec:appendix} for the proof of this lemma. 
Based on Lemma \ref{lmm:stability}, we prove that $u^k(\mb{W})$ possesses the same regularity as the test function $\vp$. Admissible sets of test functions are
\begin{align}
C_b^m(\R^{n\times n}):=\left\{f\in C^m(\R^{n\times n})\ \big| \|f\|_{C^m(\R^{n\times n})}:=\sum_{|\alpha|\leq m}|D^{\alpha}f|_{\infty}\right\},\ m=1,2,....
\end{align} 
We denote the open ball in $(\R^{n\times n},\|\cdot\|_F)$ centered at $\mb{M}\in\R^{n\times n}$ with radius $r$ by $B(\mb{M},r)$, i.e.
\begin{align} \label{def:ball}
	B(\mb{M},r):=\{\mb{N}\in\R^{n\times n}: \|\mb{N-M}\|_F< r\}.
\end{align}  

\begin{theorem} \label{thm:semigroup}
	(properties of the semigroup) Fix a real number $r>0$ and a terminal time $T>0$. Let $\{u^k\}_{k\geq 0},\ \{S^k\}_{k\geq 0}$ be the semigroup defined in \eqref{def:semigroup} and $M$ be the upper bound in \eqref{ass:L_inf}. Suppose that $\vp\in C_b^m(\R^{n\times n})$ where $m\geq 1$. Then:
	\begin{enumerate}
		\item ($L^{\infty}$ contraction) $S:L^{\infty}(\R^{n\times n})\to L^{\infty}(\R^{n\times n})$ is a contraction;
		\item (regularity) there exist constants $\eta(r,m,M,T)>0$ and $C(r,m,M,T)>0$ such that for any $0<\eta\leq \eta(r,m,M,T)$:
		\begin{align}
		\|u^k\|_{C^m(B(\mb{0},r))}\leq  C(r,m,M,T)\|\vp\|_{C^m(B(\mb{0},e^{T(2M^2+1)/2}r))}.
		\end{align}
	\end{enumerate} 
\end{theorem}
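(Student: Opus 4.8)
\noindent\emph{Proof strategy.}
Claim (1) is immediate from $|\E Y|\le\E|Y|$: for $\vp\in L^\infty(\R^{n\times n})$ one has $|S\vp(\mb{W})|\le\E\,|\vp(\mb{W}+\eta\mb{G}(\mb{x}\mb{x}^T,\mb{W}))|\le\|\vp\|_{L^\infty}$, so $S$, and hence each $S^k$, is an $L^\infty$-contraction. The content is claim (2), for which the plan is to prove a single-step ``propagation of regularity'' inequality and then iterate it along a family of nested balls by a discrete Gr\"onwall argument. Throughout, write $S\vp(\mb{W})=\E[\vp(\Phi_\eta(\mb{W},\mb{x}))]$ with $\Phi_\eta(\mb{W},\mb{x}):=\mb{W}+\eta\mb{G}(\mb{x}\mb{x}^T,\mb{W})$; for each admissible $\mb{x}$ the map $\Phi_\eta(\cdot,\mb{x})-\mathrm{Id}=\eta\mb{G}(\mb{x}\mb{x}^T,\cdot)$ is a fixed polynomial in $\mb{W}$ of degree $3$ whose coefficients are controlled through \eqref{ass:L_inf}.

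\smallskip
\noindent\emph{Step 1 (domain expansion).}
First I would show that, for $\eta$ small enough depending on $(r,m,M,T)$, the map $\Phi_\eta(\cdot,\mb{x})$ sends $B(\mb{0},\rho)$ into $B(\mb{0},\rho\,e^{\eta(2M^2+1)/2})$ for every $\rho\le e^{T(2M^2+1)/2}r$ and every admissible $\mb{x}$. This rests on the identity
\[
\|\Phi_\eta(\mb{W},\mb{x})\|_F^2=\|\mb{W}\|_F^2+2\eta\,\la\mb{W},\mb{G}(\mb{x}\mb{x}^T,\mb{W})\ra_F+\eta^2\|\mb{G}(\mb{x}\mb{x}^T,\mb{W})\|_F^2
\]
together with the key estimate $\la\mb{W},\mb{G}(\mb{x}\mb{x}^T,\mb{W})\ra_F\le M^2\|\mb{W}\|_F^2$, which follows from three observations: $\mb{\Sigma}(\mb{x}\mb{x}^T,\mb{W})$ is skew-symmetric because $\mb{W}^T\mb{x}\mb{x}^T\mb{W}$ is symmetric, hence $\la\mb{W},\mb{W}\mb{\Sigma}(\mb{x}\mb{x}^T,\mb{W})\ra_F=0$; the deflation term contributes $-\la\mb{W},\mb{W}\mb{W}^T\mb{x}\mb{x}^T\mb{W}\ra_F=-\mb{x}^T(\mb{W}\mb{W}^T)^2\mb{x}\le0$; and $\la\mb{W},\mb{x}\mb{x}^T\mb{W}\ra_F=\|\mb{x}^T\mb{W}\|_2^2\le M^2\|\mb{W}\|_F^2$ by \eqref{ass:L_inf}. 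Since $\mb{G}(\mb{x}\mb{x}^T,\mb{0})=\mb{0}$ and $\mb{G}$ is polynomial, $\|\mb{G}(\mb{x}\mb{x}^T,\mb{W})\|_F^2\le C_1(\rho,M)\|\mb{W}\|_F^2$ on $B(\mb{0},\rho)$, so the $\eta^2$-term is dominated by $\eta\|\mb{W}\|_F^2$ once $\eta C_1\le1$, giving $\|\Phi_\eta(\mb{W},\mb{x})\|_F^2\le(1+\eta(2M^2+1))\|\mb{W}\|_F^2$ and hence the claimed dilation bound. Composing $k\le[T/\eta]$ steps shows that, starting from any $\mb{W_0}$ with $\|\mb{W_0}\|_F\le r$, the $\Phi_\eta$-orbit, equivalently the support of the measure defining $S^k\vp(\mb{W_0})$, stays inside $B(\mb{0},e^{T(2M^2+1)/2}r)$. (Lemma \ref{lmm:stability} may alternatively be invoked here to obtain, in one shot, a uniform a priori bound along the chain and thus uniform bounds on $\mb{G}$ and its $\mb{W}$-derivatives.)

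\smallskip
\noindent\emph{Step 2 (one-step regularity) and Step 3 (iteration).}
Since $D_{\mb{W}}\Phi_\eta=\mathrm{Id}+\eta D_{\mb{W}}\mb{G}$ and every $\mb{W}$-derivative of $\Phi_\eta$ of order $\ge2$ equals $\eta$ times the corresponding derivative of $\mb{G}$, differentiating $S\psi=\E[\psi\circ\Phi_\eta]$ and using the Fa\`a di Bruno expansion gives, for $|\alpha|\le m$,
\[
D^\alpha(S\psi)(\mb{W})=\E\big[(\partial^\alpha\psi)(\Phi_\eta(\mb{W},\mb{x}))\big]+\eta\,R_\alpha(\mb{W}),
\]
where $R_\alpha$ involves only derivatives of $\psi$ of order $\le m$ evaluated at $\Phi_\eta$, multiplied by polynomials in $\mb{W}$ coming from the $\mb{W}$-derivatives of $\mb{G}$; on any ball of radius $\le e^{T(2M^2+1)/2}r$ these polynomials are bounded by constants depending on $r,m,M,T$. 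Taking $L^\infty$-norms, summing over $|\alpha|\le m$, and using Step 1 to locate the arguments $\Phi_\eta(\mb{W},\mb{x})$, this yields, for all $\psi\in C_b^m(\R^{n\times n})$ and all $\rho\le e^{T(2M^2+1)/2}r$,
\[
\|S\psi\|_{C^m(B(\mb{0},\rho))}\le(1+C\eta)\,\|\psi\|_{C^m(B(\mb{0},\rho\,e^{\eta(2M^2+1)/2}))},\qquad C=C(r,m,M,T).
\]
Applying this successively with $\psi=u^{k-1},u^{k-2},\dots,u^0=\vp$ along the radii $\rho_j:=e^{j\eta(2M^2+1)/2}r$ (so $\rho_0=r$ and $\rho_k\le e^{T(2M^2+1)/2}r$) and using $(1+C\eta)^k\le e^{CT}$ gives
\[
\|u^k\|_{C^m(B(\mb{0},r))}\le(1+C\eta)^k\,\|\vp\|_{C^m(B(\mb{0},\rho_k))}\le e^{CT}\,\|\vp\|_{C^m(B(\mb{0},\,e^{T(2M^2+1)/2}r))},
\]
which is the assertion, with $C(r,m,M,T)=e^{CT}$.

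\smallskip
\noindent I expect the main obstacle to be Step 2: one must organize the Fa\`a di Bruno bookkeeping so that the recursion closes with multiplicative factor $1+O(\eta)$ rather than $O(1)$, i.e.\ show carefully that every contribution except the one built from the identity part of $D\Phi_\eta$ genuinely carries an explicit $\eta$ after taking expectations, while no term raises the differentiation order of $\psi$ above $m$. Keeping the polynomial bounds on $\mb{G}$ and all its $\mb{W}$-derivatives uniform over the growing balls, which is where Lemma \ref{lmm:stability} and the compactness assumption \eqref{ass:L_inf} enter, is then routine.
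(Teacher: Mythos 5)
Your proposal is correct and follows essentially the same route as the paper: part (1) is the same triangle-inequality argument, and for part (2) both proofs combine the one-step radius dilation $\|\mb{W}+\eta\mb{G}\|_F^2\le(1+(2M^2+1)\eta)\|\mb{W}\|_F^2$ (the paper cites Lemma~\ref{lmm:stability}; you rederive it, a bit more cleanly, via the skew-symmetry of $\mb{\Sigma}$ and positivity of the deflation term) with a one-step $C^m$ estimate of the form $\|S\psi\|_{C^m(B(\mb{0},\rho))}\le(1+C\eta)\|\psi\|_{C^m(B(\mb{0},(1+C''\eta)\rho))}$, iterated along nested balls with $(1+C\eta)^{T/\eta}\le e^{CT}$. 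The only presentational difference is that the paper carries out the chain-rule computation explicitly for $m=1$ and asserts the general $m$ case is ``similar,'' whereas you phrase the general case directly through Fa\`a di Bruno bookkeeping and correctly flag that keeping the non-leading terms tagged with an explicit factor of $\eta$ is the one point requiring care.
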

\begin{proof}	
	By Lemma \ref{lmm:stability}, for any $\eta\leq \eta(r,M,T)$, there exists $C(r,M,T)>0$ such that $\|\mb{W}\|_F\leq C(r,M,T)$ holds almost surely for $k=0,1,2,...,[T/\eta]$. Notice that $\mb{G}$ is a polynomial w.r.t. $\mb{W}$, so there exists a constant $C'(r,M,T)>0$ such that for any $\mb{W}\in B(\mb{0},r)$ and indices $(i,j)$ and $(i',j')$,
	\begin{align} \label{eq:boundG}
	\left|\dfrac{\p(\mb{G}(\mb{x}(k)\mb{x}(k)^T,\mb{W}))_{i',j'}}{\p w_{i,j}}\right|\leq C'(r,M,T).
	\end{align}
	Here $(\mb{G})_{i',j'}$ represents the entry at the $i'$th row and $j'$th column in the matrix $\mb{G}$.  
	
	According to the proof of Lemma \ref{lmm:stability}, we know that 
	\begin{align*}
	\|\mb{W}(k)\|_F^2 \leq (1+(2M^2+1)\eta)\|\mb{W}(k-1)\|_F^2, \ k=1,2,...,[T/\eta].
	\end{align*}
	Thus if $\mb{W}\in B(\mb{0},r)$, then 
	\begin{align} \label{eq:bounditer}
	\mb{W}+\eta \mb{G}(\mb{x}(k)\mb{x}(k)^T,\mb{W})\in B(\mb{0},r\sqrt{1+(2M^2+1)\eta})\subset B\left(\mb{0},\left(1+\dfrac{(2M^2+1)\eta}{2}\right)r\right).
	\end{align}
	Denote $C''=(2M^2+1)/2$. Now we proceed to prove the theorem.
	
	(1) Suppose that $\psi\in L^{\infty}(\R^{n\times n})$, then the $L^{\infty}$ contraction is derived directly by \eqref{def:semigroup}:
	\begin{equation}
		\begin{aligned}
		\|S\psi\|_{L^{\infty}(\R^{n\times n})} &= \sup\limits_{\mb{W}\in\R^{n\times n}}|\E\psi(\mb{W}+\eta\mb{G}(\mb{x}\mb{x}^T,\mb{W}))|\\
		&\leq \sup\limits_{\mb{W}\in\R^{n\times n}}|\psi(\mb{W})|\\
		&\leq \|\psi\|_{L^{\infty}(\R^{n\times n})}.
		\end{aligned}
	\end{equation}
	
	(2)
	We prove the case $m=1$ by induction. The proof for the case of $m\geq 1$ is similar. Because $u^0=\vp$, so the conclusion holds for $k=0$. Now for $k\geq 0$, by the dominant convergence theorem (DCT), we have for any $\mb{W}\in B(\mb{0},r)$ and $1\leq i,j\leq n$,
	\begin{equation} \label{eq:chainrule}
		\begin{aligned} 
		\dfrac{\p u^{k+1}}{\p w_{i,j}}(\mb{W}) &= \sum_{1\leq i',j'\leq n}\E\left[\dfrac{\p u^{k}}{\p w_{i',j'}}(\mb{W}+\eta \mb{G}(\mb{x}(k)\mb{x}(k)^T,\mb{W}))\cdot \dfrac{\p (\mb{W}+\eta \mb{G}(\mb{x}(k)\mb{x}(k)^T,\mb{W}))_{i',j'}}{\p w_{i,j}}\right]\\
		&= \E\left[\dfrac{\p u^{k}}{\p w_{i,j}}(\mb{W}+\eta \mb{G}(\mb{x}(k)\mb{x}(k)^T,\mb{W}))\right]\\
		&+\eta\sum_{1\leq i',j'\leq n}\E\left[\dfrac{\p u^{k}}{\p w_{i',j'}}(\mb{W}+\eta \mb{G}(\mb{x}(k)\mb{x}(k)^T,\mb{W}))\cdot \dfrac{\p(\mb{G}(\mb{x}(k)\mb{x}(k)^T,\mb{W}))_{i',j'}}{\p w_{i,j}}\right].
		\end{aligned}
	\end{equation}
 	Remember that \eqref{eq:bounditer} holds, so
 	\begin{equation*}
 		\left| \E\left[\dfrac{\p u^{k}}{\p w_{i,j}}(\mb{W}+\eta \mb{G}(\mb{x}(k)\mb{x}(k)^T,\mb{W}))\right]\right|\leq \left\|\dfrac{\p u^{k}}{\p w_{i,j}}\right\|_{L^\infty(B(\mb 0,(1+C''\eta)r))}
 	\end{equation*} 
 	Substituting this into \eqref{eq:chainrule} yields
	\begin{equation*}
		\begin{aligned}
		\left|\dfrac{\p u^{k+1}}{\p w_{i,j}}(\mb{W})\right|&\leq \left\|\dfrac{\p u^{k}}{\p w_{i,j}}\right\|_{L^\infty(B(\mb 0,(1+C''\eta)r))}\\
		&+\eta C'(r,M,T)\sum_{1\leq i',j'\leq n}\left|\dfrac{\p u^{k}}{\p w_{i',j'}}(\mb{W}+\eta \mb{G}(\mb{x}(k)\mb{x}(k)^T,\mb{W}))\right|.
		\end{aligned}
	\end{equation*}
	Then using \eqref{eq:boundG}, taking $L^{\infty}$ norm on both sides and summing up over $1\leq i,j\leq n$, we derive
	\begin{equation*}
	\begin{aligned}
	\sum_{1\leq i,j\leq n}\left\|\dfrac{\p u^{k+1}}{\p w_{i,j}}\right\|_{L^{\infty}(B(\mb{0},r))}&\leq 
	\sum_{1\leq i,j\leq n}\left\|\dfrac{\p u^{k}}{\p w_{i,j}}\right\|_{L^{\infty}(B(\mb{0},(1+C''\eta)r))}\\
	&+n^2C'(r,M,T)\eta\|u^k\|_{C^1(B(\mb{0},(1+C''\eta)r))}.
	\end{aligned}
	\end{equation*}
	This inequality yields
	\begin{align}
		\|u^{k+1}\|_{C^1(B(\mb{0},r))}\leq (1+n^2C'(r,M,T)\eta)\|u^k\|_{C^1(B(\mb{0},(1+C''\eta)r))}.
	\end{align}
	By induction, we have
	\begin{equation}
		\begin{aligned}
		\|u^{k}\|_{C^1(B(\mb{0},r))}&\leq (1+n^2C'(r,M,T)\eta)^k\|u^0\|_{C^1(B(\mb{0},(1+C''\eta)^kr))}\\
		&\leq (1+n^2C'(r,M,T)\eta)^{T/\eta}\|\vp\|_{C^1(B(\mb{0},(1+C''\eta)^{T/\eta}r))}\\
		&\leq e^{n^2C'(r,M,T)T}\|\vp\|_{C^1(B(\mb{0},e^{T(2M^2+1)/2}r))}.
		\end{aligned}
	\end{equation}
\end{proof}

\subsection{The diffusion approximation}
In this section, we discuss the diffusion approximation of the semigroup \eqref{def:semigroup}. 

\subsubsection{SDEs on the Stiefel manifold}
Because the SGA method aims to derive the correct unit eigenbasis, so we expect that our SDE should stay on the Stiefel manifold. Therefore, we consider the following family of SDEs:
\begin{align}\label{eq:diffusionapprox}
	\dot{\mb W}=\mb G(\mb A,\mb W)+\eta \mathcal{P}_{T_{\mb W}O(n)}\mb F(\mb W)+\sqrt{\eta}\mathcal{P}_{T_{\mb W}O(n)}\dot{\mb Z},\ \mb W(0)=\mb{W_0}\in O(n).
\end{align}
Here $\mathcal{P}_{T_{\mb W}O(n)}$ is the projection onto the tangent space of $O(n)$ at $\mb W$, see \eqref{def:P}; $\dot{\mb Z}$ is defined as
\begin{align}
	\dot{\mb Z}(\mb W)=(\dot{Z}_{ij})_{n\times n},\ \dot{Z}_{ij}(\mb W)=H_{ijkl}(\mb W)\circ\dot{B}_{kl};
\end{align}
$\mb H(\mb W)=(H_{ijkl}(\mb W))_{n\times n\times n\times n}$ is the coefficient tensor of the Brownian motion; $\mb B=(B_{ij})_{n\times n}$ is the standard Brownian motion in $\R^{n\times n}$; The notation '$\circ$' represents that \eqref{eq:diffusionapprox} is an SDE in the Stratonovich sense.

By letting $\eta=0$ in \eqref{eq:diffusionapprox}, the SDE degenerates to the ODE
\begin{align*}
	\dot{\mb W}=\mb G(\mb A,\mb W),\  \mb{W}(0)=\mb{W_0}\in O(n).
\end{align*}
This is exactly \eqref{eq:finvODE}. Thus we expect that \eqref{eq:diffusionapprox} serves as the diffusion approximation of the SGA method.

We first check that for arbitrary $\mb F(\mb W)\in\R^{n\times n}$ and $\mb H(\mb W)\in \R^{n\times n\times n\times n}$, \eqref{eq:diffusionapprox} stays on the Stiefel manifold if $\mb{W_0}\in O(n)$. As preparation, we first rewrite the projection operator.

\begin{lemma} \label{lmm:projection}
	Let $\mathcal{P}_{T_{\mb W}O(n)}$ defined as in \eqref{def:P} for some $\mb W=(w_{ij})_{n\times n}\in O(n)$. Let
	\begin{align}\label{def:tensorP}
		\mb P:=(P_{ijkl})_{n\times n\times n\times n},\ P_{ijkl}:=\dfrac{1}{2}(w_{is}w_{ks}\delta_{jl}-w_{kj}w_{il}).
	\end{align}
	Then we have:
	\begin{enumerate}[(i)]
		\item (projection) for any $\mb F=(f_{ij})_{n\times n}$,
		\begin{align}
			(\mathcal{P}_{T_{\mb W}O(n)}\mb F)_{ij}=P_{ijkl}f_{kl}=\dfrac{1}{2}(f_{ij}-w_{il}f_{kl}w_{kj});
		\end{align}
		\item (symmetry) $P_{ijkl}=P_{klij}$, i.e. $\mb P$ is symmetric;
		\item (idempotence) $P_{ijkl}P_{klrs}=P_{ijrs}$, i.e. $\mb P^2=\mb P$. 
 	\end{enumerate}
\end{lemma}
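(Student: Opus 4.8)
The plan is to take \eqref{def:P} in its explicit form and then verify (i)–(iii) by elementary index manipulation, the only tools being the two orthogonality relations $w_{is}w_{ks}=\delta_{ik}$ (that is $\mb{W}\mb{W}^T=\mb{I_n}$) and $w_{si}w_{sk}=\delta_{ik}$ (that is $\mb{W}^T\mb{W}=\mb{I_n}$), both valid since $\mb{W}\in O(n)$. For the setup I would recall that $T_{\mb{W}}O(n)=\{\mb{W}\mb{S}:\mb{S}^T=-\mb{S}\}$ with Frobenius-orthogonal complement $\{\mb{W}\mb{S}:\mb{S}^T=\mb{S}\}$ (the two subspaces are orthogonal because $\mathrm{tr}(\mb{S}_1^T\mb{S}_2)=0$ when $\mb{S}_1$ is skew and $\mb{S}_2$ symmetric, and their dimensions sum to $n^2$ with $\mb{W}$ invertible). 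Hence \eqref{def:P} is $\mathcal{P}_{T_{\mb{W}}O(n)}\mb{F}=\mb{W}\cdot\tfrac12(\mb{W}^T\mb{F}-\mb{F}^T\mb{W})=\tfrac12(\mb{F}-\mb{W}\mb{F}^T\mb{W})$, whose $(i,j)$ entry is $\tfrac12(f_{ij}-w_{il}f_{kl}w_{kj})$.

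\textbf{Step (i) and (ii).} For (i) I would expand $P_{ijkl}f_{kl}=\tfrac12 w_{is}w_{ks}\delta_{jl}f_{kl}-\tfrac12 w_{kj}w_{il}f_{kl}$: in the first summand $\delta_{jl}$ collapses $l\to j$ and $w_{is}w_{ks}=\delta_{ik}$ collapses $k\to i$, leaving $\tfrac12 f_{ij}$, while the second summand is already $-\tfrac12 w_{il}f_{kl}w_{kj}$; this matches the entry of $\mathcal{P}_{T_{\mb{W}}O(n)}\mb{F}$ computed above. For (ii) I would swap the index pairs in \eqref{def:tensorP}: $P_{klij}=\tfrac12(w_{ks}w_{is}\delta_{lj}-w_{il}w_{kj})$, and since $w_{ks}w_{is}=w_{is}w_{ks}$, $\delta_{lj}=\delta_{jl}$, and the two scalar factors $w_{il}w_{kj}=w_{kj}w_{il}$ commute, this equals $P_{ijkl}$.

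\textbf{Step (iii).} This is the only step with genuine bookkeeping, and the one point to be careful about is renaming the contracted dummy in the second factor so it does not clash with the one in the first: write $P_{ijkl}=\tfrac12(w_{ia}w_{ka}\delta_{jl}-w_{kj}w_{il})$ and $P_{klrs}=\tfrac12(w_{kb}w_{rb}\delta_{ls}-w_{rl}w_{ks})$ and contract over $k,l$. The product splits into four terms, three of which collapse via $\mb{W}^T\mb{W}=\mb{I_n}$ or $\mb{W}\mb{W}^T=\mb{I_n}$: the $(1,1)$ term gives $\tfrac14\delta_{ir}\delta_{js}$, the $(1,2)$ and $(2,1)$ terms each give $-\tfrac14 w_{is}w_{rj}$, and the $(2,2)$ term gives $\tfrac14\delta_{ir}\delta_{js}$. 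Summing, $P_{ijkl}P_{klrs}=\tfrac12\delta_{ir}\delta_{js}-\tfrac12 w_{is}w_{rj}$, and rewriting $\delta_{ir}=w_{ia}w_{ra}$ this is exactly $P_{ijrs}$, i.e. $\mb{P}^2=\mb{P}$. Alternatively, once (i) is in hand one may argue abstractly: $\mb{P}$ represents the orthogonal projection $\mathcal{P}_{T_{\mb{W}}O(n)}$, which is idempotent, so $P_{ijkl}P_{klrs}g_{rs}=(\mathcal{P}^2\mb{g})_{ij}=(\mathcal{P}\mb{g})_{ij}=P_{ijrs}g_{rs}$ for every $\mb{g}\in\R^{n\times n}$, and letting $\mb{g}$ range over the standard basis forces the tensor identity. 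I expect no real obstacle here; the difficulty in (iii) is purely clerical, namely tracking the Einstein summation indices correctly.
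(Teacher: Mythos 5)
Your proof is correct and uses essentially the same approach as the paper: direct index manipulation using the two orthogonality identities $w_{is}w_{ks}=\delta_{ik}$ and $w_{si}w_{sk}=\delta_{ik}$. The only cosmetic difference is that the paper first reduces $P_{ijkl}$ to $\tfrac12(\delta_{ik}\delta_{jl}-w_{kj}w_{il})$ before carrying out the contraction in (iii), whereas you keep the unsimplified form with fresh dummy indices $a,b$ and collapse the contracted $k,l$ sums afterward; both routes produce the same four-term expansion and the same identity $P_{ijkl}P_{klrs}=\tfrac12(\delta_{ir}\delta_{js}-w_{rj}w_{is})=P_{ijrs}$.
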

\begin{proof}
	Because $\mb W\in O(n)$, so $w_{is}w_{ks}=\delta_{ik}$ and 
	\begin{align}\label{eq:P1}
		P_{ijkl}=\dfrac{1}{2}(\delta_{ik}\delta_{jl}-w_{kj}w_{il}).
	\end{align}
	We first prove (i). Because $\mathcal{P}_{T_{\mb W}O(n)}\mb F=\dfrac{1}{2}(\mb F-\mb W\mb F^T\mb W)$, we have
	\begin{align*}
		(\mathcal{P}_{T_{\mb W}O(n)}\mb F)_{ij}=\dfrac{1}{2}(f_{ij}-w_{il}f_{kl}w_{kj}).
	\end{align*}
	By \eqref{eq:P1}, we derive
	\begin{align*}
		P_{ijkl}f_{kl}=\dfrac{1}{2}(\delta_{ik}\delta_{jl}-w_{kj}w_{il})f_{kl}=\dfrac{1}{2}(f_{ij}-w_{il}f_{kl}w_{kj}).
	\end{align*}
	Thus
	\begin{align*}
		(\mathcal{P}_{T_{\mb W}O(n)}\mb F)_{ij}=P_{ijkl}f_{kl}=\dfrac{1}{2}(f_{ij}-w_{il}f_{kl}w_{kj}).
	\end{align*}
	(ii) is obvious:
	\begin{align*}
		P_{ijkl}=\dfrac{1}{2}(\delta_{ik}\delta_{jl}-w_{kj}w_{il})=\dfrac{1}{2}(\delta_{ki}\delta_{lj}-w_{il}w_{kj})=P_{klij}.
	\end{align*}
	
	For (iii), we have
	\begin{align*}
		P_{ijkl}P_{klrs}&=\dfrac{1}{4}(\delta_{ik}\delta_{jl}-w_{kj}w_{il})(\delta_{kr}\delta_{ls}-w_{rl}w_{ks})\\
		&=\dfrac{1}{4}(\delta_{ir}\delta_{js}-w_{rj}w_{is}-w_{rj}w_{is}+w_{kj}w_{il}w_{rl}w_{ks}).
	\end{align*}
	Because $\mb W\in O(n)$, we have
	\begin{align*}
		w_{il}w_{rl}=\delta_{ir}, w_{kj}w_{ks}=\delta_{js}.
	\end{align*}
	Thus
	\begin{align*}
		P_{ijkl}P_{klrs}=\dfrac{1}{2}(\delta_{ir}\delta_{js}-w_{rj}w_{is})=P_{ijrs}.
	\end{align*}
\end{proof}
\begin{remark}
	We define $P_{ijkl}$ as in \eqref{def:tensorP} instead of \eqref{eq:P1}, because in this case, even though $\mb W\notin O(n)$, the following equality still holds:
	\begin{align} \label{eq:wP}
		w_{ij}P_{tjkl}+w_{tj}P_{ijkl}=0.
	\end{align}
	This is helpful in proving invariance of Stiefel manifold, i.e. Lemma \ref{prop:orthogonality}.
\end{remark}
Now we are ready to check that the Stiefel manifold is invariant for \eqref{eq:diffusionapprox}.

\begin{lemma} \label{prop:orthogonality}
	Consider \eqref{eq:diffusionapprox}. If $\mb{W_0}\in O(n)$, then $\mb W(t)\in O(n)$ holds for all $t>0$.
\end{lemma}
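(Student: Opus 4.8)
The plan is to control the single quantity $\mb Y(t):=\mb W(t)\mb W(t)^T-\mb I_n$, which measures the failure of orthogonality, and to show that along any solution of \eqref{eq:diffusionapprox} it satisfies a closed, \emph{deterministic} ODE with $\mb Y(0)=\mb 0$, so that $\mb Y\equiv\mb 0$. Since \eqref{eq:diffusionapprox} is a Stratonovich SDE, the ordinary product rule of calculus applies, and for any solution on $[0,\tau)$ (with $\tau$ its explosion time)
\begin{align*}
\dd(\mb W\mb W^T)=(\dd\mb W)\mb W^T+\mb W(\dd\mb W)^T .
\end{align*}
It therefore suffices to evaluate the three contributions coming from the drift $\mb G(\mb A,\mb W)$, the drift $\eta\,\mathcal P_{T_{\mb W}O(n)}\mb F(\mb W)$, and the Brownian term $\sqrt\eta\,\mathcal P_{T_{\mb W}O(n)}\circ\dd\mb Z$.

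First I would dispose of the two projected terms. Writing them componentwise through the tensor $\mb P$ of Lemma \ref{lmm:projection}, the $(i,t)$ entry of $(\mathcal P_{T_{\mb W}O(n)}\mb F)\mb W^T+\mb W(\mathcal P_{T_{\mb W}O(n)}\mb F)^T$ is $(w_{tj}P_{ijmn}+w_{ij}P_{tjmn})f_{mn}$, which vanishes identically by the key identity \eqref{eq:wP} recorded in the remark after Lemma \ref{lmm:projection}; the point is precisely that \eqref{eq:wP} holds for the choice \eqref{def:tensorP} of $\mb P$ \emph{even when} $\mb W\notin O(n)$. The same index identity, with $f_{mn}$ replaced by $H_{mnkl}\circ\dd B_{kl}$, shows that the $(i,t)$ entry of $(\sqrt\eta\,\mathcal P_{T_{\mb W}O(n)}\circ\dd\mb Z)\mb W^T+\mb W(\sqrt\eta\,\mathcal P_{T_{\mb W}O(n)}\circ\dd\mb Z)^T$ is also zero. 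Hence neither the $\eta$-drift nor the Brownian term contributes anything to $\dd(\mb W\mb W^T)$.

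Next I would handle the surviving drift. Using $\mb G(\mb A,\mb W)=\mb A\mb W-\mb W\mb W^T\mb A\mb W+\mb W\mb{\Sigma}(\mb A,\mb W)$, the symmetry of $\mb A$, and the antisymmetry of $\mb{\Sigma}(\mb A,\mb W)$ (which follows from $\mb W^T\mb A\mb W$ being symmetric together with $(\mb E_j\mb M\mb E_k)^T=\mb E_k\mb M^T\mb E_j$), a short computation with $\mb S:=\mb W\mb W^T$ gives
\begin{align*}
\mb G(\mb A,\mb W)\mb W^T+\mb W\mb G(\mb A,\mb W)^T=\mb A\mb S+\mb S\mb A-2\mb S\mb A\mb S .
\end{align*}
Substituting $\mb S=\mb I_n+\mb Y$ and expanding, every term not containing $\mb Y$ cancels, leaving
\begin{align*}
\frac{\dd}{\dd t}\mb Y=-\mb A\mb Y-\mb Y\mb A-2\mb Y\mb A\mb Y ,
\end{align*}
an autonomous ODE with polynomial (hence locally Lipschitz) right-hand side for which $\mb Y\equiv\mb 0$ is an equilibrium. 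Since $\mb Y(0)=\mb 0$, uniqueness forces $\mb Y(t)=\mb 0$ on $[0,\tau)$, i.e. $\mb W(t)\in O(n)$ there; in particular $\|\mb W(t)\|_F^2=\mathrm{tr}(\mb W\mb W^T)=n$ stays bounded, so the solution cannot explode, $\tau=\infty$, and $\mb W(t)\in O(n)$ for all $t>0$.

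I expect the only genuine subtlety to be bookkeeping: applying \eqref{eq:wP} with the index slots lined up correctly, and organizing the argument so it is not circular — one must first take a local (possibly exploding) solution, run the ODE argument for $\mb Y$ on $[0,\tau)$, and only then invoke boundedness of $\|\mb W\|_F$ to conclude $\tau=\infty$. The algebraic cancellations, both in the tensor computation and in the $\mb S=\mb I_n+\mb Y$ expansion, are routine once the antisymmetry of $\mb{\Sigma}$ and the identity \eqref{eq:wP} are available.
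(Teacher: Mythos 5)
Your proof is correct and follows the same skeleton as the paper's: apply the Stratonovich product rule to $\mb W\mb W^T$, use the identity \eqref{eq:wP} (valid off $O(n)$) to kill both the $\eta$-drift and the Brownian contribution, and observe that $\mb W\mb W^T$ then satisfies the deterministic Riccati-type ODE $\dot{\mb X}=\mb A\mb X+\mb X\mb A-2\mb X\mb A\mb X$ with $\mb X(0)=\mb I_n$. The two arguments diverge only at the final step: the paper invokes a result of \cite{yan1994global} on the algebraic Riccati equation to conclude $\mb X\equiv\mb I_n$, whereas you substitute $\mb X=\mb I_n+\mb Y$, check that the expansion reduces to $\dot{\mb Y}=-\mb A\mb Y-\mb Y\mb A-2\mb Y\mb A\mb Y$, and deduce $\mb Y\equiv\mb 0$ from local Lipschitz uniqueness of the equilibrium. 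Your finish is more elementary and self-contained, and you also spell out the explosion-time bookkeeping (first argue on $[0,\tau)$, then use boundedness of $\|\mb W\|_F$ to get $\tau=\infty$), a point the paper leaves implicit; the paper's citation is shorter but less transparent. Both are sound.
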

\begin{proof}
	By Lemma \ref{lmm:projection}, we can rewrite \eqref{eq:diffusionapprox} as
	\begin{align*}
		\dd w_{ij} = g_{ij}(\mb A, \mb W)\dd t+\eta P_{ijkl}(\mb W)f_{kl}(\mb W)\dd t+P_{ijkl}(\mb W)H_{klrs}(\mb W)\circ \dd B_{rs}.
	\end{align*}
	Then for $1\leq i,t\leq n$, remember that the SDE is in the Stratonovich sense, we have
	\begin{align*}
		\dd(w_{ij}w_{tj}) &= w_{ij}\circ\dd w_{tj} + w_{tj}\circ\dd w_{ij}\\
		&= (w_{ij}g_{tj}(\mb A,\mb W)+w_{tj}g_{ij}(\mb A,\mb W))\dd t +(w_{ij}P_{tjkl}+w_{tj}P_{ijkl})(f_{kl}\dd t+H_{klrs}\circ \dd B_{rs}).
	\end{align*}
	Notice that
	\begin{align*}
		w_{ij}P_{tjkl}+w_{tj}P_{ijkl} &= \dfrac{1}{2}[w_{ij}(w_{ts}w_{ks}\delta_{jl}-w_{kj}w_{tl})+w_{tj}(w_{is}w_{ks}\delta_{jl}-w_{kj}w_{il})]\\
		&= \dfrac{1}{2}(w_{il}w_{ts}w_{ks}-w_{ij}w_{kj}w_{tl}+w_{is}w_{ks}w_{tl}-w_{il}w_{tj}w_{kj})=0.
	\end{align*}
	Thus
	\begin{align*}
		\dd(w_{ij}w_{tj}) = (w_{ij}g_{tj}(\mb A,\mb W)+w_{tj}g_{ij}(\mb A,\mb W))\dd t,
	\end{align*}
	which is an ODE system. We can rewrite this in matrices as
	\begin{align*}
		\dfrac{\dd (\mb W\mb W^T)}{\dd t} &= \mb W\mb{G}^T(\mb A,\mb W) + \mb G(\mb A, \mb W)\mb W^T\\
		&= \mb W\mb W^T\mb A + \mb A\mb W\mb W^T - 2\mb W\mb W^T\mb A\mb W\mb W^T.
	\end{align*}
	Thus $\mb X=\mb W\mb W^T$ satisfies the algebraic Riccati equation:
	\begin{align*}
		\dfrac{\dd \mb X}{\dd t} = \mb A\mb X+\mb X\mb A-2\mb X\mb A\mb X,
	\end{align*}
	with initial value $\mb X=\mb{I_n}$. Thus by results in \cite{yan1994global}, we know that $\mb X(t)=\mb{I_n}$ holds. Therefore, $\mb W(t)\in O(n)$.
\end{proof}

\subsubsection{Fokker-Planck equation and Kolmogorov equation}
In the It\^o sense, \eqref{eq:diffusionapprox} reads as
\begin{align} \label{eq:diffIto}
\dot{\mb W}=\mb{G}(\mb{A},\mb{W})+\eta\mathcal{P}_{T_{\mb W}O(n)}\mb{F}(\mb{W})+\dfrac{\eta}{2}\mb{J}(\mb{W}) + \sqrt\eta\mathcal{P}_{T_{\mb W}O(n)}\dot{\mb Y}(\mb W),
\end{align}
Here $\mb Y$ is defined as
\begin{align}
	\dot{\mb Y}(\mb W)=(\dot{Y}_{ij})_{n\times n},\ \dot{Y}_{ij}(\mb W)=H_{ijkl}(\mb W)\dot{B}_{kl}.
\end{align}
The correction term $\mb J$ is then given by
\begin{equation}\label{eq:correction}
	\begin{aligned} 
	\mb K&=(K_{ijkl})_{n\times n \times n\times n},\ K_{ijkl}=P_{ijrs}H_{rskl}\\
	\mb J&=(J_{ij})_{n\times n},\ 
	J_{ij}(\mb{W}):=K_{rsml}\dfrac{\p K_{ijml}}{\p w_{rs}}.
	\end{aligned}
\end{equation}

Then \eqref{eq:diffIto} could be rewritten as
\begin{align*}
	\dot{w_{ij}}&=g_{ij}(\mb{A},\mb{W})+\eta P_{ijkl}f_{kl}+\dfrac{\eta}{2}J_{ij} + \sqrt\eta K_{ijkl}\dot{B_{kl}}\\
	&=g_{ij}(\mb{A},\mb{W})+\eta P_{ijkl}f_{kl}+\dfrac{\eta}{2}J_{ij} + \sqrt\eta P_{ijkl}H_{klrs}\dot{B_{rs}}
\end{align*}

The corresponding backward Kolmogorov equation of \eqref{eq:diffIto} is
\begin{align} \label{eq:backward}
	\p_tu(\mb{W},t) &= \mathcal{L}u(\mb{W},t),\ u(\mb{W},0)=\vp(\mb{W}),
\end{align}
where $\mathcal{L}$ is an elliptic operator defined as
\begin{equation}\label{eq:L}
	\begin{aligned} 
	\mathcal{L}u&:= \left(g_{ij}(\mb{A},\mb{W})+\eta P_{ijkl}f_{kl}+\dfrac{\eta}{2}J_{ij}\right)\dfrac{\p u}{\p w_{ij}} + \dfrac{\eta}{2}K_{ijrs}K_{klrs}\dfrac{\p^2 u}{\p w_{ij}w_{kl}}\\
	&=\left(g_{ij}(\mb{A},\mb{W})+\eta P_{ijkl}f_{kl}+\dfrac{\eta}{2}J_{ij}\right)\dfrac{\p u}{\p w_{ij}} +\dfrac{\eta}{2}P_{ijzv}H_{zvrs}P_{klxy}H_{xyrs}\dfrac{\p^2 u}{\p w_{ij}w_{kl}}.
	\end{aligned}
\end{equation}

The solution of \eqref{eq:backward} is given by
\begin{align} \label{eq:solution}
	u(\mb{W},t)=e^{t\mathcal{L}}\vp(\mb{W})=\E_{\mb{W}}[\vp(\mb{X}(t))],
\end{align}
here $\mb{X}(t)\in O(n)$ is the random process (before vectorization) determined by \eqref{eq:diffusionapprox} (or equivalently \eqref{eq:diffIto}) with starting point $\mb{X}(0)=\mb{W}\in O(n)$. For more details, we refer readers to \cite{oksendal2013stochastic}.

On the other hand, the solution of the forward Kolmogorov equation (the Fokker-Planck equation) is denoted as $\rho({\mb{W}},t)=e^{t\mathcal{L}^*}\rho_0(\mb{W})$:
\begin{align} \label{eq:forward}
	\p_t\rho = \mathcal{L}^*\rho,\ \rho({\mb{W}},t)=\rho_0({\mb{W}}),
\end{align}
where $\rho_0$ is the initial distribution of $\mb{W}$ and $\mathcal{L}^*$ is defined as
\begin{equation}\label{eq:L^*}
	\begin{aligned} 
	\mathcal{L}^*\rho&:=-\dfrac{\p}{\p w_{ij}}\left[\left(g_{ij}(\mb{A},\mb{W})+\eta P_{ijkl}f_{kl}+\dfrac{\eta}{2}J_{ij}\right) \rho\right]+ \dfrac{\eta}{2}\dfrac{\p^2 }{\p w_{ij}w_{kl}}\left(\rho K_{ijrs}K_{klrs}\right)\\
	&=-\dfrac{\p}{\p w_{ij}}\left[\left(g_{ij}(\mb{A},\mb{W})+\eta P_{ijkl}f_{kl}+\dfrac{\eta}{2}J_{ij}\right) \rho\right]+ \dfrac{\eta}{2}\dfrac{\p^2 }{\p w_{ij}w_{kl}}\left(\rho P_{ijzv}H_{zvrs}P_{klxy}H_{xyrs}\right).
	\end{aligned}
\end{equation}

The solution $\rho(\mb{W},t)=e^{t\mathcal{L}^*}\rho_0(\mb{W})$ is interpreted as the probability distribution of the random process $\mb{X}$ in \eqref{eq:diffusionapprox} with the initial distribution $\rho_0$.

Notice that $\{e^{t\mathcal{L}}\}_{t\geq 0}$ and $\{e^{t\mathcal{L}^*}\}_{t\geq 0}$ form two semigroups, we have the following basic properties:
\begin{lemma} \label{cor:contraction}
	Consider $\{e^{t\mathcal{L}}\}_{t\geq 0}$ in \eqref{eq:L} and $\{e^{t\mathcal{L}^*}\}_{t\geq 0}$ in \eqref{eq:L^*}. Then:
	\begin{enumerate}
		\item (probability and positivity preserving) if $\rho_0\in L^1(O(n))$ and $\rho_0\geq 0$, then $e^{t\mathcal{L}^*}\rho_0\geq 0$ and 
		\begin{align}
			\|e^{t\mathcal{L}^*}\rho\|_{L^1(O(n))}=\|\rho_0\|_{L^1(O(n))}.
		\end{align}
		In particular, $e^{t\mathcal{L}^*}$ is a contraction from $L^1(O(n))$ to $L^1(O(n))$.
		\item ($L^\infty$ contraction) Suppose that $\varphi:O(n)\to R$ is a continuous function. $e^{t\mathcal{L}}:L^{\infty}(O(n))\to L^{\infty}(O(n))$ is a contraction, i.e.
		\begin{align} \label{eq:contraction}
		\|e^{t\mathcal{L}}\vp\|_{L^{\infty}(O(n))}\leq \|\vp\|_{L^{\infty}(O(n))}.
		\end{align}   
	\end{enumerate}
\end{lemma}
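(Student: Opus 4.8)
The plan is to establish both statements by combining the probabilistic representations \eqref{eq:solution} and the conservation structure of $\mathcal{L}^*$ with the compactness of $O(n)$. For part (1), I would start from the observation that $e^{t\mathcal{L}^*}$ is the transition semigroup of the diffusion $\mb{X}(t)$ of \eqref{eq:diffusionapprox}, which by Lemma \ref{prop:orthogonality} stays on the compact manifold $O(n)$ for all $t>0$. Mass conservation $\|e^{t\mathcal{L}^*}\rho_0\|_{L^1(O(n))}=\|\rho_0\|_{L^1(O(n))}$ then follows by testing the Fokker-Planck equation \eqref{eq:forward} against the constant function $1$ on $O(n)$: every term in $\mathcal{L}^*\rho$ as written in \eqref{eq:L^*} is a total derivative $\partial/\partial w_{ij}(\cdots)$ of a smooth expression, so integrating over the closed manifold $O(n)$ (no boundary) kills all of it and gives $\tfrac{d}{dt}\int_{O(n)}\rho\,\dd\mb{W}=0$. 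One technical point to handle carefully is that \eqref{eq:L^*} is written in the ambient coordinates $w_{ij}$, so I would either push everything to intrinsic coordinates on $O(n)$ (where the divergence-form structure is manifest and Stokes' theorem applies with no boundary term), or equivalently use the dual identity $\int_{O(n)}(\mathcal{L}u)\,\rho\,\dd\mb{W}=\int_{O(n)}u\,(\mathcal{L}^*\rho)\,\dd\mb{W}$ with $u\equiv 1$, noting $\mathcal{L}1=0$ directly from \eqref{eq:L}.

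For positivity preservation, I would invoke the probabilistic representation: writing $\rho(\mb{W},t)=e^{t\mathcal{L}^*}\rho_0(\mb{W})$ as the law at time $t$ of $\mb{X}(t)$ with $\mb{X}(0)\sim\rho_0$, we have for any nonnegative test function $\psi$ that $\int_{O(n)}\psi\,\rho(\cdot,t)\,\dd\mb{W}=\E[\psi(\mb{X}(t))]\ge 0$ whenever $\rho_0\ge 0$ (equivalently, $\int\psi\rho(\cdot,t) = \int (e^{t\mathcal{L}}\psi)\rho_0 \ge 0$ using part (2) and $e^{t\mathcal{L}}\psi\ge 0$ for $\psi\ge 0$, which is the maximum principle for the parabolic equation \eqref{eq:backward}). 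Since $\psi\ge 0$ is arbitrary, $\rho(\cdot,t)\ge 0$ a.e. Combining nonnegativity with mass conservation gives the $L^1$ contraction (in fact $L^1$ isometry on the nonnegative cone, hence contraction on all of $L^1$ by splitting $\rho_0=\rho_0^+-\rho_0^-$).

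For part (2), the $L^\infty$ contraction of $e^{t\mathcal{L}}$ is the exact analogue of the discrete statement Theorem \ref{thm:semigroup}(1): from the representation $u(\mb{W},t)=e^{t\mathcal{L}}\vp(\mb{W})=\E_{\mb{W}}[\vp(\mb{X}(t))]$ with $\mb{X}(t)\in O(n)$, we immediately get $|u(\mb{W},t)|\le \E_{\mb{W}}[|\vp(\mb{X}(t))|]\le \|\vp\|_{L^\infty(O(n))}$, and taking the supremum over $\mb{W}\in O(n)$ yields \eqref{eq:contraction}. The only subtlety is justifying that $\mathcal{L}$ as written in \eqref{eq:L} really is the generator of a well-posed diffusion on $O(n)$ and that \eqref{eq:solution} holds — this is standard elliptic theory on a compact manifold (the operator is the sum of a smooth vector field and a nonnegative-definite second-order part $\tfrac{\eta}{2}K_{ijrs}K_{klrs}\partial^2_{w_{ij}w_{kl}}$, and by Lemma \ref{prop:orthogonality} the associated flow never leaves $O(n)$), so I would cite \cite{oksendal2013stochastic} (or \cite{hsu2002stochastic}) and the Feynman–Kac/Kolmogorov correspondence. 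The main obstacle, such as it is, is purely bookkeeping: making the divergence-form argument for mass conservation rigorous in the ambient $w_{ij}$-coordinates given that $\mathcal{L}^*$ is only defined on the submanifold. The cleanest route is to avoid that entirely and derive everything — conservation, positivity, and $L^\infty$ contraction — from the probabilistic picture together with $\mathcal{L}1=0$, which needs no coordinate gymnastics.
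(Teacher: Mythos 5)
Your part (2) argument is exactly the paper's proof: invoke the probabilistic representation \eqref{eq:solution}, use Lemma~\ref{prop:orthogonality} to keep $\mb W(t)$ on $O(n)$, and bound $|e^{t\mathcal{L}}\vp(\mb W_0)|\le \E_{\mb W_0}|\vp(\mb W(t))|\le\|\vp\|_{L^\infty(O(n))}$. For part (1), the paper simply cites \cite{feng2018semigroups} and does not supply a proof, whereas you sketch one; your sketch is sound, and the cleanest version of it is the one you name last: observe $\mathcal{L}1=0$ directly from \eqref{eq:L}, use the duality $\int_{O(n)}(\mathcal{L}u)\rho\,\dd\mb W=\int_{O(n)}u\,(\mathcal{L}^*\rho)\,\dd\mb W$ with $u\equiv 1$ for mass conservation, and get positivity from $\int\psi\,\rho(\cdot,t)=\int(e^{t\mathcal{L}}\psi)\,\rho_0\ge 0$ since $e^{t\mathcal{L}}\psi=\E[\psi(\mb X(t))]\ge 0$ for $\psi\ge 0$. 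You are right to flag the ambient-coordinate issue for the naive ``integrate the divergence form'' route, and right that routing everything through $\mathcal{L}1=0$ and the probabilistic picture avoids it entirely. In short: part (2) coincides with the paper; part (1) fills in an argument the paper omits by citation, and the argument is correct.
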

\begin{proof}
	The proof of (1) can be found in \cite{feng2018semigroups}, here we just prove (2). Because $\vp$ is continuous and $O(n)$ is compact, we have $\varphi\in L^{\infty}(O(n))$.  Suppose $\mb{W_0}\in O(n)$, let $\mb{W}(t)$ be the stochastic process generated by \eqref{eq:diffusionapprox} starting from $\mb{W_0}$. Then we have 
	\begin{align}
	e^{t\mathcal{L}}\vp(\mb{W_0})=\E_{\mb{W_0}}\left[\vp(\mb{W}(t))\right].
	\end{align}
	By Lemma \ref{prop:orthogonality}, we know that $\mb{W}(t)\in O(n)$ for all $t\geq 0$, thus
	\begin{align}
	|e^{t\mathcal{L}}\vp(\mb{W_0})|\leq \E_{\mb{W_0}}|\vp(\mb{W}(t))|\leq\|\vp\|_{L^{\infty}(O(n))}. 
	\end{align}
	This holds for any $\mb{W_0}\in O(n)$, thus \eqref{eq:contraction} holds.
\end{proof}

More discussion on contraction and positivity preserving is available in \cite{soize1994fokker}.

Now we are ready to verify that \eqref{eq:diffusionapprox} serves as a weak diffusion approximation of the SGA iteration.

\subsubsection{First-order diffusion approximations} The method to validate that \eqref{eq:diffusionapprox} serves as a diffusion approximation originates from the idea of the Lax equivalence theorem \cite{lax1956survey}, which was first adopted in \cite{feng2018semigroups} for the same purpose.
\begin{theorem} \label{thm:diffusionapprox}
	(first-order diffusion approximation) Fix time $T>0$. Consider $\{u^k(\mb{W})\}_{k\geq 0}$ defined in \eqref{def:u}. Suppose that $\vp\in C_b^4(\R^{n\times n})$ and $u(\mb{W},t)$ is the solution to \eqref{eq:backward} with the initial value $u(\cdot,0)=\vp(\cdot)$. Then there exist $\eta_1(M,T)>0$ and $C_1(M,T,\vp)>0$ such that for any $\eta\in(0,\ \eta_1(M,T)]$,
	\begin{align}
	\sup\limits_{\mb{W}\in O(n),k\eta\leq T}|u^k(\mb{W})-u(\mb{W},k\eta)|\leq C_1(M,T,\vp)\eta.
	\end{align} 
Here $M$ is the constant in \eqref{ass:L_inf}.
\end{theorem}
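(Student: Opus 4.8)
The plan is to run the Lax-equivalence argument sketched in the introduction: the stability furnished by Lemma~\ref{lmm:stability} and Theorem~\ref{thm:semigroup}, combined with a \emph{second-order} one-step consistency estimate between the semigroup $S$ and the Kolmogorov flow, forces \emph{first-order} convergence. Write $v^k(\mb W):=u(\mb W,k\eta)$ for the exact solution sampled on the grid, so that $u^0=v^0=\vp$ and $u^k=S^k\vp$. The telescoping identity
\begin{align*}
u^k-v^k=\sum_{j=0}^{k-1}S^{k-1-j}\bigl(Sv^j-v^{j+1}\bigr),
\end{align*}
obtained by resumming $S^{k-j}v^j-S^{k-1-j}v^{j+1}$, reduces the statement to bounding the one-step defect $Sv^j-v^{j+1}$ uniformly in $j$; the outer powers of $S$ are then harmless.

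First I would dispose of the stability half. Evaluating the identity at $\mb W\in O(n)$, so that $\|\mb W\|_F=\sqrt n$, the operator $S^{k-1-j}$ averages its argument along the SGA chain started at $\mb W$, which by Lemma~\ref{lmm:stability} (applied with $r=\sqrt n$) stays almost surely in a fixed ball $B(\mb 0,C(\sqrt n,M,T))$; together with the $L^\infty$-contraction of $S$ from Theorem~\ref{thm:semigroup}(1) this yields
\begin{align*}
\sup_{\mb W\in O(n),\,k\eta\le T}\bigl|u^k(\mb W)-v^k(\mb W)\bigr|\le \frac{T}{\eta}\sup_{j\le T/\eta}\bigl\|Sv^j-v^{j+1}\bigr\|_{L^\infty(B(\mb 0,C(\sqrt n,M,T)))}.
\end{align*}
So it suffices to show $\bigl\|Sv^j-v^{j+1}\bigr\|_{L^\infty(B)}\le C(M,T,\vp)\,\eta^2$, uniformly in $j$ and in $\eta$ small; here I would read $u(\cdot,t)$ as a function on (a neighborhood of $O(n)$ in) $\R^{n\times n}$, which is legitimate since the coefficients of $\mathcal L$ in \eqref{eq:L} are polynomials.

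The consistency bound is the core. Split $Sv^j-v^{j+1}=\bigl(v^j+\eta\mathcal Lv^j-v^{j+1}\bigr)+\bigl(Sv^j-v^j-\eta\mathcal Lv^j\bigr)$. For the first bracket, Taylor expansion in $t$ with $\p_tu=\mathcal Lu$ gives $v^{j+1}=v^j+\eta\mathcal Lv^j+\tfrac{\eta^2}{2}\mathcal L^2u(\cdot,\xi_j)$, so this bracket equals $-\tfrac{\eta^2}{2}\mathcal L^2u(\cdot,\xi_j)=O(\eta^2)$ provided $\sup_{t\le T}\|u(\cdot,t)\|_{C^4}<\infty$ — and since $\mathcal L^2$ is a fourth-order operator this is exactly why $\vp\in C_b^4$ is imposed. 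For the second bracket, expand $Sv^j(\mb W)=\E\,v^j\!\bigl(\mb W+\eta\mb G(\mb x\mb x^T,\mb W)\bigr)$ to second order in $\eta$; linearity of $\mb G$ in its first argument gives $\E\,\mb G(\mb x\mb x^T,\mb W)=\mb G(\mb A,\mb W)$, so the $\eta$-order term is precisely $\eta\,g_{ij}(\mb A,\mb W)\p_{ij}v^j=\eta\mathcal L_0v^j$, where $\mathcal L_0$ is the $\eta$-independent part of $\mathcal L=\mathcal L_0+\eta\mathcal L_1$. Hence the $v^j$ and the $\eta\mathcal L_0v^j$ terms cancel against $v^j+\eta\mathcal Lv^j$ up to the $-\eta^2\mathcal L_1v^j$ correction, and the bracket reduces to $\tfrac{\eta^2}{2}\E[G_{ij}G_{kl}]\p_{ij}\p_{kl}v^j-\eta^2\mathcal L_1v^j$ plus the cubic Taylor remainder, each of which is $O(\eta^2)$ because $\|\mb x\|_2\le M$ bounds all moments of $\mb G$ on $B$ while $\sup_{t\le T}\|u(\cdot,t)\|_{C^3}<\infty$. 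Note these $\eta^2$-terms need not cancel — only be of size $\eta^2$ — which is why the estimate holds for the whole family \eqref{eq:diffusionapprox} irrespective of $\mb F,\mb H$; enforcing cancellation is what a second-order scheme would demand, and that is obstructed by Lemma~\ref{lmm:unstable}.

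Combining the two halves gives $|u^k(\mb W)-v^k(\mb W)|\le k\cdot C\eta^2\le(T/\eta)C\eta^2=C_1(M,T,\vp)\,\eta$ for $\mb W\in O(n)$, $k\eta\le T$, and $\eta_1(M,T)$ is the minimum of $\eta(\sqrt n,M,T)$ from Lemma~\ref{lmm:stability} and the threshold needed for the Taylor expansions. The two places where genuine work hides are: (i) the one-step consistency expansion itself — verifying that $\E\,v^j(\mb W+\eta\mb G)$ and $v^j+\eta\mathcal Lv^j$ agree to the claimed order, a long but mechanical computation (whose details I would relegate to Section~\ref{sec:appendix}), with care that the ambient-coordinate form \eqref{eq:L} of $\mathcal L$ is used consistently at points slightly off $O(n)$; and (ii) the a priori estimate $\sup_{t\le T}\|u(\cdot,t)\|_{C^4(B)}<\infty$, the continuous analogue of Theorem~\ref{thm:semigroup}(2), which I would obtain by differentiating \eqref{eq:backward}, using smoothness of the polynomial coefficients of $\mathcal L$, and a Gr\"onwall argument. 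I expect (ii) — the regularity of the degenerate Kolmogorov flow attached to the constrained diffusion — to be the main technical obstacle.
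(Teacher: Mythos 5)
Your telescoping identity is a valid Lax-style decomposition, but it is the \emph{dual} of the one the paper actually uses, and the duality matters: the direction you chose requires a regularity estimate that the paper does not establish and which is genuinely problematic, whereas the paper's direction sidesteps it.

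Concretely, you write $u^k-v^k=\sum_{j}S^{k-1-j}(Sv^j-v^{j+1})$ and propagate the local error through powers of the \emph{discrete} operator $S$, starting from $O(n)$ and using Lemma~\ref{lmm:stability} to keep the iterates in a fixed ball $B$. The price is that the one-step defect $Sv^j-v^{j+1}$ must be controlled in $L^\infty(B)$, so $v^j=u(\cdot,j\eta)$ must be defined and $C^4$-bounded on the whole ball $B$ — off the manifold. You flag this as ``the main technical obstacle,'' and you are right to: the Kolmogorov equation \eqref{eq:backward} is degenerate parabolic in the ambient space $\R^{n\times n}$, with diffusion matrix $K_{ijrs}K_{klrs}$ vanishing on the normal bundle of $O(n)$ (this is exactly the pathology the paper singles out in its discussion after Lemma~\ref{lmm:unstable}). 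Extending $u(\cdot,t)$ off $O(n)$ and proving $\sup_{t\le T}\|u(\cdot,t)\|_{C^4(B)}<\infty$ is not a ``long but mechanical'' Gr\"onwall argument; it is a genuine open point, and ``the coefficients are polynomials'' is not enough to dispose of it.

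The paper's proof instead writes $e_{k+1}\le\|u^{k+1}-e^{\eta\mathcal L}u^k\|_{L^\infty(O(n))}+\|e^{\eta\mathcal L}(u(\cdot,k\eta)-u^k)\|_{L^\infty(O(n))}$ and propagates the error through the \emph{continuous} semigroup $e^{\eta\mathcal L}$. Because the diffusion \eqref{eq:diffusionapprox} is $O(n)$-invariant (Lemma~\ref{prop:orthogonality}), $e^{\eta\mathcal L}$ is an $L^\infty(O(n))$-contraction (Lemma~\ref{cor:contraction}), so the exact solution $u(\cdot,t)$ only ever needs to be evaluated \emph{on} $O(n)$ — no off-manifold regularity is required. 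The local truncation error is then measured with the discrete iterate $u^k$ in the role your $v^j$ plays, and the needed $C^4$ bound for $u^k$ on a slightly enlarged ball is precisely what Theorem~\ref{thm:semigroup}(2) supplies. In short: you correctly identified the stability--consistency skeleton and your one-step consistency computation agrees with the paper's appendix, but the discrete and continuous roles should be swapped in the error-propagation step. With your current choice, step (ii) of your proposal is not a finishing touch but an unresolved gap that the paper's proof is designed to avoid.
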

\begin{proof}
	In the following proof, $C$ is a general constant that varies among equations.
	
	By Theorem \ref{thm:semigroup}, there exist constants $\eta(M,T)$ and $C(M,T)$ such that for all $k=0,1,2...,[T/\eta]$ and $\eta\in(0,\ \eta(M,T)]$,
	\begin{align} \label{eq:regularity}
		\|u^k\|_{C^4(B(0,\sqrt{n+1}))}\leq C(M,T)\|\vp\|_{C^4(\R^{n\times n})}.
	\end{align}
	Then by Taylor's expansion w.r.t. $\eta$, we have that for any $\mb{W}\in O(n)$,
	\begin{equation}\label{eq:Taylor1}
		\begin{aligned} 
		\left|u^{k+1}(\mb{W})-u^k(\mb{W})-\eta g_{ij}(\mb A,\mb W)\dfrac{\p u^k}{\p w_{ij}}\right| &\leq C(M,T)\|u^k\|_{C^4(B(0,\sqrt{n+1}))}\eta^2\\
		&\leq C(M,T,\vp)\eta^2.
		\end{aligned}
	\end{equation}
	Details of this computation can be found in Section \ref{sec:appendix}.
	
	By Lemma \ref{cor:contraction}, $e^{t\mathcal{L}}$ is a contraction from $L^{\infty}(O(n))$ to itself. Therefore for any $\mb{W}\in O(n)$, there exists $\eta'\in(0,\eta]$ such that 
	\begin{equation*}
		\begin{aligned}
		|e^{\eta\mathcal{L}}u^k(\mb{W})-u^k(\mb{W})-\eta\mathcal{L}u^k(\mb{W})|&\leq \dfrac{\eta^2}{2}|e^{\eta'\mathcal{L}}\mathcal{L}^2u^k(\mb{W})|\\
		&\leq \dfrac{\eta^2}{2}\|\mathcal{L}^2u^k\|_{L^{\infty}(O(n))}\\
		&\leq C(M,T,\|u^k\|_{C^4(B(0,\sqrt{n+1}))})\eta^2\\
		&\leq C(M,T,\vp)\eta^2.
		\end{aligned}
	\end{equation*}
	Recall the definition of $\mathcal{L}$ in \eqref{eq:L}, we can rewrite the above estimate as
	\begin{align} \label{eq:Taylor2}
		\left|e^{\eta\mathcal{L}}u^k(\mb{W})-u^k(\mb{W})-\eta g_{ij}(\mb A,\mb W)\dfrac{\p u^k}{\p w_{ij}}\right|\leq C(M,T,\vp)\eta^2
	\end{align}
	See Section \ref{sec:appendix} for details of this computation.
	
	Thus by \eqref{eq:Taylor1} and \eqref{eq:Taylor2}, we have
	\begin{align}
		\|u^{k+1}(\mb{W})-e^{\eta\mathcal{L}}u^k(\mb{W})\|_{L^{\infty}(O(n))}\leq C(M,T,\vp)\eta^2.
	\end{align}
	 
	Now for $k=0,1,2...,[T/\eta]$, define
	\begin{align}
		e_k:=\|u^k(\mb{W})-u(\mb{W},k\eta)\|_{L^{\infty}(O(n))},\ r_k:=\|u^{k+1}(\mb{W})-e^{\eta\mathcal{L}}u^k(\mb{W})\|_{L^{\infty}(O(n))}.
	\end{align}
	Thus $r_k\leq C(M,T,\vp)\eta^2$ for $k=1,2,...,[T/\eta]$. Notice that $u(\mb{W},(k+1)\eta)=e^{\eta\mathcal{L}}u(\mb{W},k\eta)$, we have
	\begin{equation} \label{eq:est}
		\begin{aligned}
		e_{k+1} &= \|u^{k+1}(\mb{W})-u(\mb{W},(k+1)\eta)\|_{L^{\infty}(O(n))}\\
		&= \|u^{k+1}(\mb{W})-e^{\eta\mathcal{L}}u(\mb{W},k\eta)\|_{L^{\infty}(O(n))}\\
		&\leq \|u^{k+1}(\mb{W})-e^{\eta\mathcal{L}}u^k(\mb{W})\|_{L^{\infty}(O(n))}+\|e^{\eta\mathcal{L}}(u(\mb{W},k\eta)-u^k(\mb{W}))\|_{L^{\infty}(O(n))}\\
		&\leq r_k+e_k.
		\end{aligned}
	\end{equation}
	The last step in \eqref{eq:est} used that $e^{\eta\mathcal{L}}$ is a contraction on $L^{\infty}(O(n))$. Thus summing up \eqref{eq:est} in $k$ yields
	\begin{align}
		e_k\leq \sum_{l=1}^{k-1}r_l \leq\dfrac{TC''(M,T,\vp)}{\eta}\cdot\eta^2=C_1(M,T,\vp)\eta.
	\end{align} 
	This holds uniformly in $k$, so the claim is proven.
\end{proof}

\subsubsection{Unstable second order approximation}

According to the proof of Theorem \ref{thm:diffusionapprox}, the key step that ensures first order approximation is the following estimate on the truncation error, which is of second order:
\begin{align*}
	\|S\varphi(\mb W)-e^{\eta\mathcal{L}}\varphi(\mb W)\|_{L^{\infty}(O(n))}\leq C\eta^2.
\end{align*}
Here $\mathcal{L}$ is defined in \eqref{eq:L}.

To derive second order approximation, we need to carefully select the drift term $\mb F$ and $\mb H$ in \eqref{eq:diffusionapprox} such that
\begin{align}
	\|S\varphi(\mb W)-e^{\eta\mathcal{L}}\varphi(\mb W)\|_{L^{\infty}(O(n))}\leq C\eta^3.
\end{align}

Direct calculation yields
\begin{equation*}
\begin{aligned}
S\vp(\mb{W}) &= \E\left[\vp(\mb{W}+\eta\mb{G}(\mb{x}\mb{x}^T,\mb{W}))\right]\\
&= \vp(\mb{W}) +\eta g_{ij}(\mb A,\mb{W})\dfrac{\p\vp}{\p w_{ij}}+\dfrac{\eta^2}{2}\E\left[g_{ij}(\mb x\mb x^T,\mb W)g_{kl}(\mb x\mb x^T,\mb W)\right]\dfrac{\p^2 \varphi}{\p w_{ij}\p w_{kl}}+O(\eta^3).
\end{aligned}
\end{equation*}

Meanwhile,
\begin{equation*}
	\begin{aligned}
	e^{\eta\mathcal{L}}\varphi(\mb W) = \varphi(\mb W)+\eta\mathcal{L}\varphi(\mb W) +\dfrac{1}{2}\eta^2\mathcal{L}^2\varphi(\mb W)+O(\eta^3). 
	\end{aligned}
\end{equation*}

By \eqref{eq:L}, we have
\begin{align}
	\begin{aligned}
	\mathcal{L}^2\varphi &=\mathcal{L}\left[g_{ij}(\mb A,\mb W)\dfrac{\p\vp}{\p w_{ij}}+O(\eta)\right]\\
	&=g_{kl}(\mb A,\mb W)\dfrac{\p }{\p w_{kl}}\left(g_{ij}(\mb A,\mb W)\dfrac{\p\vp}{\p w_{ij}}\right)+O(\eta)\\
	&=\left(g_{kl}(\mb A,\mb W)\dfrac{\p g_{ij}(\mb A,\mb W)}{\p w_{kl}}\right)\dfrac{\p\vp}{\p w_{ij}}+g_{ij}(\mb A,\mb W)g_{kl}(\mb A,\mb W)\dfrac{\p^2\vp}{\p w_{ij}\p w_{kl}}+O(\eta).
	\end{aligned}
\end{align}

Comparing the coefficients of $\dfrac{\p^2\vp}{\p w_{ij}\p w_{kl}}$ and $\dfrac{\p \vp}{\p w_{ij}}$, we derive

\begin{equation}\label{eq:second} 
\begin{aligned}
P_{ijkl}f_{kl}&=-\dfrac{1}{2}J_{ij}-\dfrac{1}{2}g_{kl}(\mb A,\mb W)\dfrac{\p g_{ij}(\mb A,\mb W)}{\p w_{kl}},\\
P_{ijuv}H_{uvrs}P_{klxy}H_{xyrs} &= \E[g_{ij}(\mb A-\mb x\mb x^T,\mb W)g_{kl}(\mb A-\mb x\mb x^T,\mb W)].
\end{aligned}
\end{equation}

Here $\mb F=(f_{ij})_{n\times n}$ and $\mb H=(H_{ijkl})_{n\times n\times n\times n}$ are coefficients in \eqref{eq:diffusionapprox}. Details of the derivation of \eqref{eq:second} is summarized in Section \ref{sec:appendix}.

To interpret \eqref{eq:second}, one can see that the R.H.S. of the second equation in \eqref{eq:second} is exactly the covariance tensor of $\mb G(\mb x\mb x^T,\mb W)$. We denote it as
\begin{align} \label{eq:cov}
\mb M(\mb W) = (M_{ijkl}(\mb W))_{n\times n\times n\times n},\ M_{ijkl} = \E\left[g_{ij}(\mb x\mb x^T-\mb A,\mb W)g_{kl}(\mb x\mb x^T-\mb A,\mb W)\right].
\end{align}

Therefore, we desire suitable $\mb H$ such that
\begin{align}\label{eq:good}
	P_{ijuv}H_{uvrs}P_{klxy}H_{xyrs} = M_{ijkl}.
\end{align}
This can be realized if $\mb W\in O(n)$, which is sufficient for our purpose. 

\begin{lemma} \label{lmm:orth}
Consider $\mb M$ in \eqref{eq:cov}. Then there exists a unique $\mb N=(N_{ijkl})_{n\times n\times n\times n}$ that satisfy
\begin{enumerate}[(i)]
	\item (symmetry) $N_{ijkl}=N_{klij}$;
	\item (positive semidefinite) for any $(m_{ij})_{n\times n}$, $m_{ij}N_{ijkl}m_{kl}\geq 0;$
	\item (square root of $\mb M$) $N_{ijrs}N_{klrs}=M_{ijkl}$.
\end{enumerate} 
Moreover, if $\mb W\in O(n)$, then 
\begin{align}
	P_{ijrs}N_{rskl} = N_{ijkl},
\end{align}
i.e. $\mb {PN}=\mb N$.
\end{lemma}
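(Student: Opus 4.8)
The plan is to first construct $\mb N$ purely from linear algebra on the tensor $\mb M$, then verify the extra identity $\mb{PN}=\mb N$ using structural properties of $\mb G$. I would begin by \emph{vectorizing}: identify $\R^{n\times n}$ with $\R^{n^2}$ via the map $(i,j)\mapsto a=(i-1)n+j$, so that $\mb M$ becomes a genuine $n^2\times n^2$ matrix $\widehat{\mb M}=(M_{ab})$. Property (i) says $\widehat{\mb M}$ is symmetric; property (ii) says it is positive semidefinite; both are immediate from the definition \eqref{eq:cov}, since for any matrix $(m_{ij})$, writing $\xi:=m_{ij}g_{ij}(\mb x\mb x^T-\mb A,\mb W)$ (a scalar random variable) we get $m_{ij}M_{ijkl}m_{kl}=\E[\xi^2]\geq 0$, and symmetry is obvious by swapping the two factors. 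Hence the symmetric PSD matrix $\widehat{\mb M}$ admits a \emph{unique} symmetric PSD square root $\widehat{\mb N}$ (by the spectral theorem / functional calculus: diagonalize $\widehat{\mb M}=\mb O\,\mathrm{diag}(\mu_a)\,\mb O^T$ with $\mu_a\geq 0$, set $\widehat{\mb N}=\mb O\,\mathrm{diag}(\sqrt{\mu_a})\,\mb O^T$). Un-vectorizing gives $\mb N=(N_{ijkl})$ satisfying (i) symmetry, (ii) positive semidefiniteness, and (iii) $N_{ijrs}N_{klrs}=M_{ijkl}$, i.e. $\widehat{\mb N}^2=\widehat{\mb M}$; uniqueness among symmetric PSD tensors is exactly uniqueness of the matrix square root.

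For the identity $\mb{PN}=\mb N$ when $\mb W\in O(n)$, the key observation is that $\mb G(\mb\Lambda,\mb W)$ always lies in the tangent space $T_{\mb W}O(n)$ when $\mb W\in O(n)$ — indeed $\mb G(\mb\Lambda,\mb W)=\mb{\Lambda W}-\mb{WW}^T\mb{\Lambda W}+\mb W\mb\Sigma(\mb\Lambda,\mb W)$, the first two terms give $\mathcal P_{T_{\mb W}O(n)}(\mb{\Lambda W})$ (since $\mb W\mb W^T=\mb I_n$ forces $\mb W\mb W^T\mb{\Lambda W}\mb W^T=\mb{\Lambda W}\mb W^T$ after symmetrization... more directly $\tfrac12(\mb{\Lambda W}-\mb W(\mb{\Lambda W})^T\mb W)$ matches), and $\mb W\mb\Sigma$ is $\mb W$ times a skew-symmetric matrix, hence tangent. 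Either way, $\mathcal P_{T_{\mb W}O(n)}\mb G(\mb\Lambda,\mb W)=\mb G(\mb\Lambda,\mb W)$, i.e. $P_{ijrs}g_{rs}(\mb\Lambda,\mb W)=g_{ij}(\mb\Lambda,\mb W)$ for every symmetric $\mb\Lambda$ and in particular for $\mb\Lambda=\mb x\mb x^T-\mb A$. Consequently
\begin{align*}
P_{abr}M_{rc}=P_{ij\,uv}M_{uv\,kl}=P_{ij\,uv}\,\E[g_{uv}(\mb x\mb x^T-\mb A,\mb W)g_{kl}(\mb x\mb x^T-\mb A,\mb W)]=\E[g_{ij}g_{kl}]=M_{ijkl},
\end{align*}
so in vectorized form $\widehat{\mb P}\widehat{\mb M}=\widehat{\mb M}$, and by symmetry of both $\widehat{\mb P}$ (Lemma \ref{lmm:projection}(ii)) and $\widehat{\mb M}$, also $\widehat{\mb M}\widehat{\mb P}=\widehat{\mb M}$; thus $\widehat{\mb P}\widehat{\mb M}\widehat{\mb P}=\widehat{\mb M}$.

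It remains to transfer $\widehat{\mb P}\widehat{\mb M}=\widehat{\mb M}=\widehat{\mb M}\widehat{\mb P}$ to the square root, i.e. to show $\widehat{\mb P}\widehat{\mb N}=\widehat{\mb N}$. The clean way: $\widehat{\mb P}$ is an orthogonal projection (symmetric idempotent, by Lemma \ref{lmm:projection}), and $\widehat{\mb P}\widehat{\mb M}\widehat{\mb P}=\widehat{\mb M}$ together with $\widehat{\mb M}\succeq 0$ forces $\mathrm{range}(\widehat{\mb M})\subseteq\mathrm{range}(\widehat{\mb P})$ — because $(\mb I-\widehat{\mb P})\widehat{\mb M}(\mb I-\widehat{\mb P})=\widehat{\mb M}-\widehat{\mb P}\widehat{\mb M}-\widehat{\mb M}\widehat{\mb P}+\widehat{\mb P}\widehat{\mb M}\widehat{\mb P}=0$, so $\|\widehat{\mb M}^{1/2}(\mb I-\widehat{\mb P})v\|^2=0$ for all $v$, giving $\widehat{\mb N}(\mb I-\widehat{\mb P})=0$, i.e. $\widehat{\mb N}\widehat{\mb P}=\widehat{\mb N}$, and by symmetry $\widehat{\mb P}\widehat{\mb N}=\widehat{\mb N}$. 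Un-vectorizing yields $P_{ijrs}N_{rskl}=N_{ijkl}$. I expect the only mildly delicate point to be the bookkeeping of the four-index-to-two-index correspondence and confirming $\mb G(\mb\Lambda,\mb W)\in T_{\mb W}O(n)$ for symmetric $\mb\Lambda$ and $\mb W\in O(n)$ (equivalently $P_{ijrs}g_{rs}=g_{ij}$); once that structural fact is in hand, the square-root argument is routine spectral theory. The main obstacle is thus conceptual rather than computational: recognizing that the tangency of $\mb G$ is what propagates from $\mb M$ to its square root.
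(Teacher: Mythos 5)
Your proof is correct and follows the paper's outline for the existence/uniqueness of $\mb N$ (vectorize, invoke the spectral theorem for the symmetric PSD matrix $\widehat{\mb M}$) and for the crucial structural fact $P_{ijrs}g_{rs}(\mb\Lambda,\mb W)=g_{ij}(\mb\Lambda,\mb W)$ when $\mb W\in O(n)$, from which $\widehat{\mb P}\widehat{\mb M}=\widehat{\mb M}\widehat{\mb P}=\widehat{\mb M}$ follows. Where you diverge is the final transfer from $\mb M$ to its square root. The paper argues: $\mb P$ and $\mb M$ commute (using idempotence of $\mb P$ to pass from $\mb{PMP}=\mb M$ to $\mb{PM}=\mb{MP}$), hence $\mb P$ commutes with $\mb N=\sqrt{\mb M}$ (a function of $\mb M$), hence $(\mb{PN})^2=\mb{PNNP}=\mb{PMP}=\mb M$, and by uniqueness of the symmetric PSD square root $\mb{PN}=\mb N$. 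You instead compute $(\mb I-\widehat{\mb P})\widehat{\mb M}(\mb I-\widehat{\mb P})=0$, recognize the left side as the Gram matrix $\bigl(\widehat{\mb N}(\mb I-\widehat{\mb P})\bigr)^T\bigl(\widehat{\mb N}(\mb I-\widehat{\mb P})\bigr)$, deduce $\widehat{\mb N}(\mb I-\widehat{\mb P})=0$, and take transposes. Both arguments are valid. Your range-inclusion step is in fact a bit cleaner: it avoids invoking the auxiliary fact that a matrix commuting with $\mb M$ also commutes with $\sqrt{\mb M}$, and it sidesteps the (unstated in the paper) verification that $\mb{PN}$ is itself symmetric PSD, which is needed before uniqueness can be applied. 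The paper's route, on the other hand, makes the commutativity $\mb{PN}=\mb{NP}$ explicit, which can be a useful identity in its own right.
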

See Section \ref{sec:appendix} for the proof of this lemma. In the view of Lemma \ref{lmm:orth}, we also denote $\mb N$ as
\begin{align}\label{eq:squareroot}
	\mb N=\sqrt{\mb M}.
\end{align}

If we take
\begin{align*}
	\mb H=\mb N=\sqrt{\mb M},
\end{align*} 
in \eqref{eq:diffusionapprox}, then 
\begin{align*}
	P_{ijuv}H_{uvrs}P_{klxy}H_{xyrs} = P_{ijuv}N_{uvrs}P_{klxy}N_{xyrs} = N_{ijrs}N_{klrs} = M_{ijkl}
\end{align*}
holds if $\mb W\in O(n)$. Thus \eqref{eq:good} is satisfied. 

Now we take
\begin{align}\label{eq:condition2}
	\mb F=\mb 0, \mb H=\mb N=\sqrt{\mb M}
\end{align} 
in \eqref{eq:diffusionapprox}. By Lemma \ref{lmm:orth}, we know that \eqref{eq:diffusionapprox} stays on $O(n)$, so \eqref{eq:diffusionapprox} could be rewritten as
\begin{equation}\label{eq:secondSDE}
	\begin{aligned}
	\dd w_{ij} &= g_{ij}(\mb A, \mb W)\dd t+\sqrt{\eta}P_{ijkl}(\mb W)N_{klrs}(\mb W)\circ \dd B_{rs},\\
	&= g_{ij}(\mb A, \mb W)\dd t+\sqrt{\eta}N_{ijrs}(\mb W)\circ \dd B_{rs}.
	\end{aligned}
\end{equation}

This SDE seems to serve as the second order approximation for the SGA method: we have $g_{ij}$ in \eqref{eq:SGA} as the drift term; covariance of $g_{ij}$ is also reflected in the Brownian motion. However, because $\mb F$ in \eqref{eq:condition2} does not satisfy \eqref{eq:second}, \eqref{eq:secondSDE} is not a second order approximation, but a first order approximation by Theorem \ref{thm:diffusionapprox}.

Moreover, there is even no solution to the first equation of \eqref{eq:second}. This excludes the possibility of deriving a second order approximation on the Stiefel manifold. 

Even with the last try, we require $\mb H$ satisfy \eqref{eq:second} and just replace $\mathcal{P}_{T_{\mb W}O(n)}\mb F$ in \eqref{eq:diffusionapprox} by
\begin{align} \label{eq:flowL}
	\mb L =(L_{ij})_{n\times n},\ L_{ij}:= -\dfrac{1}{2}J_{ij}-\dfrac{1}{2}g_{kl}(\mb A,\mb W)\dfrac{\p g_{ij}(\mb A,\mb W)}{\p w_{kl}},
\end{align} 
then resulted SDE still does not stay on the Stiefel manifold. Therefore, we have no second order diffusion approximation that stays on the Stiefel manifold.

\begin{lemma}\label{lmm:unstable}
	(unstable second order approximation) Consider \eqref{eq:second}, $\mb L$ in \eqref{eq:flowL}, and $\mb J=(J_{ij})_{n\times n}$ in \eqref{eq:correction}. Then 
	\begin{enumerate}[(i)]
		\item Suppose that $\mb H$ satisfies \eqref{eq:second}. Then there is no $\mb F=(f_{ij})_{n\times n}$ that satisfies
		\begin{align*}
			P_{ijkl}f_{kl}=L_{ij},
		\end{align*} 
		i.e. the first equation of \eqref{eq:second} admits no solution.
		\item Consider the solution $\mb W(t)$ to the following SDE:
		\begin{align}\label{eq:bad}
				\dd w_{ij} = \left(g_{ij}(\mb A, \mb W)+\eta L_{ij}\right)\dd t+\sqrt{\eta}N_{ijrs}(\mb W)\circ \dd B_{rs},\ \mb W(0)=\mb{W_0}\in O(n).
		\end{align}
		Then there is no time interval $[t_0,t_1]$ such that $\mb W(t)\in O(n)$ for $t\in [t_0,t_1]$. Here $\mb N=(N_{ijkl})_{n\times n\times n\times n}$ is the square root of the covariance matrix $\mb M$ (see Lemma \ref{lmm:orth}).
	\end{enumerate}
\end{lemma}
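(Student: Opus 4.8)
The plan is to reduce both parts of the lemma to a single statement: on $O(n)$ the matrix $\mb{L}\mb{W}^T+\mb{W}\mb{L}^T$ is a sum of two covariance‑type (hence positive semidefinite) matrices, and it never vanishes. For part (i), recall that $P_{ijkl}f_{kl}$ is the $(i,j)$ entry of $\mathcal{P}_{T_{\mb{W}}O(n)}\mb{F}$ and that $\mathcal{P}_{T_{\mb{W}}O(n)}$ is an idempotent projection onto $T_{\mb{W}}O(n)$ (Lemma \ref{lmm:projection}); hence $P_{ijkl}f_{kl}=L_{ij}$ is solvable in $\mb{F}$ if and only if $\mb{L}\in T_{\mb{W}}O(n)$, i.e. $\mb{L}\mb{W}^T+\mb{W}\mb{L}^T=\mb{0}$. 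For part (ii), if $\mb{W}(t)\in O(n)$ for all $t\in[t_0,t_1]$, I would recompute $\dd(\mb{W}\mb{W}^T)$ exactly as in the proof of Lemma \ref{prop:orthogonality}: since $\mb{N}=\mb{P}\mb{N}$ on $O(n)$ (Lemma \ref{lmm:orth}) and \eqref{eq:wP} holds, the Stratonovich noise coefficient $w_{ij}N_{tjrs}+w_{tj}N_{ijrs}$ vanishes along the path, so $\mb{X}(t):=\mb{W}(t)\mb{W}(t)^T$ has finite variation on $[t_0,t_1]$; since $\mb{W}\mb{G}^T+\mb{G}\mb{W}^T=\mb{0}$ when $\mb{W}\mb{W}^T=\mb{I}$, its drift there equals $\eta(\mb{L}\mb{W}^T+\mb{W}\mb{L}^T)$; and since $\mb{X}(t)\equiv\mb{I}$ on $[t_0,t_1]$ this drift must vanish. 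So both parts follow once I show $\mb{L}\mb{W}^T+\mb{W}\mb{L}^T\neq\mb{0}$ at every $\mb{W}\in O(n)$.

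To evaluate it, write $\mb{L}=-\tfrac12\mb{J}-\tfrac12\mb{C}$ with $C_{ij}=g_{kl}(\mb{A},\mb{W})\tfrac{\p g_{ij}(\mb{A},\mb{W})}{\p w_{kl}}$, and handle the two pieces separately on $O(n)$. Differentiating the identity $\mb{G}(\mb{A},\mb{W})\mb{W}^T+\mb{W}\mb{G}(\mb{A},\mb{W})^T=\mb{0}$ — valid on $O(n)$ because $\mb{G}(\mb{A},\mb{W})=\mb{W}\mb{\Sigma}(\mb{A},\mb{W})\in T_{\mb{W}}O(n)$ there — along the flow of \eqref{eq:finvODE} through $\mb{W}$ (a curve in $O(n)$ with velocity $\mb{G}(\mb{A},\mb{W})$), and using that $C_{ij}$ is the directional derivative of $g_{ij}(\mb{A},\cdot)$ along $\mb{G}(\mb{A},\mb{W})$, I get $\mb{C}\mb{W}^T+\mb{W}\mb{C}^T=-2\,\mb{G}(\mb{A},\mb{W})\mb{G}(\mb{A},\mb{W})^T$. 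For $\mb{J}$ I would use that \eqref{eq:secondSDE} — which is \eqref{eq:diffusionapprox} with $\mb{F}=\mb{0},\ \mb{H}=\mb{N}$ — stays on $O(n)$ by Lemma \ref{prop:orthogonality}: setting the It\^o drift of $\mb{W}\mb{W}^T$ for \eqref{eq:secondSDE} equal to zero on $O(n)$, and simplifying with $\mb{K}=\mb{P}\mb{N}=\mb{N}$, the square‑root identity $N_{ijrs}N_{klrs}=M_{ijkl}$ and $\sum_j M_{ij,tj}=\E[(\mb{G}_0\mb{G}_0^T)_{it}]$ with $\mb{G}_0:=\mb{G}(\mb{x}\mb{x}^T-\mb{A},\mb{W})$, one obtains $\mb{J}\mb{W}^T+\mb{W}\mb{J}^T=-2\,\E[\mb{G}_0\mb{G}_0^T]$. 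Combining the two, on $O(n)$,
\[
	\mb{L}\mb{W}^T+\mb{W}\mb{L}^T=\mb{G}(\mb{A},\mb{W})\mb{G}(\mb{A},\mb{W})^T+\E\!\big[\mb{G}_0\mb{G}_0^T\big],
\]
a sum of two positive semidefinite matrices.

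It remains to check this never vanishes. The sum is zero if and only if $\mb{G}(\mb{A},\mb{W})=\mb{0}$ and $\mb{G}(\mb{x}\mb{x}^T-\mb{A},\mb{W})=\mb{0}$ almost surely; using $\mb{G}(\mb{\Lambda},\mb{W})=\mb{W}\mb{\Sigma}(\mb{\Lambda},\mb{W})$ on $O(n)$, this means $\mb{W}^T\mb{A}\mb{W}$ and $\mb{W}^T\mb{x}\mb{x}^T\mb{W}$ are both diagonal almost surely. Since $\mb{A}$ has simple eigenvalues, the first forces $\mb{W}$ to be a signed permutation matrix, and then the second forces the law of $\mb{x}$ to be concentrated on $\bigcup_i\R\mb{e}_i$ — a degeneracy one excludes under any mild hypothesis on $\mb{x}$ (for instance if its law has a density). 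Hence $\mb{L}\mb{W}^T+\mb{W}\mb{L}^T\neq\mb{0}$ for every $\mb{W}\in O(n)$; this proves (i), and since then $\mb{X}(t)=\mb{W}(t)\mb{W}(t)^T$ cannot stay equal to $\mb{I}$ on any interval, (ii) follows.

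The step I expect to be the main obstacle is the identity $\mb{J}\mb{W}^T+\mb{W}\mb{J}^T=-2\,\E[\mb{G}_0\mb{G}_0^T]$: the correction $\mb{J}$ is built from $\mb{K}=\mb{P}\mb{N}$ and its first derivatives, and since $\mb{P}\mb{N}$ differs from $\mb{N}$ off $O(n)$ one cannot simply replace $\mb{K}$ by $\mb{N}$ inside the derivatives, so either one routes the computation through the invariance of \eqref{eq:secondSDE} (as above) or carries out a longer direct expansion keeping track of $\p_{w_{rs}}P_{ijkl}$. A secondary point is that part (ii) does need the mild non‑degeneracy of the law of $\mb{x}$ noted above: at the finitely many signed‑permutation matrices both the drift and the noise of \eqref{eq:bad} vanish, so without that hypothesis $\mb{W}(t)$ could sit on $O(n)$ there.
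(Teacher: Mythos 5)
Your proof is correct and, while the headline strategy is the same as the paper's — reduce both parts to showing that $\mb L\mb W^T+\mb W\mb L^T$ (equivalently $w_{ij}L_{tj}+w_{tj}L_{ij}$) is nonzero on $O(n)$, and use \eqref{eq:wP} plus the Stratonovich cancellation of the noise coefficient to get part (ii) from part (i) — you take a genuinely different route to the key identity, and you extract more from it. Where the paper computes the $\mb J$ contribution by a direct index expansion of $J_{ij}=K_{rsml}\p_{w_{rs}}K_{ijml}$ (arriving at $\text{I}=-2K_{tjml}K_{ijml}$), you instead observe that \eqref{eq:secondSDE} stays on $O(n)$ by Lemma \ref{prop:orthogonality}, so the It\^o drift of $\mb W\mb W^T$ there must balance the quadratic-variation correction; this gives $\mb J\mb W^T+\mb W\mb J^T=-2\E[\mb G_0\mb G_0^T]$ with essentially no bookkeeping. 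Similarly, for the $\mb C$ piece you differentiate $\mb G\mb W^T+\mb W\mb G^T=0$ along the flow of \eqref{eq:finvODE} to get $\mb C\mb W^T+\mb W\mb C^T=-2\mb G\mb G^T$, whereas the paper leaves a residual term $g_{kl}\p_{w_{kl}}(g_{tj}w_{ij}+g_{ij}w_{tj})$ in its final expression — a term which, as you correctly note, vanishes identically on $O(n)$ since it is the directional derivative of a function constant on $O(n)$ along a tangent field. The payoff is a cleaner final form, $\mb L\mb W^T+\mb W\mb L^T=\mb G(\mb A,\mb W)\mb G(\mb A,\mb W)^T+\E[\mb G_0\mb G_0^T]$, manifestly positive semidefinite, which replaces the paper's informal argument (``the first term depends on fourth moments while the second only on second moments'') with a pointwise criterion: the sum vanishes only at signed permutation matrices $\mb W$ and only when the law of $\mb x$ is concentrated on the coordinate axes. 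Two small comments: (1) the paper's statement does not spell out this non-degeneracy hypothesis on $\mb x$, so you are right to flag it as needed; it is implicit in the paper's ``for general R.V.\ $\mb x$.'' (2) For part (i), your solvability criterion $\mb L\in T_{\mb W}O(n)$ uses the idempotent-projection form of $\mb P$ valid for $\mb W\in O(n)$; that restriction is harmless since non-solvability on $O(n)$ already forces non-solvability globally, and $O(n)$ is the only region where the approximation is meaningful.
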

\begin{proof}
	We first prove (i) by contradiction. Suppose there is a solution $\mb F$, then by \eqref{eq:wP}, 
	\begin{align*}
		w_{ij}L_{tj}+w_{tj}L_{ij} = w_{ij}P_{tjkl}f_{kl}+w_{tj}P_{ijkl}f_{kl} = 0.
	\end{align*}
	Notice 
	\begin{align*}
		-2(w_{ij}L_{tj}+w_{tj}L_{ij}) = \underbrace{w_{ij}J_{tj}+w_{tj}J_{lj}}_{\text I} + \underbrace{g_{kl}(\mb A, \mb W)\left(w_{ij}\dfrac{\p g_{tj}(\mb A,\mb W)}{\p w_{kl}}+w_{tj}\dfrac{\p g_{ij}(\mb A,\mb W)}{\p w_{kl}}\right)}_{\text{II}}.
	\end{align*}
	By definition of $\mb J,\mb K$ in \eqref{eq:correction}, we have
	\begin{align*}
		\text I = \dfrac{\p}{\p w_{rs}}\left(\left(w_{ij}P_{tjuv}+w_{tj}P_{ijuv}\right)H_{uvml}K_{rsml}\right)-K_{tjml}\dfrac{\p}{\p w_{rs}}(w_{ij}K_{rsml})-K_{ijml}\dfrac{\p}{\p w_{rs}}(w_{tj}K_{rsml}).
	\end{align*}
	By \eqref{eq:wP}, the first term of I is zero, so we have
	\begin{align*}
		\text I &=-K_{tjml}\dfrac{\p}{\p w_{rs}}(w_{ij}K_{rsml})-K_{ijml}\dfrac{\p}{\p w_{rs}}(w_{tj}K_{rsml})\\
		&=-2K_{tjml}K_{ijml} - w_{ij}P_{tjuv}H_{uvml}K_{rsml} - w_{tj}P_{ijuv}H_{uvml}K_{rsml}\\
		&=-2K_{tjml}K_{ijml}\\
		&=-2\E[g_{tj}(\mb A-\mb x\mb x^T,\mb W)g_{ij}(\mb A-\mb x\mb x^T,\mb W)].
	\end{align*}
	For II, we have 
	\begin{align*}
		\text{II} &= \dfrac{\p}{\p w_{kl}}[(w_{ij}g_{tj}(\mb A,\mb W)+w_{tj}g_{ij}(\mb A,\mb W))g_{kl}(\mb A,\mb W)] \\
		&\ \ \ \ - g_{tj}(\mb A,\mb W)\dfrac{\p}{\p w_{kl}}(w_{ij}g_{kl}(\mb A,\mb W)) - g_{ij}(\mb A,\mb W)\dfrac{\p }{\p w_{kl}}(w_{tj}g_{kl}(\mb A,\mb W))\\
		&= -2g_{ij}(\mb A,\mb W)g_{tj}(\mb A,\mb W) + g_{kl}\dfrac{\p}{\p w_{kl}}(g_{tj}w_{ij}+g_{ij}w_{tj}).
	\end{align*}
	Thus
	\begin{align*}
		-2(w_{ij}L_{tj}+w_{tj}L_{ij}) &= \text I+ \text{II}\\
		&= -2\E[g_{tj}(\mb A-\mb x\mb x^T,\mb W)g_{ij}(\mb A-\mb x\mb x^T,\mb W)] -2g_{ij}(\mb A,\mb W)g_{tj}(\mb A,\mb W) \\
		&\ \ \ \ + g_{kl}\dfrac{\p}{\p w_{kl}}(g_{tj}w_{ij}+g_{ij}w_{tj})\\
		&= -2\E[g_{tj}(\mb x\mb x^T,\mb W)g_{ij}(\mb x\mb x^T,\mb W)]+ g_{kl}\dfrac{\p}{\p w_{kl}}(g_{tj}w_{ij}+g_{ij}w_{tj}).
	\end{align*}
	This can not be zero for general R.V. $\mb x$, because the first term depends on the fourth-order momentum while the second term only depends on the second-order momentum. This is a contradiction, so 
	$w_{ij}L_{tj}+w_{tj}L_{ij}\neq 0$ and \eqref{eq:second} admits no solution.
	
	For (ii), we still prove by contradiction. Suppose that for some time interval $[t_0,t_1]$, $\mb W(t)\in O(n)$. Then 
	\begin{align*}
		\dfrac{\dd (\mb W\mb W^T)}{\dd t} =0,\ t\in (t_0,t_1).
	\end{align*}
	By Lemma \ref{lmm:orth}, we know that if $\mb W\in O(n)$, then $P_{ijrs}N_{rskl}=N_{ijkl}$. So we can rewrite \eqref{eq:bad} as
	\begin{align*}
		\dd w_{ij} = \left(g_{ij}(\mb A, \mb W)+\eta L_{ij}\right)\dd t+\sqrt{\eta}P_{ijrs}N_{rskl}\circ \dd B_{kl}
	\end{align*}
	However, 
	\begin{align*}
		\dd (w_{ij}w_{tj}) &= w_{ij}\dd w_{tj} + w_{tj}\dd w_{ij}\\
		&= (w_{ij}g_{tj}(\mb A,\mb W)+w_{tj}g_{ij}(\mb A,\mb W))\dd t + \eta(w_{ij}L_{tj}+w_{tj}L_{ij})\dd t\\&\ \ \ \ + \sqrt{\eta} (w_{ij}P_{tjrs}+w_{tj}P_{ijrs})N_{rskl}\circ B_{kl}.
	\end{align*}
	Remember that if $\mb W\in O(n)$, then $\mb G(\mb A,\mb W)=\mathcal{P}_{T_\mb WO(n)}\mb G(\mb A,\mb W)$. So $g_{ij}=P_{ijkl}g_{kl}$, which yields that for all $t\in (t_0,t_1)$, 
	\begin{align*}
		w_{ij}g_{tj}+ w_{tj}g_{ij} = (w_{ij}P_{tjrs}+w_{tj}P_{ijrs})g_{rs} = 0
	\end{align*}
	by \eqref{eq:wP}. Due to the same reason, 
	\begin{align*}
		\sqrt{\eta} (w_{ij}P_{tjrs}+w_{tj}P_{ijrs})N_{rskl}\circ B_{kl}=0.
	\end{align*}
	Thus
	\begin{align*}
		\dd(w_{ij}w_{tj}) = \eta(w_{ij}L_{tj}+w_{tj}L_{ij})\dd t.
	\end{align*}
	However, $(w_{ij}L_{tj}+w_{tj}L_{ij})\neq 0$ by (i). Thus $\dd (w_{ij}w_{tj})\neq 0$ for $t\in (t_0,t_1)$, which contradicts with
	\begin{align*}
		w_{ij}w_{tj}=\delta_{it}.
	\end{align*}
	This is a contradiction, the proof is finished.
\end{proof}

In the view of Lemma \ref{lmm:unstable}, \eqref{eq:bad} fails to stay on the Stiefel manifold. In this case, \eqref{eq:backward} is a degenerate parabolic PDE with unbounded coefficients, which fails to control the diffusion in the normal direction of the Stiefel manifold. Thus, utilizing solutions of \eqref{eq:backward} to approximate the behavior of the semigroup \eqref{def:semigroup} is meaningless.

\section{Reversible diffusion approximation:  exponential convergence} \label{sec:sde}

In Theorem \ref{thm:diffusionapprox}, we derived diffusion approximations on the Stiefel manifold. A natural question is that whether the SDE is ergodic and converges. 

In fact, reversibility and Poincare's inequality ensure exponential convergence. To see this, the Fokker-Planck operator \eqref{eq:L^*} can be recast as 
\begin{align*}
	\mathcal{L}^*\rho &= \dfrac{\p}{\p w_{ij}}\left[\rho\left(-g_{ij}-\eta P_{ijkl}f_{kl}+\dfrac{\eta}{2}K_{ijrs}K_{klrs}\dfrac{\p\log\rho}{\p w_{kl}}+\dfrac{\eta}{2}J_{ij}\right)\right]\\
	&= \dfrac{\p}{\p w_{ij}}\left[\dfrac{\eta}{2}K_{ijrs}K_{klrs}\rho\left(\dfrac{\eta J_{ij}-2g_{ij}-2\eta P_{ijkl}f_{kl}}{\eta K_{ijrs}K_{klrs}}+\dfrac{\p\log\rho}{\p w_{kl}}\right)\right].
\end{align*}
If there exists a function $U(\mb W)$ such that 
\begin{align} \label{eq:reversibility}
	\dfrac{\p U}{\p w_{kl}} =\dfrac{\eta J_{ij}-2g_{ij}-2\eta P_{ijkl}f_{kl}}{\eta K_{ijrs}K_{klrs}},
\end{align}
then solutions to $\p_t\rho=\mathcal{L}^*\rho$ satisfies
\begin{align*}
\p_t\rho = \dfrac{\p}{\p w_{ij}}\left[\dfrac{\eta}{2}K_{ijrs}K_{klrs}\rho\dfrac{\p(\log\rho+U)}{\p w_{kl}}\right]=\dfrac{\p}{\p w_{ij}}\left[\dfrac{\eta}{2}e^{-U}K_{ijrs}K_{klrs}\dfrac{\p(e^U\rho)}{\p w_{kl}}\right].
\end{align*}
Without loss of generality, we assume that $\int_{O(n)}e^{-U(\mb W)}\dd\mb W=1$. Then 
multiply $e^U\rho-1$ on both sides, and by definition of $\mb K$ in \eqref{eq:correction}, we derive
\begin{align*}
\dfrac{\dd}{\dd t}\int_{O(n)}e^{-U}|e^U\rho-1|^2\dd\mb W &= -\eta\int_{O(n)}e^{-U}\left(K_{ijrs}\dfrac{\p}{\p w_{ij}}(e^U\rho-1)\right)\left(K_{klrs}\dfrac{\p}{\p w_{kl}}(e^U\rho-1)\right)\dd\mb W\\
&= -\eta \int_{O(n)}e^{-U}|\mb H^T\nabla_{\mb W}(e^{U}\rho-1)|^2\dd\mb W.
\end{align*} 
Here $\nabla_{\mb W}$ is the gradient operator on $(O(n),g_e)$ where $g_e$ is the Euclidean metric. Then by Poincare's inequality, we derive exponential convergence. 

Therefore, we desire to carefully select $\mb H$ and $\mb F$ such that the potential condition \eqref{eq:reversibility} is satisfied. We provide the following two special cases where reversibility is ensured.
 
First, we consider the overdamped Langevin on $O(n)$. In particular, we select the potential as
$U(\mb W;\mb A,\mb N)=\mathrm{tr}(\mb N\mb W^T\mb A\mb W)$, where $\mb N$ is a diagonal matrix. Then we recover the Oja-Brockett flow with Bronwian motion. In fact, the Oja-Brockett flow is the gradient flow of $U(\mb W;\mb A,\mb N)=\mathrm{tr}(\mb N\mb W^T\mb A\mb W)$ on $(O(n),g_e)$.

Second, we consider the two-dimensional case, i.e. $n=2$. In this case, orthogonal matrices are determined by the rotational angle. The SDE of the angle is an SDE on $\R$, which automatically satisfy the potential condition.

In each case, Poincare's inequality is verified, so they converge to the invariant measure exponentially fast.  
\subsection{The overdamped Langevin dynamics on $O(n)$}\label{sec:OLE}
Suppose that $U(\mb{Q}):O(n)\to\R$ is a smooth function (which serves as the free energy), consider the overdamped Langevin dynamics on $O(n)$:
\begin{align} \label{eq:OLEStiefel}
\dd\mb{Q}(t) = \mathcal{P}_{T_{\mb{Q}}O(n)}\circ(-\nabla U(\mb{Q})\dd t+\sigma\dd\mb{W}(t)).
\end{align} 
Here $\mathcal{P}_{T_{\mb{Q}}O(n)}$ is the projection onto $T_{\mb{Q}}O(n)$, $\nabla$ represents the derivative w.r.t. $\mb{Q}$, i.e.
\begin{align*}
(\nabla U(\mb{Q}))_{ij} = \dfrac{\p u(\mb{Q})}{\p q_{ij}}.
\end{align*}
$\mb{W}(t)\in\R^{n\times n}$ is the standard Brownian motion and $\sigma$ is a constant. '$\circ$' means that the above SDE is in the Stratonovich sense. In general, $\sigma$ can be a matrix that depends on $\mb{Q}$. To illustrate the the Langevin dynamics on $O(n)$, we consider the simplest case here which is sufficient for diffusion approximation.

If we take $U(\mb Q)=-\mathrm{tr}(\mb N\mb{Q}^T\mb{AQ})$ where $\mb N$ is a diagonal matrix with entries on the diagonal line aligned in the descending order, then \eqref{eq:OLEStiefel} reads as
\begin{align} \label{eq:dtbOjaBrockett}
\dd\mb{Q}(t) = (\mb{AQN} -\mb{QNQ}^T\mb{AQ})\dd t +\sigma \mathcal{P}_{T_{\mb{Q}}O(n)}\circ\dd\mb{W}(t).
\end{align} 
Then \eqref{eq:dtbOjaBrockett} should be viewed as the disturbed Oja-Brockett flow \cite{brockett1991dynamical}.
 
By the conversion rule, \eqref{eq:OLEStiefel} should be formulated in Ito's sense as

\begin{align}
\dd\mb{Q}(t) = \left[-\mathcal{P}_{T_\mb{Q}O(n)}\nabla U(\mb{Q})-\dfrac{\sigma^2(n-1)}{4}\mb{Q}\right]\dd t + \sigma\mathcal{P}_{T_\mb{Q}O(n)}\dd\mb{W}(t).
\end{align}
By Ito's formula, we can derive the Fokker-Planck equation of \eqref{eq:OLEStiefel} which is 
\begin{align} \label{eq:FPOLE}
\dfrac{\p\rho(\mb{Q},t)}{\p t} = \nabla_{\mb{Q}}\cdot(\rho(\mb{Q},t)\nabla_{\mb{Q}}U(\mb{Q}))+\dfrac{\sigma^2}{2}\Delta_{\mb{Q}}\rho(\mb{Q},t).
\end{align}
Here $\nabla_{\mb{Q}}\cdot,\nabla_{\mb{Q}}$ and $\Delta_{\mb{Q}}$ are the divergence, gradient and Laplace-Beltrami operator on $(O(n),\ g_e)$ respectively. See Section \ref{sec:appendix} for more details.

Compactness of $O(n)$ implies exponential convergence of \eqref{eq:FPOLE}. Direct calculation yields that the invariant measure of \eqref{eq:FPOLE} is given by
\begin{align} \label{eq:OLEinvmeas}
\rho_{eq}(\mb{Q}):=\dfrac{1}{Z}e^{-2U(\mb{Q})/\sigma^2},\ Z:=\int_{O(n)}e^{-2U(\mb{Q})/\sigma^2}\dd{\mb{V}},
\end{align} 
where $\dd\mb{V}$ is the volume form on $(SO(n),g_e)$. Equation \eqref{eq:FPOLE} can be reformulated as 
\begin{align}
\dfrac{\p\rho(\mb{Q},t)}{\p t} = \dfrac{\sigma^2}{2}\nabla_{\mb{Q}}\cdot\left(\rho_{eq}(\mb{Q})\nabla_{\mb{Q}}\left(\dfrac{\rho(\mb{Q},t)}{\rho_{eq}(\mb{Q})}\right)\right):=\mathcal{L}^*\rho(\mb{Q},t),
\end{align}
and we denote the Fokker-Planck operator as $\mathcal{L}^*$. This also implies that the invariant measure of \eqref{eq:FPOLE} is unique since
\begin{equation*}
\begin{aligned}
\mathcal{L}^*\rho=0 &\iff \nabla_{\mb{Q}}\cdot\left(\rho_{eq}(\mb{Q})\nabla_{\mb{Q}}\left(\dfrac{\rho(\mb{Q},t)}{\rho_{eq}(\mb{Q})}\right)\right) = 0\\
&\iff \int_{O(n)}\rho_{eq}(\mb{Q})\left|\nabla_{\mb{Q}}\left(\dfrac{\rho(\mb{Q},t)}{\rho_{eq}(\mb{Q})}\right)\right|^2\dd{V}=0\\
&\iff \rho(\mb{Q})=c\rho_{eq}(\mb{Q}),
\end{aligned}
\end{equation*}
where $c$ is a constant. If $\rho(\mb{Q})$ is a probability measure on $SO(n)$, then $c=1$ and $\rho=\rho_{eq}$. In the above induction we used the positivity of $\rho_{eq}(\mb{Q})$, which is a consequence of the continuity of $U(\mb{Q})$ and the compactness of $SO(n)$.

Multiplying $\rho(\mb{Q},t)/\rho_{eq}(\mb{Q})-1$ on both sides of \eqref{eq:FPOLE} results in
\begin{align} \label{eq:estimate}
\dfrac{\dd }{\dd t}\int_{O(n)}\rho_{eq}(\mb{Q})\left|\dfrac{\rho(\mb{Q},t)-\rho_{eq}(\mb{Q})}{\rho_{eq}(\mb{Q})}\right|^2\dd\mb{V} = -{\sigma^2}\int_{O(n)}\rho_{eq}(\mb{Q})\left|\nabla_{\mb{Q}}\left(\dfrac{\rho(\mb{Q},t)-\rho_{eq}(\mb{Q})}{\rho_{eq}(\mb{Q})}\right)\right|^2\dd\mb{V}.
\end{align}
If we can prove the Poincare inequality in $L^2(\rho_{eq}(\mb{Q})\dd\mb{V})$, i.e., there exists a constant $C>0$ such that for all $f\in H^1(\rho_{eq}(\mb{Q})\dd\mb{V})$ satisfying $\int_{SO(n)}f(\mb{Q})\rho_{eq}(\mb{Q})\dd\mb{V}=0$, we have
\begin{align} \label{eq:Poincare2}
\int_{O(n)}\rho_{eq}(\mb{Q})\left|{\nabla_{\mb{Q}}\left(f(\mb{Q})\right)}\right|^2\dd\mb{V}\geq C\int_{O(n)}\rho_{eq}(\mb{Q})|f(\mb{Q})|^2\dd\mb{V},
\end{align}
then \eqref{eq:estimate} yields
\begin{align*}
\dfrac{\dd }{\dd t}\int_{O(n)}\rho_{eq}(\mb{Q})\left|\dfrac{\rho(\mb{Q},t)-\rho_{eq}(\mb{Q})}{\rho_{eq}(\mb{Q})}\right|^2\dd\mb{V} \leq -C\int_{O(n)}\rho_{eq}(\mb{Q})\left|\dfrac{\rho(\mb{Q},t)-\rho_{eq}(\mb{Q})}{\rho_{eq}(\mb{Q})}\right|^2\dd\mb{V}.
\end{align*}
By Gronwall's inequality, this gives exponential convergence of \eqref{eq:FPOLE} whose initial value is a probability measure. 

Now we prove the Poincare inequality in $L^2(\rho_{eq}\dd\mb{V})$.
\begin{lemma}(Poincare's inequality) Suppose that $f\in H^1(\rho_{eq}(\mb{Q})\dd\mb{V})$ satisfying $\int_{O(n)}f(\mb{Q})\rho_{eq}(\mb{Q})\dd\mb{V}=0$. Then \eqref{eq:Poincare2} holds.
\end{lemma}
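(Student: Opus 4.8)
The plan is to reduce the weighted inequality \eqref{eq:Poincare2} to the classical (unweighted) Poincar\'e inequality on the compact Riemannian manifold $(SO(n),g_e)$ and then transfer it across the bounded weight $\rho_{eq}$. First note that $U(\mb Q)=-\mathrm{tr}(\mb N\mb Q^T\mb A\mb Q)$ is a polynomial in the entries of $\mb Q$, so on the compact set $SO(n)$ it is bounded; hence there are constants $0<a\leq b<\infty$ with $a\leq\rho_{eq}(\mb Q)\leq b$ for every $\mb Q$. One subtlety must be dispatched at the outset: a Poincar\'e inequality with a single mean-zero constraint is false on a disconnected manifold (a function locally constant on each sheet has vanishing gradient but need not vanish), so the statement must be read on a connected component, say $SO(n)$; since the diffusion \eqref{eq:OLEStiefel} has continuous sample paths it never leaves the component of its starting point, so nothing is lost.

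Second, I would establish the unweighted Poincar\'e inequality on $(SO(n),g_e)$: there is a constant $C_0>0$ with
\begin{align*}
\int_{SO(n)}|\nabla_{\mb Q}g|^2\,\dd\mb V\ \geq\ C_0\int_{SO(n)}\big|g-\langle g\rangle\big|^2\,\dd\mb V
\end{align*}
for all $g\in H^1(SO(n))$, where $\langle g\rangle$ is the average of $g$ against the normalized volume form. This is the spectral gap of the Laplace--Beltrami operator on a compact connected manifold ($0=\lambda_0<\lambda_1\leq\cdots$, with $\lambda_1>0$ precisely because $SO(n)$ is connected), and may alternatively be proved directly: by the Rellich--Kondrachov theorem $H^1(SO(n))\hookrightarrow L^2(SO(n))$ compactly, so if the inequality failed a normalized minimizing sequence $g_k$ with $\langle g_k\rangle=0$, $\|g_k\|_{L^2(\dd\mb V)}=1$, $\|\nabla_{\mb Q}g_k\|_{L^2(\dd\mb V)}\to 0$ would have an $L^2$-convergent subsequence whose limit $g$ satisfies $\nabla_{\mb Q}g=0$, hence $g$ is constant, has zero mean, hence $g=0$, contradicting $\|g\|_{L^2}=1$.

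Third, I would transfer this to the weighted measure $\rho_{eq}\,\dd\mb V$. Given $f$ with $\int_{SO(n)}f\rho_{eq}\,\dd\mb V=0$, let $\bar f=\langle f\rangle$ be its volume average. Since the weighted variance $\int |f-c|^2\rho_{eq}\,\dd\mb V$ is minimized at the weighted mean (here $0$),
\begin{align*}
\int_{SO(n)}f^2\rho_{eq}\,\dd\mb V\ \leq\ \int_{SO(n)}|f-\bar f|^2\rho_{eq}\,\dd\mb V\ \leq\ b\int_{SO(n)}|f-\bar f|^2\,\dd\mb V\ \leq\ \frac{b}{C_0}\int_{SO(n)}|\nabla_{\mb Q}f|^2\,\dd\mb V\ \leq\ \frac{b}{aC_0}\int_{SO(n)}|\nabla_{\mb Q}f|^2\rho_{eq}\,\dd\mb V,
\end{align*}
which is \eqref{eq:Poincare2} with $C=aC_0/b$.

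Essentially every ingredient here is classical, so there is no deep obstacle; the only genuine care points are the two bookkeeping issues already flagged: (i) one must pass from $O(n)$ to the connected component $SO(n)$, since on the full group the inequality is simply false, and (ii) the hypothesis normalizes $f$ against $\rho_{eq}\,\dd\mb V$ whereas the unweighted gap is naturally stated with the volume mean, so the comparison in the third step must invoke the variational (minimization) characterization of the mean rather than substituting means directly.
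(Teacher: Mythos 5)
Your proof is correct but follows a genuinely different route from the paper's. The paper works directly with the weighted Fokker--Planck operator $\mathcal{L}^*$ on $L^2(\dd\mb V/\rho_{eq})$: it shows the resolvent $(\lambda-\mathcal{L}^*)^{-1}$ is compact via an energy estimate and the compact embedding $H^1(\rho_{eq}\dd\mb V)\hookrightarrow\hookrightarrow L^2(\rho_{eq}\dd\mb V)$, concludes that $0$ is an isolated simple eigenvalue with eigenfunction $\rho_{eq}$, and reads off the spectral gap as \eqref{eq:Poincare2}. You instead establish the unweighted spectral gap of the Laplace--Beltrami operator on the compact connected manifold (itself by the same Rellich--Kondrachov compactness-plus-contradiction mechanism) and then transfer it to the weighted measure using the two-sided bounds $0<a\leq\rho_{eq}\leq b$ together with the variational characterization of the weighted mean; that chain of four inequalities is valid. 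The core analytic input is identical in both arguments, but yours is more modular and yields an explicit constant $C=aC_0/b$ in terms of the unweighted gap $C_0$ and the oscillation of the weight (which degrades as $\|U\|_{\infty}/\sigma^2$ grows), whereas the paper's argument is intrinsic to the weighted space but leaves the constant implicit. Your connectedness remark is a genuine catch rather than pedantry: $O(n)$ has two components, a locally constant function with different values on the two sheets has zero gradient but nonzero variance, so a Poincar\'e inequality with a single mean-zero constraint cannot hold on all of $O(n)$; the paper silently conflates $O(n)$ and $SO(n)$ throughout this section (e.g., $Z$ is defined by integrating over $O(n)$ while $\dd\mb V$ is declared to be the volume form on $SO(n)$), and your restriction to the connected component containing the initial datum, justified by continuity of sample paths, is the correct reading and the one under which both proofs go through.
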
 
\begin{proof}
	To prove this, we prove that for any $\lambda>0$, $(\lambda-\mathcal{L}^*)^{-1}:\ L^2\left(\dd\mb{V}/\rho_{eq}(\mb{Q})\right)\to L^2\left(\dd\mb{V}/\rho_{eq}(\mb{Q})\right)$ is a compact operator. Suppose that $\{u_n\}_{n\geq 1}$ and $\{g_n\}_{n\geq 1}$ are two sequences in $L^2(\dd\mb{V}/\rho_{eq}(\mb{Q}))$ such that 
	\begin{align*}
	(\lambda-\mathcal{L}^*)u_n=g_n,\ n\geq 1,
	\end{align*}
	and $\{g_n\}_{n\geq 1}$ is uniformly bounded in $L^2(\dd\mb{V}/\rho_{eq}(\mb{Q}))$. Then multiplying $u_n$ on both sides yields
	\begin{align*}
	\int_{O(n)}\dfrac{1}{\rho_{eq}}\cdot u_n\cdot(\lambda-\mathcal{L}^*)u_n\dd\mb{V} &= \int_{O(n)}\rho_{eq}\left|\nabla_{\mb{Q}}\left(\dfrac{u_n}{\rho_{eq}}\right)\right|^2\dd\mb{V} + \lambda\int_{SO(n)}\rho_{eq}\left|\dfrac{u_n}{\rho_{eq}}\right|^2\dd\mb{V}\\
	&= \int_{O(n)}\dfrac{1}{\rho_{eq}}\cdot g_nu_n\dd\mb{V}.
	\end{align*}
	By Cauchy-Schwartz's inequality, we know that $u_n/\rho_{eq}$ uniformly bounded in $H^1(\rho_{eq}\dd\mb{V})$. The compact embedding $H^1(\rho_{eq}\dd\mb{V})\hookrightarrow\hookrightarrow  L^2(\rho_{eq}\dd\mb{V})$ (see \cite{taylor1996partial}) implies that up to subsequences, there exists $u^*\in L^2(\rho_{eq}\dd\mb{V})$,
	\begin{align*}
	\dfrac{u_n}{\rho_{eq}}\to\dfrac{u^*}{\rho_{eq}}\ \mathrm{in}\ L^2(\rho_{eq}\dd\mb{V}),
	\end{align*}
	or equivalently
	\begin{align*}
	u_n\to u^*\ \mathrm{in}\ L^2(\dd\mb{V}/\rho_{eq}).
	\end{align*}
	Thus $(\lambda-\mathcal{L}^*)^{-1}$ is compact. Thus $1/\lambda\neq 0$ is not an accumulation point of $\sigma((\lambda-\mathcal{L}^*)^{-1})$, hence 0 is not an accumulation point of $\sigma(\mathcal{L}^*)$, but the single principal eigenvalue of $\mathcal{L}^*$, whose eigenvectors are $c\cdot\rho_{eq}$ where $c$ is a constant. So for any $g\in L^2(\dd\mb{V}/\rho_{eq})$, we have
	\begin{align}
	-\int_{O(n)}\dfrac{g}{\rho_{eq}}\mathcal{L}^*g\dd\mb{V} \geq C\int_{O(n)}\dfrac{|g|^2}{\rho_{eq}}\dd\mb{V}
	\end{align}
	for all $g$ satisfying $\int_{O(n)}g\dd\mb{V}=0$. Let $g=\rho_{eq}f$, we have \eqref{eq:Poincare2}.
\end{proof}
\subsection{The case of $n=2$}
If we ask $\mb F,\mb H$ in 
\eqref{eq:diffusionapprox} to satisfy \eqref{eq:condition2}, then the SDE reads as
\begin{align} \label{eq:n=2}
	\mathrm{d}w_{ij} = g_{ij}(\mb{A},\mb{W})\mathrm{d}t+\sqrt{\eta}\cdot N_{ijkl}\circ\mathrm{d}B_{kl}.
\end{align}
Here $\mb{M}(\mb{W})$ is the covariance matrix of $\mb{G}(\mb{x}\mb{x}^T,\mb{W})$ defined in \eqref{eq:cov}. By Lemma \ref{prop:orthogonality}, 
\eqref{eq:n=2} admits the Stiefel manifold as an invariant set. Utilizing this fact, we can reformulate \eqref{eq:n=2}:
\begin{lemma} \label{lmm:n=2reform}
	Suppose $n=2$. Consider \eqref{eq:diffusionapprox} where $\mb{H}$ and $\mb{H}$ satisfy \eqref{eq:condition2}. Then \eqref{eq:diffusionapprox} (or \eqref{eq:n=2}) can be reformulated as 
	\begin{align} \label{eq:reform1}
		\mathrm{d}\mb{W} = \mb{F_1}(\mb{W})\mathrm{d}t +\sqrt{\eta}\cdot c(\mb{W})\mb{W}\circ\mathrm{d}\mb{Z}.
	\end{align} 
	Here $\mb{F_1}$ is defined in \eqref{def:F}, $c(\mb{W})$ is a scalar defined as
	\begin{equation} \label{def:c}
		\begin{aligned}
		c(\mb{W}) &:= \sqrt{c_1(w_{1,1}^4+w_{1,2}^4)+c_2w_{1,1}^2w_{1,2}^2+c_3w_{1,1}w_{1,2}(w_{1,1}w_{2,2}+w_{1,2}w_{2,1})},\\
		c_1 &:=\E[x_1^2x_2^2]-(\E[x_1x_2])^2,\\
		c_2 &:=\E[x_1^4+x_2^4-4x_1^2x_2^2]+2(\E[x_1x_2])^2+2\E[x_1^2]\E[x_2^2]-(\E[x_1^2])^2-(\E[x_2^2])^2,\\\
		c_3 &:=2\E[x_1^3x_2-x_1x_2^3]-\E[x_1^2]\E[x_1x_2]+\E[x_2^2]\E[x_1x_2]
		\end{aligned}
	\end{equation}
	and $\mb{Z}(t)\in\R^{2\times 2}$ is defined as
	\begin{align} \label{eq:Z}
		\mb{Z}(t):=\begin{pmatrix}
		0 & -B(t)\\
		B(t) & 0\\
		\end{pmatrix},
	\end{align} 
	where $B(t)$ is the standard Brownian motion in one dimension. 
\end{lemma}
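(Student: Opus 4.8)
The key structural fact is that for $n=2$ the orthogonal group $O(2)$ is one‑dimensional, so \eqref{eq:n=2}---which by Lemma \ref{prop:orthogonality} lives on $O(2)$---is really a scalar Stratonovich SDE in disguise, and the lemma only asks us to make that reduction explicit. Write $\mb J$ for the constant antisymmetric matrix with $\mb Z(t)=B(t)\mb J$ in \eqref{eq:Z}, so that $T_{\mb W}O(2)=\{\mb W\mb\Omega:\mb\Omega^T=-\mb\Omega\}=\R\cdot\mb W\mb J$. The plan has three parts: (1) show the drift of \eqref{eq:n=2} is $\mb F_1(\mb W)$ and is a scalar multiple of $\mb W\mb J$; (2) show the noise $\sqrt\eta\,\mb N\circ\dd\mb B$ is rank one with range in $\R\cdot\mb W\mb J$, so it collapses onto a single scalar Brownian motion; (3) identify the resulting scalar coefficient with $c(\mb W)$ in \eqref{def:c}. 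For step (1): on $O(n)$ the identity $\mb W\mb W^T=\mb I$ cancels the first two terms of $\mb G$ in \eqref{def:G}, so $\mb G(\mb\Lambda,\mb W)=\mb W\mb\Sigma(\mb\Lambda,\mb W)$; in particular $\mb G(\mb A,\mb W)=\mb F_1(\mb W)$ from \eqref{def:F}. For $n=2$ the double sum defining $\mb\Sigma$ has the single term $(j,k)=(2,1)$, and since $\mb W^T\mb\Lambda\mb W$ is symmetric this gives $\mb\Sigma(\mb\Lambda,\mb W)=(\mb W^T\mb\Lambda\mb W)_{12}\,\mb J$; hence $\mb F_1(\mb W)=(\mb W^T\mb A\mb W)_{12}\,\mb W\mb J$, and more generally $\mb G(\mb x\mb x^T-\mb A,\mb W)=\gamma(\mb x)\,\mb W\mb J$ with $\gamma(\mb x):=y_1y_2-\E[y_1y_2]$ and $\mb y:=\mb W^T\mb x$.

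For step (2): because \eqref{eq:n=2} stays on $O(2)$, Lemma \ref{lmm:orth} gives $P_{ijrs}N_{rskl}=N_{ijkl}$ there, so each slice $(N_{ijkl})_{1\le i,j\le 2}$ lies in the one‑dimensional space $\R\cdot\mb W\mb J$; together with the symmetry and positive semidefiniteness of $\mb N=\sqrt{\mb M}$ this forces $N_{ijkl}=\mu(\mb W)\hat v_{ij}\hat v_{kl}$, where $\hat v:=\mb W\mb J/\sqrt 2$ is the unit tangent matrix (recall $\|\mb W\mb J\|_F^2=2$ on $O(2)$) and $\mu(\mb W)=\sqrt{\mathrm{tr}\,\mb M}\ge 0$ (since $\mb M=\mb N^2$, whence $\mathrm{tr}\,\mb M=\mu^2$). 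Thus every diffusion vector field of $\mb N\circ\dd\mb B$ is a scalar multiple of the common unit direction $\hat v(\mb W)$, and by the standard fact that a Stratonovich SDE all of whose diffusion vector fields are multiples of one unit vector field $V$, with the squared multipliers summing to $c(\cdot)^2$, is weakly equivalent to the one‑dimensional SDE with diffusion $c(\mb W)\,V(\mb W)\circ\dd\beta$ for a scalar Brownian motion $\beta$---both reduce to the same Itô SDE up to the driving martingale, the Itô--Stratonovich corrections both being $\tfrac12(cV\cdot\nabla)(cV)$ and the brackets agreeing---\eqref{eq:n=2} has the same law as $\dd\mb W=\mb F_1(\mb W)\dd t+\sqrt\eta\,\tfrac{\mu(\mb W)}{\sqrt2}\,\mb W\mb J\circ\dd\beta$. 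Setting $c(\mb W):=\mu(\mb W)/\sqrt2=\sqrt{\tfrac12\mathrm{tr}\,\mb M}$, renaming $\beta$ as $B$ and using $\mb Z(t)=B(t)\mb J$, this is exactly \eqref{eq:reform1}.

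For step (3): from step (1), $\mathrm{tr}\,\mb M=\sum_{i,j}M_{ijij}=\E\bigl[\|\mb G(\mb x\mb x^T-\mb A,\mb W)\|_F^2\bigr]=\|\mb W\mb J\|_F^2\,\E[\gamma^2]=2\,\mathrm{Var}(y_1y_2)$, hence $c(\mb W)^2=\mathrm{Var}(y_1y_2)=\E[y_1^2y_2^2]-(\E[y_1y_2])^2$. Substituting $y_1=w_{1,1}x_1+w_{2,1}x_2$, $y_2=w_{1,2}x_1+w_{2,2}x_2$, expanding, collecting the fourth‑ and second‑order moments of $\mb x$, and simplifying with the $O(2)$ relations $w_{1,1}^2+w_{1,2}^2=1$, $w_{1,1}^2=w_{2,2}^2$, $w_{1,2}^2=w_{2,1}^2$, and $w_{1,1}w_{2,1}+w_{1,2}w_{2,2}=0$ collapses this expression to $c_1(w_{1,1}^4+w_{1,2}^4)+c_2w_{1,1}^2w_{1,2}^2+c_3w_{1,1}w_{1,2}(w_{1,1}w_{2,2}+w_{1,2}w_{2,1})$ with $c_1,c_2,c_3$ as in \eqref{def:c}. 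The conceptual content is entirely in steps (1)--(2); the main obstacle is the algebra of step (3)---tracking how the many moment terms recombine into exactly three coefficients, in particular recovering the cross term $c_3$ with the precise combination $w_{1,1}w_{1,2}(w_{1,1}w_{2,2}+w_{1,2}w_{2,1})$, which forces nontrivial use of the orthogonality identity $w_{1,1}w_{2,1}+w_{1,2}w_{2,2}=0$---together with the one delicate point in step (2) that collapsing the four Brownian motions $B_{kl}$ onto the single $\beta$ leaves the Stratonovich correction unchanged, which works only because the common noise direction $\mb W\mb J$ is tangent to $O(2)$ and of constant Frobenius norm there.
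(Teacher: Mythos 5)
Your proposal is correct and follows essentially the same route as the paper's proof: both reduce on $O(2)$ to $\mb G(\mb\Lambda,\mb W)=\mb W\mb\Sigma(\mb\Lambda,\mb W)=(\mb W^T\mb\Lambda\mb W)_{12}\,\mb W\mb J$, identify the noise as a rank-one tensor pointing along $\mb W\mb J$, collapse the four driving Brownian motions onto a single scalar, and finally identify the scalar coefficient with $\sqrt{\E[b^2]}$ where $b=(\mb W^T(\mb x\mb x^T-\mb A)\mb W)_{12}$ (your $\gamma(\mb x)=y_1y_2-\E[y_1y_2]$). The two cosmetic differences are that you derive the rank-one form $N_{ijkl}=\mu\hat v_{ij}\hat v_{kl}$ intrinsically from $\mb{PN}=\mb N$ plus symmetry and positive semidefiniteness, whereas the paper works with the vectorized covariance $\mb M=\E[b^2]\,\mb u\mb u^T$ and reads off $\sqrt{\mb M}$ directly; and that you flag explicitly the subtlety of collapsing $\sqrt\eta\,\mb u\mb u^T\circ\dd\mb B$ to $\sqrt{2\eta}\,\mb u\circ\dd B$ in the Stratonovich sense, which the paper passes over in silence. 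Your justification of that last point is slightly misattributed, though: the Stratonovich corrections agree not because $\mb W\mb J$ is tangent to $O(2)$ and of constant Frobenius norm, but for the purely algebraic reason you in fact use in the middle of the argument — with $\sigma_k=a_kV$ and $c^2=\sum_k a_k^2$, one has $\sum_k a_k(V\cdot\nabla a_k)=\tfrac12 V\cdot\nabla\bigl(\textstyle\sum_k a_k^2\bigr)=cV\cdot\nabla c$, so $\tfrac12\sum_k(\sigma_k\cdot\nabla)\sigma_k=\tfrac12(cV\cdot\nabla)(cV)$ with no geometric input needed. Aside from that cosmetic point and the algebra of step (3), which you state but do not carry out (the paper does, via \eqref{eq:b}), the argument is sound.
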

See Section \ref{sec:appendix} for the proof of this lemma.
Remember that each element in $O(2)$ can be expressed in either of the following forms:
\begin{align} \label{def:O2}
O_1(\theta) &=
\begin{pmatrix}
\cos\theta & \sin\theta \\
-\sin\theta & \cos\theta
\end{pmatrix},\ \theta\in[0,2\pi),\\
O_{-1}(\theta) &=
\begin{pmatrix}
\cos\theta & \sin\theta \\
\sin\theta & -\cos\theta
\end{pmatrix},\ \theta\in[0,2\pi).
\end{align}
Because the orbit of $\mb{W}(t)$ in \eqref{eq:diffusionapprox} is continuous a.s. and the determinant is also a continuous function w.r.t. $\mb{W}$, so if $\mb{W_0}=O_1(\theta)$ for some $\theta$, then $\mb{W}(t)=O_1(\theta(t))$ for some $\theta(t)\in[0,2\pi)$.  
Without loss of generality, we assume $\mb{W_0}=O_1(\theta_0)$ for some $\theta_0\in[0,2\pi)$, thus for any $t\geq 0$, 
\begin{align}
	w_{1,1}(t)=w_{2,2}(t),\ w_{1,2}(t)+w_{2,1}(t)=0,\ w_{1,1}^2(t)+w_{1,2}^2(t)=1.
\end{align}

	To prove the convergence of \eqref{eq:reform1}, we consider the process of $\theta(t)$ instead of $\mb{W}$. We construct the following one dimensional SDE in Ito's sense: 
	\begin{align} \label{eq:theta}
	\mathrm{d}\theta(t)=f(\theta)\mathrm{d}t+\sqrt{\eta}g(\theta)\mathrm{d}B,\ \theta(t)=\theta_0,
	\end{align}
	where $B(t)$ is the standard Brownian motion, $f,g$ are defined as
	\begin{equation} \label{eq:fg}
	\begin{aligned}
	g(\theta) &= -c(\theta)\\
	f(\theta) &= (\E[x_2^2]-\E[x_1^2])\cos\theta\sin\theta+\dfrac{\eta(2c_1(\theta)-c_2(\theta))}{2}(\cos^3\theta\sin\theta-\sin^3\theta\cos\theta)\\
	&+\dfrac{3\eta c_3(\theta)}{2}\cos^2\theta\sin^2\theta.
	\end{aligned}
	\end{equation}
	Here $c,c_1,c_2$ and $c_3$ are the scalar functions defined in \eqref{def:c} and $c(\theta)$ is $c(\mb{W})$ where $\mb{W}$ is replaced by $O_1(\theta)$, i.e.
	\begin{align}
	c(\theta)=c\left(
	\begin{pmatrix}
	\cos\theta & \sin\theta\\
	-\sin\theta & \cos\theta
	\end{pmatrix}
	\right).
	\end{align}
	
Then we can use the solution of \eqref{eq:theta} to represent the solution of \eqref{eq:reform1}.
\begin{lemma} \label{lmm:reform2}
	Consider \eqref{eq:reform1}. Suppose that the initial value $\mb{W}(0)$ satisfies $\mb{W}(0)=O_1(\theta_0)$ (defined in \eqref{def:O2}) for some $\theta_0\in[0,2\pi)$. Suppose that $\theta(t)$ is the solution of \eqref{eq:theta}, then
	\begin{align} \label{eq:w}
		w_{1,1}(t)=w_{2,2}(t)=\cos(\theta(t)), w_{1,2}(t)=\sin(\theta(t)),\ w_{2,1}(t)=-\sin(\theta(t))
	\end{align} 
	solves \eqref{eq:reform1} with the initial value $\mb{W}(0)=O_1(\theta_0)$.
\end{lemma}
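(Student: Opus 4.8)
The plan is to verify the statement by substituting the ansatz $\mb{W}(t) = O_1(\theta(t))$ (in the notation of \eqref{def:O2}) into \eqref{eq:reform1} and checking that the two sides agree; the natural framework is the Stratonovich calculus, in which the ordinary chain rule holds. Since $\theta$ is prescribed by the It\^o equation \eqref{eq:theta}, I would first rewrite it in Stratonovich form,
\[
\dd\theta = \tilde f(\theta)\,\dd t + \sqrt{\eta}\,g(\theta)\circ\dd B, \qquad \tilde f(\theta) := f(\theta) - \tfrac{\eta}{2}\,g(\theta)g'(\theta),
\]
and, using the first line of \eqref{eq:fg} to replace $g$ by $-c$, observe that $\tfrac{\eta}{2}gg' = \tfrac{\eta}{4}\,\tfrac{\dd}{\dd\theta}c(\theta)^2$, so that $\tilde f(\theta) = f(\theta) - \tfrac{\eta}{4}\,\tfrac{\dd}{\dd\theta}c(\theta)^2$.

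Next I would evaluate the right-hand side of \eqref{eq:reform1} along the ansatz, which requires three elementary computations. (i) Substituting $\mb{Q} = O_1(\theta)$ into \eqref{def:G}--\eqref{def:F} with $n = 2$ gives $\mb{\Sigma}(\mb{A}, O_1(\theta)) = (\lambda_1 - \lambda_2)\cos\theta\sin\theta\left(\begin{smallmatrix}0 & -1 \\ 1 & 0\end{smallmatrix}\right)$, whence $\mb{F_1}(O_1(\theta)) = -(\lambda_1 - \lambda_2)\cos\theta\sin\theta\, O_1'(\theta)$, where $\lambda_1 = \E[x_1^2]$ and $\lambda_2 = \E[x_2^2]$ because $\mb{A} = \E[\mb{x}\mb{x}^T]$ is diagonal. (ii) Since $\mb{Z}$ in \eqref{eq:Z} is a fixed matrix times the scalar $B$, one has $O_1(\theta)\circ\dd\mb{Z} = O_1(\theta)\left(\begin{smallmatrix}0 & -1 \\ 1 & 0\end{smallmatrix}\right)\circ\dd B = -O_1'(\theta)\circ\dd B$. (iii) Evaluating \eqref{def:c} at $\mb{W} = O_1(\theta)$, using $w_{1,1} = w_{2,2} = \cos\theta$ and $w_{1,2} = -w_{2,1} = \sin\theta$, gives the closed form $c(\theta)^2 = c_1(\cos^4\theta + \sin^4\theta) + c_2\cos^2\theta\sin^2\theta + c_3\cos\theta\sin\theta(\cos^2\theta - \sin^2\theta)$.

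Combining (i)--(iii), the right-hand side of \eqref{eq:reform1} along the ansatz equals $-O_1'(\theta)\bigl[(\lambda_1 - \lambda_2)\cos\theta\sin\theta\,\dd t + \sqrt{\eta}\,c(\theta)\circ\dd B\bigr]$, whereas the Stratonovich chain rule gives $\dd\mb{W} = \dd O_1(\theta) = O_1'(\theta)\circ\dd\theta = O_1'(\theta)\bigl[\tilde f(\theta)\,\dd t + \sqrt{\eta}\,g(\theta)\circ\dd B\bigr]$. As $O_1'(\theta)$ never vanishes, equating these two matrix-valued expressions is equivalent to the two scalar identities $g(\theta) = -c(\theta)$ and $\tilde f(\theta) = -(\lambda_1 - \lambda_2)\cos\theta\sin\theta = (\E[x_2^2] - \E[x_1^2])\cos\theta\sin\theta$. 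The first is the first line of \eqref{eq:fg}; the second, via $\tilde f = f - \tfrac{\eta}{4}\,\tfrac{\dd}{\dd\theta}c(\theta)^2$, is equivalent to
\[
f(\theta) = (\E[x_2^2] - \E[x_1^2])\cos\theta\sin\theta + \frac{\eta}{4}\,\frac{\dd}{\dd\theta}c(\theta)^2 .
\]

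The main obstacle — and the only step involving genuine computation — is to confirm that this last identity is exactly the formula for $f$ in \eqref{eq:fg}. For this one differentiates the closed form of $c(\theta)^2$ from (iii), using $\tfrac{\dd}{\dd\theta}(\cos^4\theta + \sin^4\theta) = -4(\cos^3\theta\sin\theta - \sin^3\theta\cos\theta)$, $\tfrac{\dd}{\dd\theta}(\cos^2\theta\sin^2\theta) = 2(\cos^3\theta\sin\theta - \sin^3\theta\cos\theta)$ and $\tfrac{\dd}{\dd\theta}\bigl(\cos\theta\sin\theta(\cos^2\theta - \sin^2\theta)\bigr) = \cos^4\theta + \sin^4\theta - 6\cos^2\theta\sin^2\theta$, then collects the coefficients of the independent trigonometric monomials and matches them with the $c_1$-, $c_2$- and $c_3$-terms of \eqref{eq:fg}; this is bookkeeping rather than a conceptual difficulty. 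Once it is settled, the two scalar identities show that the chain-rule expression for $\dd\mb{W}$ coincides with $\mb{F_1}(\mb{W})\,\dd t + \sqrt{\eta}\,c(\mb{W})\mb{W}\circ\dd\mb{Z}$ along the ansatz. Finally, $\mb{W}(t) = O_1(\theta(t))$ lies in $O(2)$ and satisfies the three scalar constraints recorded just before the statement by construction, and $\mb{W}(0) = O_1(\theta_0)$; hence \eqref{eq:w} solves \eqref{eq:reform1} with the prescribed initial value. Only the existence of a solution of this particular form is asserted, so no uniqueness argument is needed.
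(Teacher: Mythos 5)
Your route---passing to the Stratonovich form of \eqref{eq:theta}, applying the ordinary chain rule to $O_1(\theta)$, and reducing the matrix identity to the two scalar identities $g=-c$ and $\tilde f = (\E[x_2^2]-\E[x_1^2])\cos\theta\sin\theta$---is a clean and valid alternative to the paper's argument, which instead converts both \eqref{eq:reform1} and the ansatz into It\^o form and matches coefficients via the explicit correction terms $h_1,h_2$. Your computations (i)--(iii) are correct, as is the conversion $f = \tilde f + \tfrac{\eta}{4}\tfrac{\dd}{\dd\theta}c(\theta)^2$.

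But the step you wave off as ``bookkeeping'' is precisely where the argument breaks. Carrying through the differentiation you have set up gives
\begin{equation*}
\frac{\dd}{\dd\theta}c(\theta)^2 = (2c_2-4c_1)\bigl(\cos^3\theta\sin\theta-\sin^3\theta\cos\theta\bigr) + c_3\bigl(\cos^4\theta+\sin^4\theta-6\cos^2\theta\sin^2\theta\bigr),
\end{equation*}
hence
\begin{equation*}
f(\theta) = (\E[x_2^2]-\E[x_1^2])\cos\theta\sin\theta + \frac{\eta(c_2-2c_1)}{2}\bigl(\cos^3\theta\sin\theta-\sin^3\theta\cos\theta\bigr) + \frac{\eta c_3}{4}\bigl(\cos^4\theta+\sin^4\theta-6\cos^2\theta\sin^2\theta\bigr).
\end{equation*}
This is \emph{not} the $f$ stated in \eqref{eq:fg}: there the $(c_1,c_2)$-coefficient is $\tfrac{\eta(2c_1-c_2)}{2}$, i.e.\ the opposite sign, and the $c_3$-term is $\tfrac{3\eta c_3}{2}\cos^2\theta\sin^2\theta$, a genuinely different trigonometric function (at $\theta=0$ your expression gives $\eta c_3/4$ while \eqref{eq:fg} gives $0$). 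You therefore cannot assert that the identity you need ``is exactly the formula for $f$ in \eqref{eq:fg}''; having correctly set up the three derivatives, you should have carried out the final comparison and flagged the mismatch. The discrepancy appears to originate in the paper's own \eqref{eq:help2}, where $(\mb{F_1})_{1,1}$ is recorded as $(\E[x_2^2]-\E[x_1^2])w_{1,1}w_{1,2}$ but in fact equals $(\E[x_1^2]-\E[x_2^2])w_{1,1}w_{1,2}^2=(\lambda_1-\lambda_2)w_{1,1}w_{1,2}^2$, and propagates to \eqref{eq:fg}; nevertheless, as written your proof concludes by asserting a match that does not hold, so it does not establish the lemma with the $f$ of \eqref{eq:fg}.
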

See Section \ref{sec:appendix} for the proof of this lemma.

\subsubsection{Convergence analysis}
Let $\T=\mathbb{\R}/{2\pi\mathbb{Z}}$. Now consider \eqref{eq:theta}. We denote the invariant measure of this SDE as $\rho_{\infty}(x)\in\mathcal{P}(\T)$. Then $\rho_{\infty}$ is the stationary solution to the Fokker-Planck equation with the periodic boundary condition, i.e.
\begin{align} \label{eq:FPfortheta}
	\dfrac{\p \rho(x,t)}{\p t}=\mathcal{L}_1\rho(x,t),\ \mathcal{L}_1\rho(x,t):=-\dfrac{\p (f(x)\rho(x,t))}{\p x}+\dfrac{1}{2}\cdot\dfrac{\p^2 (\eta g^2(x)\rho(x,t))}{\p x^2}.
\end{align}
Here $f,g$ are defined in \eqref{eq:fg}
A direct calculation yields
\begin{align} \label{eq:invariantmeasure}
\rho_{\infty}(x)=C\exp\left(\int_0^x\dfrac{2f(s)}{\eta\cdot g^2(s)}\mathrm{d}s\right),\ C=\left(\exp\left(\int_0^{2\pi}\dfrac{2f(s)}{\eta\cdot g^2(s)}\dd s\right)\right)^{-1}.	
\end{align}

According to the proof of Lemma \ref{lmm:reform2} (see Section \ref{sec:appendix}), we know that $g(x)$ is strictly positive on $\T$, thus $\rho_{\infty}(x)>0$ for any $x\in\T$, thus the exponential convergence holds by a Poincare's inequality. Define the weighted $L^2$ space $L^2(\T,\dd x/\rho_{\infty})$ as 
\begin{align} \label{def:weightedL2}
L^2(\T,\dd x/\rho_{\infty}):=\left\{p:\int_{\T}\dfrac{p^2(x)}{\rho_{\infty}(x)}\dd x<\infty\right\}, \langle p,q\rangle_{\dd x/\rho_{\infty}}=\int_{\T}\dfrac{p(x)q(x)}{\rho_{\infty}(x)}\dd x.
\end{align}

\begin{theorem} \label{thm:expconvoftheta}
	Suppose that $\theta(t),\ t\geq 0$ solves \eqref{eq:theta} with the initial value $\theta_0$, which is a random variable with density function $\rho_0\in\mathcal{P}(\T)$. Let $\rho(x,t)\in\mathcal{P}(\T)$ be the law of $\theta(t)$, i.e., $\mathbb{P}(\theta(t)\in B)=\int_{B}\rho(x,t)\dd x$ for any Borel set $B\in \T$. Consider $\rho_{\infty}(x)\in\mathcal{P}(\T)$, i.e. the invariant measure in \eqref{eq:invariantmeasure}. Then there exists a constant $c>0$ only depending on $\eta$ and momentums of $\mb{x}$ such that
	\begin{align} \label{eq:expconvofmeasure}
	\|\rho(\cdot,t)-\rho_{\infty}\|_{L^2(\T,\dd x/\rho_{\infty})}\leq e^{-ct}\|\rho_0-\rho_{\infty}\|_{L^2(\T,\dd x/\rho_{\infty})}.
	\end{align} 
\end{theorem}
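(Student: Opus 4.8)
The plan is to carry over, to this one-dimensional periodic setting, the reversibility-plus-Poincar\'e scheme already used for the overdamped Langevin dynamics in Section \ref{sec:OLE}. In order, I would: (1) check that \eqref{eq:theta} is globally well posed and that the law $\rho(\cdot,t)$ of $\theta(t)$ is a classical solution of the Fokker--Planck equation \eqref{eq:FPfortheta}; (2) recast the generator $\mathcal{L}_1$ in self-adjoint divergence form with respect to $\rho_{\infty}\,\dd x$; (3) write down the energy-dissipation identity for $e(t):=\|\rho(\cdot,t)-\rho_{\infty}\|^2_{L^2(\T,\dd x/\rho_{\infty})}$; (4) bound the dissipation below by a weighted Poincar\'e inequality on $\T$; and (5) conclude by Gr\"onwall and a square root.

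For steps (1)--(2): by Lemma \ref{lmm:reform2} and its proof the coefficient $g$ in \eqref{eq:fg} is smooth and strictly positive on the compact circle $\T$, so $\eta g^2$ is bounded below, \eqref{eq:theta} is uniformly elliptic, and standard SDE/parabolic theory gives a unique strong solution whose law solves \eqref{eq:FPfortheta} classically with conserved mass $\int_{\T}\rho(x,t)\,\dd x\equiv 1$. Since $\rho_{\infty}$ from \eqref{eq:invariantmeasure} is then a smooth, strictly positive probability density on $\T$ with $\p_x\log\rho_{\infty}=2f/(\eta g^2)$, substituting this identity into $\mathcal{L}_1$ and simplifying gives the reversible form
\begin{align*}
	\mathcal{L}_1\rho=\frac{\eta}{2}\,\p_x\!\left(g^2(x)\,\rho_{\infty}(x)\,\p_x\!\left(\frac{\rho}{\rho_{\infty}}\right)\right),
\end{align*}
which is the detailed-balance structure that one-dimensional diffusions possess automatically.

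For steps (3)--(5): set $h:=\rho/\rho_{\infty}-1$, so that mass conservation gives $\int_{\T}h\,\rho_{\infty}\,\dd x=0$. Differentiating $e(t)=\int_{\T}\rho_{\infty}h^2\,\dd x$, using the reversible form and integrating by parts over the boundaryless $\T$,
\begin{align*}
	\frac{\dd}{\dd t}e(t)=-\eta\int_{\T}g^2(x)\,\rho_{\infty}(x)\,|\p_x h(x,t)|^2\,\dd x\le-\eta\Big(\min_{\T}g^2\Big)\int_{\T}\rho_{\infty}\,|\p_x h|^2\,\dd x.
\end{align*}
Because $\rho_{\infty}$ is bounded above and below on the compact $\T$, the norm of $L^2(\T,\rho_{\infty}\dd x)$ is equivalent to that of $L^2(\T,\dd x)$, so the classical Poincar\'e inequality on $\T$ (equivalently, the compact-embedding / spectral-gap argument used in Section \ref{sec:OLE}) yields $C_P>0$ with $\int_{\T}\rho_{\infty}|\p_x h|^2\ge C_P\int_{\T}\rho_{\infty}h^2$ for every mean-zero $h$. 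Hence $\tfrac{\dd}{\dd t}e(t)\le-2c\,e(t)$ with $c:=\tfrac12\,\eta\,(\min_{\T}g^2)\,C_P$, a constant depending only on $\eta$ and the moments of $\mb x$; Gr\"onwall's inequality and taking square roots then give \eqref{eq:expconvofmeasure}. (If $\rho_0\notin L^2(\T,\dd x/\rho_{\infty})$ the bound is vacuous; otherwise the estimate propagates from $t=0$, and in any case it holds for $t\ge t_0>0$ by parabolic smoothing.)

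I do not expect a serious obstacle: relative to Section \ref{sec:OLE} the only genuinely new ingredient is the explicit, strictly positive invariant density \eqref{eq:invariantmeasure}, whose positivity was already reduced to the strict positivity of $g$ in Lemma \ref{lmm:reform2}. The one place that needs honest checking is the bookkeeping behind the reversible recasting: verifying that $f$ and $g$ from \eqref{eq:fg} make \eqref{eq:invariantmeasure} genuinely $2\pi$-periodic (so that $\rho_{\infty}$ is a well-defined density on $\T$) and that the integration-by-parts boundary terms vanish — both following from the periodicity of the coefficients and from working on a manifold without boundary. The remaining analysis is a near-verbatim adaptation of Section \ref{sec:OLE}.
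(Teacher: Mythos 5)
Your proposal is essentially the same reversibility$+$Poincar\'e$+$Gr\"onwall argument the paper uses, but you make one genuine improvement that deserves to be pointed out. The paper's proof of Theorem~\ref{thm:expconvoftheta} introduces the operator
\begin{align*}
	\mathcal{L}_1^*p:=-\frac{\dd}{\dd x}\left(\rho_{\infty}(x)\,\frac{\dd}{\dd x}\left(\frac{p(x)}{\rho_{\infty}(x)}\right)\right),
\end{align*}
runs the compact-resolvent argument on it, and then multiplies \eqref{eq:FPfortheta} by $\rho/\rho_\infty-1$. But this $\mathcal{L}_1^*$ is \emph{not} the operator $\mathcal{L}_1$ of \eqref{eq:FPfortheta}: the diffusion coefficient $\tfrac{\eta}{2}g^2(x)$ is missing, so the energy identity the paper writes down does not actually follow from the Fokker--Planck equation as stated. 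Your divergence-form recasting
\begin{align*}
	\mathcal{L}_1\rho=\frac{\eta}{2}\,\p_x\!\left(g^2(x)\,\rho_{\infty}(x)\,\p_x\!\left(\frac{\rho}{\rho_{\infty}}\right)\right)
\end{align*}
keeps this weight explicit, which is what is actually needed; after $g^2\ge\min_{\T}g^2>0$ you reduce cleanly to the same Poincar\'e inequality (and your remark that one can skip the compact-resolvent machinery because $\rho_\infty$ and $g^2$ are bounded above and below on the compact $\T$ is correct and a mild simplification of the paper's route).

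One caveat you should resolve before finalizing: the identity $\p_x\log\rho_\infty=2f/(\eta g^2)$ that you import from \eqref{eq:invariantmeasure} is not consistent with the divergence form you then assert. For the zero-flux stationary solution of \eqref{eq:FPfortheta} the correct relation is $\p_x\log\!\bigl(g^2\rho_\infty\bigr)=2f/(\eta g^2)$, i.e.\ $\rho_\infty(x)=\tfrac{C}{g^2(x)}\exp\!\bigl(\int_0^x 2f/(\eta g^2)\bigr)$; it is \emph{this} $\rho_\infty$ for which $\mathcal{L}_1\rho=\tfrac{\eta}{2}\p_x\bigl(g^2\rho_\infty\p_x(\rho/\rho_\infty)\bigr)$ holds. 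With the density exactly as written in \eqref{eq:invariantmeasure} the two sides differ by $\p_x\bigl(\tfrac{\eta}{2}\p_x(g^2)\rho\bigr)$, which vanishes only when $g$ is constant. This looks like a dropped $1/g^2$ factor in the paper's \eqref{eq:invariantmeasure} (and is the same inconsistency behind the paper's $\mathcal{L}_1^*$ missing $\tfrac{\eta}{2}g^2$). Your argument goes through verbatim once you use the zero-flux $\rho_\infty$; you should simply state that formula rather than cite \eqref{eq:invariantmeasure} as-is. The rest — mass conservation, periodicity/positivity of $\rho_\infty$, vanishing boundary terms on $\T$, and the $t=0$ integrability caveat — is handled correctly.
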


See Section \ref{sec:appendix} for proof of Theorem \ref{thm:expconvoftheta}.

\subsubsection{An example} In this section, we explicitly calculated an example here to illustrate our main results. Suppose that $\mb{x}=(x_1,x_2)^T$ where $x_1$ and $x_2$ are independent random variables such that they possess density function 
\begin{align*}
	\rho_1=\dfrac{1}{4}\cdot\mathbbm{1}_{(-2,2)},\  \rho_2=\dfrac{1}{2}\cdot\mathbbm{1}_{(-1,1)},
\end{align*}
i.e. $x_1\sim\mathrm{Uni}(-2,2)$ and $x_2\sim\mathrm{Uni}(-1,1)$. Then the covariance matrix of $\mb{x}$ is 
\begin{align*}
	\mb{A} =
	\begin{pmatrix}
	4/3 & 0\\
	0 & 1/3
	\end{pmatrix}.
\end{align*}
Then by \eqref{def:c}, we have
\begin{align*}
	c_1 = \dfrac{4}{9},\ c_2= \dfrac{8}{45},\ c_3=0,
\end{align*}
and 
\begin{align*}
	c = \sqrt{\dfrac{4}{9}(\cos^4\theta+\sin^4\theta)+\dfrac{8}{45}\cos^2\theta\sin^2\theta} = \sqrt{\dfrac{4}{9}-\dfrac{32}{45}\cos^2\theta\sin^2\theta}.
\end{align*}
Thus $c\geq \sqrt{4/15}$ for any $\theta\in[0,2\pi)$. Then \eqref{eq:theta} reads as
\begin{align*}
	\mathrm{d}\theta = \left(-\cos\theta\sin\theta + \dfrac{16\eta}{45}(\cos^3\theta\sin\theta-\sin^3\theta\cos\theta)\right)\dd t + \sqrt{\dfrac{4}{9}-\dfrac{32}{45}\cos^2\theta\sin^2\theta}\cdot\dd B.
\end{align*}
According to \eqref{eq:invariantmeasure}, the invariant measure is 
\begin{align*}
	\rho_{\infty}(x)  =\exp\left(\dfrac{1}{\eta}\int_0^x\dfrac{-45\cos\theta\sin\theta + 16\eta(\cos^3\theta\sin\theta-\sin^3\theta\cos\theta)}{10-16\cos^2\theta\sin^2\theta}\dd\theta\right),\ x\in[0,2\pi).
\end{align*}
Direct calculation yields
\begin{align} \label{eq:exinv}
	\rho_{\infty}(x)=\exp\left(-\dfrac{15\sqrt{6}}{16\eta}\arctan\dfrac{2\sqrt{6}\sin^2x}{5-4\sin^2x}\right) \sqrt{\dfrac{5}{8\sin^4x-8\sin^2x+5}}, x\in[0,2\pi).
\end{align}
The explicit formula of the invariant measure, i.e. \eqref{eq:exinv}, shows that if $\eta<<1$, then the mass of the invariant measure concentrates around $x=0$ and $x=\pi$, or in terms of the matrix, around $-\mb{I}_2$ and $\mb{I}_2$, which gives the right principal component decomposition.

\section{The case of $p<n$}

All results for the case of $p=n$ can be extended to the case of $p<n$, by being careful on the size of tensors and rewrite the projection operator. 

First, all regularity and stability results for the semigroup still hold. The proof is exactly the same as in Lemma \ref{lmm:stability} and Theorem \ref{thm:semigroup}, by replacing the terminal index $n$ by $p$ for column indices.

Second, the diffusion approximation is now formulated as
\begin{align}\label{eq:pdiffusionapprox}
\dot{\mb W}=\mb G(\mb A,\mb W)+\eta \mathcal{P}_{T_{\mb W}O(n\times p)}\mb F(\mb W)+\sqrt{\eta}\mathcal{P}_{T_{\mb W}O(n\times p)}\dot{\mb Z},\ \mb W(0)=\mb{W_0}\in O(n\times p).
\end{align}
Here $\mathcal{P}_{T_{\mb W}O(n\times p)}$ is the projection onto the tangent space of $O(n\times p)$ at $\mb W$, see \eqref{def:pP}; $\dot{\mb Z}$ is defined as
\begin{align}
\dot{\mb Z}(\mb W)=(\dot{Z}_{ij})_{n\times p},\ \dot{Z}_{ij}(\mb W)=H_{ijkl}(\mb W)\circ\dot{B}_{kl};
\end{align}
$\mb H(\mb W)=(H_{ijkl}(\mb W))_{n\times p\times n\times p}$ is the coefficient tensor of the Brownian motion; $\mb B=(B_{ij})_{n\times p}$ is the standard Brownian motion in $\R^{n\times p}$; The notation '$\circ$' represents that \eqref{eq:pdiffusionapprox} is an SDE in the Stratonovich sense.

The projection operator onto $T_{\mb W}O(n\times p)$ then reads as
\begin{align}\label{def:pP}
	\mathcal{P}_{T_{\mb W}O(n\times p)}\mb M = (\mb{I_n}-\mb W\mb W^T)\mb M+\dfrac{1}{2}\mb W(\mb W^T\mb M-\mb M^T\mb W).
\end{align}
When $p=n$, $\mb W\mb W^T=\mb{I_n}$ so $\mathcal{P}_{T_{\mb W}O(n\times p)}$ is exactly the projection on $O(n)$; when $p=1$, $\mb W^T\mb M=\mb M^T\mb W$ are all scalars, then 
\begin{align}
	\mathcal{P}_{T_{\mb W}O(n\times p)}\mb M = \mathcal{P}_{T_{\mb W}\mathbb{S}^{n-1}}\mb M= (\mb{I_n}-\mb W\mb W^T)\mb M,
\end{align} 
i.e. $\mathcal{P}_{T_{\mb W}O(n\times p)}$ degenerates to the projection onto the unit sphere. 

Using the same technique and method as in Lemma \ref{prop:orthogonality}, one can prove that \eqref{eq:pdiffusionapprox} stays on $O(n\times p)$; moreover, the diffusion approximation is also of first-order as in Theorem \ref{thm:diffusionapprox}. 

For reversibility, the overdamped Langevin is still reversible. All the proof in Section \ref{sec:OLE} holds generally on $O(n\times p)$.

\section*{Acknowledgement}
Jian-Guo Liu was supported in part by the National Science Foundation (NSF) under award DMS-2106988.

\section{Appendix} \label{sec:appendix}
\subsection{Riemannian manifolds and the Stiefel manifold} \label{subsec:stiefel}
We denote the tangent space at $\mb{m}$ on manifold $\mathcal{M}$ as $T_{\mb{m}}\mathcal{M}$, the tangent vector field on $\mathcal{M}$ as $\Gamma(T\mathcal{M})$. The tangent bundle (the disjoint union of the tangent spaces) is denoted as $T\mathcal{M}$.
\begin{definition} (Riemannian manifolds)
	Suppose that $\mathcal{M}$ is a smooth manifold. A Riemannian manifold $(\mathcal{M},g)$ is a smooth mainfold equipped with an inner product $g_{\mb{m}}$ on $T_{\mb{m}}\mathcal{M}$ at each $\mb{m}\in\mathcal{M}$. Moreover, for any tangent vector field $\dot{\mb{x}}$ and $\dot{\mb{y}}$, the function 
	\begin{align}
	\langle\dot{\mb{x}}(\mb{m}),\dot{\mb{y}}(\mb{m})\rangle_{g_{\mb{m}}}:\ \mathcal{M}\to\R
	\end{align}
	is smooth.
\end{definition}

Given a Riemannian metric $g$ on $M$, the gradient of a smooth function $\mathcal{E}$ on $\mathcal{M}$ is defined as 
\begin{definition} \label{def:gradient}
	(the gradient on the Riemannian manifold) A tangent vector field $\nabla_g\mathcal{E}$ on $M$ is called the gradient of $\mathcal{E}$ w.r.t. the metric $g$ if for every tangent vector field $\dot{\mb{x}}$ on $M$,
	\begin{align} \label{eq:grad}
	\langle \mathcal{E}',\ \dot{\mb{x}}\rangle_F = \langle \nabla_g\mathcal{E},\ \dot{\mb{x}}\rangle_g.
	\end{align} 
	Here $\mathcal{E}'$ is the derivative of $\mathcal{E}$, which is a cotangent vector field.
\end{definition}

Now we consider the Stiefel manifold with the Euclidean metric $g_e$, under the global coordinate $\mb{Q}\in\R^{n\times n}$. We first introduce several important properties of $O(n)$. See \cite{helmke2012optimization} for more details.

\begin{lemma} \label{lmm:Stiefel}
	(the Stiefel manifold)
	The Stiefel manifold $O(n)$ is a smooth, compact manifold of dimension $n(n-1)/2$. The tangent space at $\mb{Q}$ is given by
	\begin{align}\label{eq:tangent}
		T_{\mb{Q}}O(n)=\{\mb{Q}\bm{\Omega}\ |\ \bm{\Omega}\in\R^{n\times n},\ \bm{\Omega}+\bm{\Omega}^T=\mb{0}\},
	\end{align}
while the normal space at $\mb{Q}$ is given by 
\begin{align}\label{eq:normal}
T_{\mb{Q}}O(n)^{\perp}=\{\mb{Q}\bm{\Omega}\ |\ \bm{\Omega}\in\R^{n\times n},\ \bm{\Omega}=\bm{\Omega}^T\}.
\end{align}
\end{lemma}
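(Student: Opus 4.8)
\emph{Proof proposal.} The plan is to realize $O(n)$ as the preimage of a regular value of a smooth map into the symmetric matrices, and then read off compactness, the dimension, and both the tangent and normal spaces from the implicit function theorem. Concretely, I would set $\mathrm{Sym}(n)$ to be the space of symmetric $n\times n$ matrices and consider
\begin{align*}
\Phi:\R^{n\times n}\to\mathrm{Sym}(n),\qquad \Phi(\mb{Q}):=\mb{Q}^T\mb{Q}-\mb{I_n},
\end{align*}
so that $O(n)=\Phi^{-1}(\mb{0})$. Its differential is $D\Phi_{\mb{Q}}(\mb{H})=\mb{H}^T\mb{Q}+\mb{Q}^T\mb{H}$, which lands in $\mathrm{Sym}(n)$. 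First I would verify that $\mb 0$ is a regular value: for $\mb{Q}\in O(n)$ and arbitrary $\mb{S}\in\mathrm{Sym}(n)$, taking $\mb{H}=\tfrac12\mb{Q}\mb{S}$ gives $D\Phi_{\mb{Q}}(\mb{H})=\tfrac12(\mb{S}^T\mb{Q}^T\mb{Q}+\mb{Q}^T\mb{Q}\mb{S})=\mb{S}$, using $\mb{Q}^T\mb{Q}=\mb{I_n}$, so $D\Phi_{\mb{Q}}$ is surjective. By the regular value theorem, $O(n)$ is a smooth embedded submanifold of $\R^{n\times n}$ of dimension $n^2-\dim\mathrm{Sym}(n)=n^2-\tfrac{n(n+1)}{2}=\tfrac{n(n-1)}{2}$. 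Compactness is then immediate: $O(n)$ is closed as the preimage of a point under a continuous map, and bounded since $\|\mb{Q}\|_F^2=\mathrm{tr}(\mb{Q}^T\mb{Q})=\mathrm{tr}(\mb{I_n})=n$ for every $\mb{Q}\in O(n)$, so Heine--Borel applies.

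Next I would identify the tangent space. Since $O(n)$ is cut out by a submersion, $T_{\mb{Q}}O(n)=\ker D\Phi_{\mb{Q}}=\{\mb{H}\in\R^{n\times n}:\mb{H}^T\mb{Q}+\mb{Q}^T\mb{H}=\mb{0}\}$. Because $\mb{Q}$ is invertible, every $\mb{H}$ is uniquely of the form $\mb{H}=\mb{Q}\bm{\Omega}$, and the defining relation becomes
\begin{align*}
\bm{\Omega}^T\mb{Q}^T\mb{Q}+\mb{Q}^T\mb{Q}\bm{\Omega}=\bm{\Omega}^T+\bm{\Omega}=\mb{0},
\end{align*}
i.e.\ $\bm{\Omega}$ is skew-symmetric, which is exactly \eqref{eq:tangent}; the dimension of the skew-symmetric matrices is $\tfrac{n(n-1)}{2}$, consistent with the manifold dimension. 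For the normal space relative to the Euclidean metric $g_e$, I would first show the displayed set is contained in $T_{\mb{Q}}O(n)^\perp$: for $\bm{\Sigma}=\bm{\Sigma}^T$ and any skew $\bm{\Omega}$,
\begin{align*}
\langle\mb{Q}\bm{\Sigma},\,\mb{Q}\bm{\Omega}\rangle_F=\mathrm{tr}(\bm{\Sigma}^T\mb{Q}^T\mb{Q}\bm{\Omega})=\mathrm{tr}(\bm{\Sigma}\bm{\Omega}),
\end{align*}
and $\mathrm{tr}(\bm{\Sigma}\bm{\Omega})=\mathrm{tr}\big((\bm{\Sigma}\bm{\Omega})^T\big)=\mathrm{tr}(\bm{\Omega}^T\bm{\Sigma}^T)=-\mathrm{tr}(\bm{\Omega}\bm{\Sigma})=-\mathrm{tr}(\bm{\Sigma}\bm{\Omega})$ by the cyclic property of the trace, so this inner product vanishes. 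Hence $\{\mb{Q}\bm{\Sigma}:\bm{\Sigma}=\bm{\Sigma}^T\}\subseteq T_{\mb{Q}}O(n)^\perp$, and since its dimension is $\tfrac{n(n+1)}{2}=n^2-\dim T_{\mb{Q}}O(n)$, the inclusion is an equality, giving \eqref{eq:normal}.

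There is no substantial obstacle here; the statement is classical. The only places demanding care are the surjectivity computation for $D\Phi_{\mb{Q}}$ (which crucially uses $\mb{Q}^T\mb{Q}=\mb{I_n}$, so the argument really only produces $T_{\mb{Q}}O(n)$ at points of $O(n)$), the reduction $\mb{H}=\mb{Q}\bm{\Omega}$ that turns the tangency condition into skew-symmetry, and the dimension bookkeeping ensuring that the tangent and normal subspaces are complementary in $\R^{n\times n}$. Everything else is a direct trace manipulation, and for the background facts I would cite \cite{helmke2012optimization}.
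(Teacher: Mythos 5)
Your proof is correct and complete. The paper does not actually supply a proof of this lemma; it only cites \cite{helmke2012optimization}, so there is no in-paper argument to compare against. What you have written is the standard and efficient route: realize $O(n)=\Phi^{-1}(\mb{0})$ for the submersion $\Phi(\mb{Q})=\mb{Q}^T\mb{Q}-\mb{I_n}$ into $\mathrm{Sym}(n)$, verify surjectivity of $D\Phi_{\mb{Q}}$ at points of $O(n)$ by the explicit preimage $\mb{H}=\tfrac12\mb{Q}\mb{S}$, read off smoothness and $\dim O(n)=n^2-\tfrac{n(n+1)}{2}=\tfrac{n(n-1)}{2}$ from the regular value theorem, get compactness from closedness plus the trace computation $\|\mb{Q}\|_F^2=n$, and then identify $T_{\mb{Q}}O(n)=\ker D\Phi_{\mb{Q}}$ and reparametrize by $\mb{H}=\mb{Q}\bm{\Omega}$ (legitimate since $\mb{Q}$ is invertible) to obtain the skew-symmetric form. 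The normal-space identification via the trace identity $\mathrm{tr}(\bm{\Sigma}\bm{\Omega})=-\mathrm{tr}(\bm{\Sigma}\bm{\Omega})$ followed by the dimension count is also exactly right, and you correctly note that the surjectivity argument genuinely uses $\mb{Q}^T\mb{Q}=\mb{I_n}$, so it applies only at points of $O(n)$, which is all the regular value theorem needs. No gaps; this would serve well as a self-contained replacement for the citation.
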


See \cite{helmke2012optimization} for proof of Lemma \ref{lmm:Stiefel}. By Lemma \ref{lmm:Stiefel}, we can prove that for any $\mb{M}\in\R^{n\times n}$, the projection on the tangent spaces and the normal spaces are respectively:
\begin{align}\label{def:P}
	\mathcal{P}_{T_{\mb{Q}}O(n)}\mb{M}:=\dfrac{1}{2}(\mb{M}-\mb{Q}\mb{M}^T\mb{Q}),\ \mathcal{P}_{T_{\mb{Q}}O(n)^{\perp}}\mb{M}:=\dfrac{1}{2}(\mb{M}+\mb{Q}\mb{M}^T\mb{Q}).
\end{align}

\begin{lemma} (Gradient on $(O(n),g_e)$) Suppose that $\varphi(\mb{Q}):O(n)\to\R$ is a restriction of a smooth function (still denoted as $\varphi:\R^{n\times n}\to\R$) on $O(n)$. Then gradient of $\varphi$ w.r.t. $g_e$ at point $\mb{Q}$ is given by
	\begin{align}
		\nabla_{g_e}\varphi:=\mathcal{P}_{T_{\mb{Q}}O(n)}(\nabla\varphi)=\dfrac{1}{2}(\nabla\varphi-\mb{Q}(\nabla\varphi)^T\mb{Q}),
	\end{align}  
	here $
	\nabla\varphi\in\R^{n\times n}$ is the gradient of $\varphi$ in $\R^{n\times n}$, i.e. $(\nabla\varphi)_{ij}=\dfrac{\p\varphi}{\p q_{ij}}$.
\end{lemma}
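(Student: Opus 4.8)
The plan is to verify that $\mathcal{P}_{T_{\mb{Q}}O(n)}(\nabla\varphi)$ satisfies the defining identity \eqref{eq:grad} for the Riemannian gradient and then appeal to uniqueness of the gradient. Since $g_e$ is the Euclidean metric, the inner product on each tangent space $T_{\mb{Q}}O(n)$ is merely the restriction of the Frobenius inner product $\langle\cdot,\cdot\rangle_F$, so the whole statement reduces to an orthogonal-decomposition argument in the ambient space $(\R^{n\times n},\langle\cdot,\cdot\rangle_F)$.

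First I would fix $\mb{Q}\in O(n)$ and an arbitrary tangent vector field $\dot{\mb{x}}$, and choose a smooth curve $\gamma:(-\delta,\delta)\to O(n)$ with $\gamma(0)=\mb{Q}$ and $\dot{\gamma}(0)=\dot{\mb{x}}(\mb{Q})$. Applying the chain rule to the ambient smooth extension $\varphi:\R^{n\times n}\to\R$ gives
\[
\langle\varphi',\dot{\mb{x}}\rangle_F\big|_{\mb{Q}}
=\dfrac{\dd}{\dd t}\Big|_{t=0}\varphi(\gamma(t))
=\langle\nabla\varphi(\mb{Q}),\dot{\mb{x}}(\mb{Q})\rangle_F ,
\]
where $\nabla\varphi$ denotes the ambient gradient, $(\nabla\varphi)_{ij}=\p\varphi/\p q_{ij}$. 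This identifies the cotangent pairing on the left with a Frobenius pairing against $\nabla\varphi$. Next I would invoke the orthogonal splitting $\R^{n\times n}=T_{\mb{Q}}O(n)\oplus T_{\mb{Q}}O(n)^{\perp}$ from Lemma \ref{lmm:Stiefel}, together with the explicit projections in \eqref{def:P}. Decomposing $\nabla\varphi=\mathcal{P}_{T_{\mb{Q}}O(n)}(\nabla\varphi)+\mathcal{P}_{T_{\mb{Q}}O(n)^{\perp}}(\nabla\varphi)$ and pairing with $\dot{\mb{x}}(\mb{Q})\in T_{\mb{Q}}O(n)$, the normal part is annihilated, so
\[
\langle\nabla\varphi,\dot{\mb{x}}\rangle_F
=\langle\mathcal{P}_{T_{\mb{Q}}O(n)}(\nabla\varphi),\dot{\mb{x}}\rangle_F
=\langle\mathcal{P}_{T_{\mb{Q}}O(n)}(\nabla\varphi),\dot{\mb{x}}\rangle_{g_e}.
\]
Since $\dot{\mb{x}}$ was arbitrary and $\mathcal{P}_{T_{\mb{Q}}O(n)}(\nabla\varphi)$ lies in $T_{\mb{Q}}O(n)$, this is exactly \eqref{eq:grad}; uniqueness of the Riemannian gradient then gives $\nabla_{g_e}\varphi=\mathcal{P}_{T_{\mb{Q}}O(n)}(\nabla\varphi)$, and the closed form $\tfrac{1}{2}(\nabla\varphi-\mb{Q}(\nabla\varphi)^T\mb{Q})$ is read off from \eqref{def:P}.

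The one point I would treat with care — the only non-cosmetic step — is checking that the two maps in \eqref{def:P} really are the orthogonal projections onto $T_{\mb{Q}}O(n)$ and $T_{\mb{Q}}O(n)^{\perp}$ as described in Lemma \ref{lmm:Stiefel}, i.e.\ that they are complementary and Frobenius-orthogonal. Complementarity $\mb{M}=\mathcal{P}_{T_{\mb{Q}}O(n)}\mb{M}+\mathcal{P}_{T_{\mb{Q}}O(n)^{\perp}}\mb{M}$ is immediate from the formulas. Orthogonality follows because for $\mb{Q}\bm{\Omega}$ with $\bm{\Omega}^T=-\bm{\Omega}$ and $\mb{Q}\bm{\Xi}$ with $\bm{\Xi}^T=\bm{\Xi}$,
\[
\langle\mb{Q}\bm{\Omega},\mb{Q}\bm{\Xi}\rangle_F=\mathrm{tr}(\bm{\Omega}^T\mb{Q}^T\mb{Q}\bm{\Xi})=\mathrm{tr}(\bm{\Omega}^T\bm{\Xi})=0,
\]
since the product of a skew-symmetric and a symmetric matrix has zero trace; and one checks from \eqref{def:P}, using $\mb{Q}^T\mb{Q}=\mb{I_n}$, that $\mathcal{P}_{T_{\mb{Q}}O(n)}\mb{M}$ has the form $\mb{Q}\bm{\Omega}$ with $\bm{\Omega}$ skew-symmetric and $\mathcal{P}_{T_{\mb{Q}}O(n)^{\perp}}\mb{M}$ the form $\mb{Q}\bm{\Xi}$ with $\bm{\Xi}$ symmetric. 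Given this, everything above is a short routine verification, and I do not expect any real obstacle beyond bookkeeping.
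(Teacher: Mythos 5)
Your proposal is correct and follows essentially the same route as the paper: decompose the ambient gradient into its tangential and normal parts at $\mb{Q}$, observe that the normal part pairs to zero against any tangent vector $\mb{Q}\bm{\Omega}$ because a symmetric and a skew-symmetric matrix are Frobenius-orthogonal, and conclude from the defining identity \eqref{eq:grad}. Your additional explicit checks (identifying $\varphi'$ with $\nabla\varphi$ via a curve, and verifying that \eqref{def:P} really gives complementary Frobenius-orthogonal projections onto the spaces of Lemma \ref{lmm:Stiefel}) are points the paper cites rather than re-derives, but the substance of the argument is the same.
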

\begin{proof}
	We just need to prove that for any $\mb{Q}\in O(n)$ and any tangent vector at $\mb{Q}$, \eqref{eq:grad} holds. By Lemma \ref{lmm:Stiefel}, a tangent vector at $\mb{Q}$ can be represented as $\mb{Q}\bm{\Omega}$ where $\bm{\Omega}$ is skew-symmetric. Therefore, for any $\bm{\Omega}$ that is skew-symmetric, we have 
	\begin{align}
		\langle \nabla\varphi, \mb{Q}\bm{\Omega}\rangle_F = \langle \mathcal{P}_{T_{\mb{Q}}O(n)}(\nabla\varphi),\ \mb{Q}\bm{\Omega}\rangle_F + \langle \mathcal{P}_{T_{\mb{Q}}O(n)^{\perp}}(\nabla\varphi),\ \mb{Q}\bm{\Omega}\rangle_F.
	\end{align} 
	Notice that 
	\begin{align*}
		\langle \mathcal{P}_{T_{\mb{Q}}O(n)^{\perp}}(\nabla\varphi),\ \mb{Q}\bm{\Omega}\rangle_F &= \langle \mb{Q}(\mb{Q}^T\nabla\varphi+(\nabla\varphi)^T\mb{Q}),\ \mb{Q}\bm{\Omega}\rangle_F\\
		&= \langle\mb{Q}^T\nabla\varphi+(\nabla\varphi)^T\mb{Q},\ \bm{\Omega}\rangle_F\\
		&=0,
	\end{align*}
	since $\langle \mb{M},\ \mb{N}\rangle_F=0$ if $\mb{M}$ is symmetric while $\mb{N}$ is skew-symmetric. Therefore,
	\begin{align*}
		\langle \nabla\varphi, \mb{Q}\bm{\Omega}\rangle_F = \langle \mathcal{P}_{T_{\mb{Q}}O(n)}(\nabla\varphi),\ \mb{Q}\bm{\Omega}\rangle_F = \langle \mathcal{P}_{T_{\mb{Q}}O(n)}(\nabla\varphi),\ \mb{Q}\bm{\Omega}\rangle_{g_e}.
	\end{align*}
	So $\nabla_{g_e}\varphi=\mathcal{P}_{T_{\mb{Q}}O(n)}(\nabla\varphi)$.
\end{proof}
Under this global coordinate, the divergence on $(O(n), g_e)$ can also be explicitly computed. The $ij$ entry of $\nabla_{g_e}\varphi$ is given by
\begin{align}
	(\nabla_{g_e}\varphi)_{ij}= \dfrac{1}{2}\left(\dfrac{\p\varphi}{\p q_{ij}}-\sum_{k,l}q_{ik}q_{lj}\dfrac{\p\varphi}{\p q_{lk}}\right).
\end{align}
So given a tangent vector field $\mb{H}(\mb{Q})=(h_{ij}(\mb{Q}))$, the divergence of it is defined by
\begin{align}
	\nabla_{g_e}\cdot\mb{H}(\mb{Q}):=\sum_{i,j}(\nabla_{g_e}(h_{ij}(\mb{Q})))_{ij}.
\end{align}
The Laplace-Beltrami operator is then defined as:
\begin{align}
	\Delta_{g_e}\varphi:=\nabla_{g_e}\cdot\nabla_{g_e}\varphi.
\end{align}
Explicit expression of the Laplace-Beltrami operator is given by the following lemma:
\begin{lemma} \label{lmm:LBO}
	(Laplace-Beltrami operator) The Laplace-Beltrami operator on $(O(n),g_e)$ is given by
	\begin{align} \label{eq:LBO}
		\Delta_{g_e}\varphi=\dfrac{1}{2}\left(\sum_{i,j}\dfrac{\p^2\varphi}{\p{q_{ij}^2}}-(n-1)\sum_{i,j}q_{ij}\dfrac{\p\varphi}{\p q_{ij}}-\sum_{i,j,k,l}q_{ik}q_{lj}\dfrac{\p^2\varphi}{\p q_{ij}\p q_{lk}}\right).
	\end{align}
\end{lemma}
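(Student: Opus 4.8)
The plan is to compute $\Delta_{g_e}\varphi=\nabla_{g_e}\cdot(\nabla_{g_e}\varphi)$ directly in the global coordinate $\mb{Q}\in\R^{n\times n}$, by substituting the componentwise gradient formula into the divergence formula stated above. Abbreviate $f_{ij}:=(\nabla_{g_e}\varphi)_{ij}=\tfrac12\bigl(\partial_{q_{ij}}\varphi-\sum_{k,l}q_{ik}q_{lj}\,\partial_{q_{lk}}\varphi\bigr)$, so that by the definitions of $\nabla_{g_e}$ and of the divergence,
\begin{align*}
\Delta_{g_e}\varphi=\sum_{i,j}(\nabla_{g_e}f_{ij})_{ij}
=\frac12\sum_{i,j}\frac{\partial f_{ij}}{\partial q_{ij}}-\frac12\sum_{i,j,k,l}q_{ik}q_{lj}\frac{\partial f_{ij}}{\partial q_{lk}}.
\end{align*}
The whole proof then reduces to differentiating $f_{ij}$ and simplifying. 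I would expand every derivative using $\partial q_{ab}/\partial q_{cd}=\delta_{ac}\delta_{bd}$, noting that differentiating the two $\mb{Q}$-factors inside $f_{ij}$ produces, besides the expected second-order terms, several first-order terms of the shape $\sum q_{ab}\,\partial_{q_{ab}}\varphi$ with various index contractions.

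The second step is to restrict to $O(n)$ and collapse the four-index sums using $\mb{Q}^T\mb{Q}=\mb{Q}\mb{Q}^T=\mb{I_n}$, i.e.\ $\sum_i q_{ia}q_{ib}=\delta_{ab}$ and $\sum_j q_{aj}q_{bj}=\delta_{ab}$. This is the only place where the constraint enters, and it must be postponed until all the ambient derivatives of the fixed smooth extension of $\varphi$ have been taken. After this contraction, each of the two pieces $\tfrac12\sum_{ij}\partial_{q_{ij}}f_{ij}$ and $-\tfrac12\sum_{ijkl}q_{ik}q_{lj}\partial_{q_{lk}}f_{ij}$ collapses to a linear combination of exactly three quantities: the Euclidean Laplacian $\sum_{ij}\partial^2_{q_{ij}}\varphi$, the radial term $\sum_{ij}q_{ij}\,\partial_{q_{ij}}\varphi$, and the mixed term $\sum_{ijkl}q_{ik}q_{lj}\,\partial^2_{q_{ij}q_{lk}}\varphi$.

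Finally I would add up the coefficients. The two Laplacian contributions sum to $\tfrac12$ and the two mixed contributions sum to $-\tfrac12$; for the radial term the divergence piece contributes $-\tfrac n2$ (from differentiating the two projection factors in the definition of $\nabla_{g_e}$) while the projection-correction piece contributes $+\tfrac12$, for a net $-\tfrac{n-1}{2}$ --- which is precisely the mean-curvature contribution of $O(n)\subset\R^{n\times n}$. Collecting the three terms gives exactly \eqref{eq:LBO}. I do not expect any real obstacle here: the argument is pure bookkeeping, and the only care required is to keep free and summed indices distinct while differentiating and to invoke orthogonality only after all derivatives have been computed. (Alternatively, one could cite the general formula expressing $\Delta_M$ in terms of the ambient Laplacian, the normal Hessian, and the mean-curvature vector together with the known second fundamental form of $O(n)$, but the coordinate computation above is self-contained given the gradient and divergence formulas already established.)
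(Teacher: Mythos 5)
Your proposal takes essentially the same route as the paper's proof: differentiate the componentwise gradient $f_{ij}$ to get $\partial f_{ij}/\partial q_{ij}$ and $\partial f_{ij}/\partial q_{lk}$, substitute into the coordinate formula for the divergence, and invoke $\mb{Q}^T\mb{Q}=\mb{Q}\mb{Q}^T=\mb{I_n}$ to contract the four-index sums. Your coefficient bookkeeping ($\tfrac14+\tfrac14$ for the Laplacian term, $-\tfrac n2+\tfrac12$ for the radial term, $-\tfrac14-\tfrac14$ for the mixed term) is correct and spells out exactly the ``substituting the above formulas'' step that the paper leaves implicit.
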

\begin{proof}
	By the expression of the divergence and gradient, we have 
	\begin{align*}
	\Delta_{g_e}\varphi=\sum_{i,j}\left(\dfrac{\p h_{ij}}{\p q_{ij}}-\sum_{k,l}q_{ik}q_{lj}\dfrac{\p h_{ij}}{\p q_{lk}}\right),\ h_{ij}=\sum_{i,j}\left(\dfrac{\p\varphi}{\p q_{ij}}-\sum_{k,l}q_{ik}q_{lj}\dfrac{\p\varphi}{\p q_{lk}}\right).
	\end{align*}
	Thus 
	\begin{align*}
		\dfrac{\p h_{ij}}{\p q_{ij}} &= \dfrac{1}{2}\left(\dfrac{\p^2\varphi}{\p q_{ij}^2}-\sum_{k,l}\delta_{kl}q_{lj}\dfrac{\p\varphi}{\p q_{lk}}-\sum_{k,l}\delta_{il}q_{ik}\dfrac{\p\varphi}{\p q_{lk}}-\sum_{k,l}q_{ik}q_{lj}\dfrac{\p^2\varphi}{\p q_{ij}q_{lk}}\right)\\
		&= \dfrac{1}{2}\left(\dfrac{\p^2\varphi}{\p q_{ij}^2}-\sum_{l}q_{lj}\dfrac{\p\varphi}{\p q_{lj}}-\sum_{k}q_{ik}\dfrac{\p\varphi}{\p q_{ik}}-\sum_{k,l}q_{ik}q_{lj}\dfrac{\p^2\varphi}{\p q_{ij}q_{lk}}\right),
	\end{align*}
	and 
	\begin{align*}
		\dfrac{\p h_{ij}}{\p q_{lk}}=\dfrac{1}{2}\left(\dfrac{\p^2\varphi}{\p q_{ij}\p q_{lk}}-\sum_{k',l'}\delta_{il}\delta_{kk'}q_{l'j}\dfrac{\p\varphi}{\p q_{l'k'}}-\sum_{k',l'}\delta_{ll'}\delta_{kj}q_{ik'}\dfrac{\p\varphi}{\p q_{l'k'}}-\sum_{k',l'}q_{ik'}q_{l'j}\dfrac{\p^2\varphi}{\p q_{l'k'}\p q_{lk}}\right).
	\end{align*}
	Substituting the above formulas into the Laplacian operator, we can derive \eqref{eq:LBO}.
\end{proof}

\subsection{Proofs of lemmas and omitted calculations} \label{subsec:proofs}

\subsubsection{Section \ref{sec:diff}}
\begin{proof} [Proof of Lemma \ref{lmm:stability}]
	According to \eqref{def:G} and \eqref{eq:rewriteleading}, direct computation yields
	\begin{equation} \label{eq:Fnorm}
	\begin{aligned}
	\|\mb{W}(k)\|_F^2 &= \|\mb{W}(k-1)\|_F^2+2\eta\cdot\mathrm{tr}(\mb{W}(k-1)^T\mb{A}(k)\mb{W}(k-1))\\
	&-2\eta\cdot\mathrm{tr}(\mb{W}(k-1)^T\mb{W}(k-1)\mb{W}(k-1)^T\mb{A}(k)\mb{W}(k-1))\\
	&+\eta^2\cdot\mathrm{tr}(\mb{W}(k-1)^T\mb{A}(k)^2\mb{W}(k-1))\\ &-2\eta^2\cdot\mathrm{tr}(\mb{W}(k-1)^T\mb{A}(k)\mb{W}(k-1)\mb{W}(k-1)^T\mb{A}(k)\mb{W}(k-1))\\
	&+\eta^2\cdot\mathrm{tr}(\mb{W}(k-1)^T\mb{A}(k)\mb{W}(k-1)\mb{W}(k-1)^T\mb{W}(k-1)\mb{W}(k-1)^T\mb{A}(k)\mb{W}(k-1))\\
	&-2\eta^2\cdot\mathrm{tr}(\mb{W}(k-1)^T\mb{A}(k)\mb{W}(k-1)\mb{W}(k-1)^T\mb{W}(k-1)\mb{\Sigma}(\mb{A}(k),\mb{W}(k-1)))\\
	&+\eta^2\cdot\mathrm{tr}(\mb{\Sigma}(\mb{A}(k),\mb{W}(k-1))^T\mb{W}(k-1)^T\mb{W}(k-1)\mb{\Sigma}(\mb{A}(k),\mb{W}(k-1))).
	\end{aligned}
	\end{equation}
	By definition of $\mb{\Sigma}$ in \eqref{def:G}, we have
	\begin{equation} \label{eq:estsigma}
	\begin{aligned}
	\|\mb{\Sigma(\mb{A}(k),\mb{W}(k-1)}\|_F &= \sqrt{\sum_{i\neq j}(\mb{w}_i(k-1)^T\mb{A}(k)\mb{w}_j(k-1))^2}\\
	&= \sqrt{\sum_{i\neq j}(\mb{w}_i(k-1)^T\mb{x}(k))^2(\mb{w}_j(k-1)^T\mb{x}(k))^2}\\
	&\leq \sum_{i=1}^n(\mb{w}_i(k-1)^T\mb{x}(k))^2\\
	&\leq M^2 \sum_{i=1}^n\|\mb{w}_i(k-1)\|_2^2\\
	&= M^2\|\mb{W}(k-1)\|_F^2.
	\end{aligned}
	\end{equation}
	By the following norm inequality: $\|\mb{M}\|_2\leq \|\mb{M}\|_F\leq \sqrt{n}\|\mb{M}\|_2$, \eqref{ass:L_inf} and \eqref{eq:estsigma}, we derive the following estimates for each term in the above equality in the almost surely sense:
	\begin{equation} \label{eq:est1}
	\begin{aligned}
	\mathrm{tr}(\mb{W}(k-1)^T\mb{A}(k)\mb{W}(k-1))=\|\mb{W}(k-1)^T\mb{x}(k)\|_2^2&\leq \|\mb{W}(k-1)^T\|_2^2\|\mb{x}(k)\|_2^2\\
	&\leq M^2\|\mb{W}(k-1)\|_F^2.
	\end{aligned}
	\end{equation}
	\begin{equation} \label{eq:est2}
	\begin{aligned}
	\mathrm{tr}(\mb{W}(k-1)^T\mb{A}(k)^2\mb{W}(k-1))&=\|\mb{x}(k)\|^2_2\mathrm{tr}(\mb{W}(k-1)^T\mb{A}(k)\mb{W}(k-1))\\
	&\leq M^4\|\mb{W}(k-1)\|_F^2.
	\end{aligned}
	\end{equation}
	\begin{equation} \label{eq:est3}
	\begin{aligned}
	&\ \ \ \   \mathrm{tr}(\mb{W}(k-1)^T\mb{A}(k)\mb{W}(k-1)\mb{W}(k-1)^T\mb{W}(k-1)\mb{W}(k-1)^T\mb{A}(k)\mb{W}(k-1))\\
	&=\mathrm{tr}((\mb{W}(k-1)^T\mb{x}(k))(\mb{x}(k)^T\mb{W}(k-1)\mb{W}(k-1)^T\mb{W}(k-1)\mb{W}(k-1)^T\mb{A}(k)\mb{W}(k-1)))\\
	&\leq \|\mb{W}(k-1)^T\mb{x}(k)\|_2\|\mb{x}(k)^T\mb{W}(k-1)\mb{W}(k-1)^T\mb{W}(k-1)\mb{W}(k-1)^T\mb{A}(k)\mb{W}(k-1)\|_2\\
	&\leq M^2\|\mb{W}(k-1)\|_2^6\|\mb{A}(k)\|_2\\
	&\leq M^4\|\mb{W}(k-1)\|_F^6.
	\end{aligned}
	\end{equation}
	\begin{equation} \label{eq:est4}
	\begin{aligned}
	&\ \ \ \  |\mathrm{tr}(\mb{W}(k-1)^T\mb{A}(k)\mb{W}(k-1)\mb{W}(k-1)^T\mb{W}(k-1)\mb{\Sigma}(\mb{A}(k),\mb{W}(k-1)))|\\
	&=|\mathrm{tr}((\mb{W}(k-1)^T\mb{x}(k))(\mb{x}(k)^T\mb{W}(k-1)\mb{W}(k-1)^T\mb{W}(k-1)\mb{\Sigma}(\mb{A}(k),\mb{W}(k-1))))|\\
	&\leq\|(\mb{W}(k-1)^T\mb{x}(k)\|_2\|\mb{x}(k)^T\mb{W}(k-1)\mb{W}(k-1)^T\mb{W}(k-1)\mb{\Sigma}(\mb{A}(k),\mb{W}(k-1))\|_2\\
	&\leq M^2\|\mb{W}(k-1)\|^4_2\|\mb{\Sigma}(\mb{A}(k),\mb{W}(k-1))\|_2\\
	&\leq M^2\|\mb{W}(k-1)\|_F^4\|\mb{\Sigma}(\mb{A}(k),\mb{W}(k-1))\|_F\\
	&\leq M^4\|\mb{W}(k-1)\|_F^6.
	\end{aligned}
	\end{equation}
	\begin{equation} \label{eq:est5}
	\begin{aligned}
	&\ \ \ \  \mathrm{tr}(\mb{\Sigma}(\mb{A}(k),\mb{W}(k-1))^T\mb{W}(k-1)^T\mb{W}(k-1)\mb{\Sigma}(\mb{A}(k),\mb{W}(k-1)))\\
	&= \|\mb{W}(k-1)\mb{\Sigma}(\mb{A}(k),\mb{W}(k-1))\|_F^2\\
	&\leq n\|\mb{W}(k-1)\mb{\Sigma}(\mb{A}(k),\mb{W}(k-1))\|_2^2\\
	&\leq n\|\mb{W}(k-1)\|_2^2\|\mb{\Sigma}(\mb{A}(k),\mb{W}(k-1))\|_2^2.\\
	&\leq nM^4\|\mb{W}(k-1)\|_F^6.
	\end{aligned}
	\end{equation}
	
	Substituting \eqref{eq:est1} $\sim$ \eqref{eq:est5} into \eqref{eq:Fnorm} yields
	\begin{align}
	\|\mb{W}(k)\|_F^2 \leq (1+2M^2\eta)\|\mb{W}(k-1)\|_F^2 + \eta^2(M^4\|\mb{W}(k-1)\|_F^2+(n+3)M^4\|\mb{W}(k-1)\|_F^6).
	\end{align}
	
	Select $\eta(T)$ as
	\begin{align}
	\eta(T)= \dfrac{1}{M^4(1+(n+3)r^4e^{2T(2M^2+1)})}.
	\end{align} 
	Then we can prove that for any $\eta\leq \eta(T)$, the Markov chain generated by \eqref{eq:leadingorder} which starts from $\mb{W_0}$ satisfies
	\begin{align}
	\|\mb{W}(k)\|_F^2\leq r^2e^{2M^2+1}, k=1,2,...,[T/\eta],\ \mathrm{a.s.}.
	\end{align}
	We prove by induction. Given any $\eta\leq \eta(T)$, if $\|\mb{W}(k-1)\|_F^2\leq r^2e^{T(2M^2+1)}$, then 
	\begin{align*}
	\|\mb{W}(k)\|_F^2 &\leq (1+2M^2\eta)\|\mb{W}(k-1)\|_F^2 + \eta^2(M^4\|\mb{W}(k-1)\|_F^2+(n+3)M^4\|\mb{W}(k-1)\|_F^6)\\
	&\leq (1+2M^2\eta)\|\mb{W}(k-1)\|_F^2 + (\eta M^4(1+(n+3)r^4e^{2T(2M^2+1)}))\cdot\eta\cdot\|\mb{W}(k-1)\|_F^2\\
	&\leq (1+(2M^2+1)\eta)\|\mb{W}(k-1)\|_F^2.
	\end{align*}
	Remember that $\|\mb{W_0}\|_F^2\leq r^2$, so $\|\mb{W_0}\|_F^2\leq r^2e^{2M^2+1}$, hence for any $k=1,2,...,[T/\eta]$,
	\begin{align*}
	\|\mb{W}(k)\|_F^2\leq (1+(2M^2+1)\eta)^kr^2\leq (1+(2M^2+1)\eta)^{T/\eta}r^2\leq r^2e^{T(2M^2+1)}.
	\end{align*}
	Thus taking $C(T)=r^2e^{T(2M^2+1)}$ concludes the proof.
\end{proof}

\begin{proof}[Proof of Lemma \ref{lmm:orth}]
	Because $\mb M=(M_{ijkl})$ is the covariance matrix of $\mb G$, it is positive semidefinite. Therefore, the square root of $\mb M$ is well-defined, which satisfies (i),(ii) and (iii) above. Denote it as $\mb N$. 
	
	When $\mb W\in O(n)$, we have proved that for any symmetric $\mb B\in\R^{n\times n}$, $\mb G(\mb B, \mb W)\in T_{\mb W}O(n)$, so 
	\begin{align*}
	\mathcal{P}_{T_{\mb W}O(n)}\mb G(\mb A-\mb x\mb x^T,\mb W) = \mb G(\mb A-\mb x\mb x^T,\mb W),
	\end{align*} 
	i.e. $P_{ijkl}g_{kl}=g_{ij}$ by Lemma \ref{lmm:projection}. Therefore, by linearity of expectation and symmetry
	\begin{align*}
	M_{ijkl} &= \E[g_{ij}(\mb A-\mb x\mb x^T,\mb W)g_{kl}(\mb A-\mb x\mb x^T,\mb W)]\\
	&=\E[P_{ijrs}g_{rs}(\mb A-\mb x\mb x^T,\mb W)g_{xy}(\mb A-\mb x\mb x^T,\mb W)P_{klxy}]\\
	&=P_{ijrs}\E[g_{rs}(\mb A-\mb x\mb x^T,\mb W)g_{xy}(\mb A-\mb x\mb x^T,\mb W)]P_{xykl}\\
	&=P_{ijrs}M_{rsxy}P_{xykl},
	\end{align*}
	or in matrix notation, $\mb M=\mb P\mb M\mb P$. Thus by (iii) in Lemma \ref{lmm:projection}, we have
	\begin{align*}
	P_{ijrs}M_{rskl} = P_{ijrs}P_{rsuv}M_{uvxy}P_{xykl} = P_{ijrs}P_{uvrs}M_{uvxy}P_{xykl} = P_{ijuv}M_{uvxy}P_{xykl},
	\end{align*}
	or $\mb{PM}=\mb{PMP}$. Similarly, 
	\begin{align*}
	M_{ijrs}P_{rskl} = P_{ijuv}M_{uvxy}P_{xyrs}P_{rskl} = P_{ijuv}M_{uvxy}P_{xykl},
	\end{align*}
	or $\mb{MP}=\mb{PMP}$. Thus 
	\begin{align*}
	M_{ijrs}P_{rskl} = P_{ijrs}M_{rskl}.
	\end{align*}
	So $\mb P$ and $\mb M$ are commutative. Because $\mb P$ is symmetric, so $\mb P$ is also commutative with the square root of $\mb M$. Thus 
	\begin{align*}
	N_{ijrs}P_{rskl} = P_{ijrs}N_{rskl}.
	\end{align*} 
	Thus 
	\begin{align*}
	P_{ijrs}N_{rsxy}P_{xyuv}N_{uvkl} = P_{ijrs}N_{rsxy}N_{xyuv}P_{uvkl} = P_{ijrs}M_{rsuv}P_{uvkl} = M_{ijkl},
	\end{align*}
	or $\mb{PNPN} = \mb{PNNP} =\mb{PMP} = \mb M$. So $PN$ is also the square root of $\mb M$, by uniqueness,
	\begin{align*}
	P_{ijrs}N_{rskl}=N_{ijkl},
	\end{align*}
	i.e. $\mb{PN}=\mb N$.
\end{proof}

\begin{proof} [Calculations in Theorem \ref{thm:diffusionapprox}]
	Direct calculation yields
	\begin{equation}\label{eq:taylorS}
	\begin{aligned}
	S\vp(\mb{W}) &= \E\left[\vp(\mb{W}+\eta\mb{G}(\mb{x}\mb{x}^T,\mb{W}))\right]\\
	&= \vp(\mb{W}) +\left(g_{ij}(\mb A,\mb{W})\dfrac{\p\vp}{\p w_{ij}}\right)\eta+\left(\dfrac{1}{2}\E\left[g_{ij}(\mb x\mb x^T,\mb W)g_{kl}(\mb x\mb x^T,\mb W)\right]\dfrac{\p^2 \varphi}{\p w_{ij}\p w_{kl}}\right)\eta^2+O(\eta^3).
	\end{aligned}
	\end{equation}
	
	Meanwhile,
	\begin{equation*}
	\begin{aligned}
	e^{\eta\mathcal{L}}\varphi(\mb W) = \varphi(\mb W)+\eta\mathcal{L}\varphi(\mb W) +\dfrac{1}{2}\eta^2\mathcal{L}^2\varphi(\mb W)+O(\eta^3). 
	\end{aligned}
	\end{equation*}
	
	By \eqref{eq:L}, we have
	\begin{align}
	\begin{aligned}
	\mathcal{L}^2\varphi &=\mathcal{L}\left[g_{ij}(\mb A,\mb W)\dfrac{\p\vp}{\p w_{ij}}+O(\eta)\right]\\
	&=g_{kl}(\mb A,\mb W)\dfrac{\p }{\p w_{kl}}\left(g_{ij}(\mb A,\mb W)\dfrac{\p\vp}{\p w_{ij}}\right)+O(\eta)\\
	&=\left(g_{kl}(\mb A,\mb W)\dfrac{\p g_{ij}(\mb A,\mb W)}{\p w_{kl}}\right)\dfrac{\p\vp}{\p w_{ij}}+g_{ij}(\mb A,\mb W)g_{kl}(\mb A,\mb W)\dfrac{\p^2\vp}{\p w_{ij}\p w_{kl}}+O(\eta).
	\end{aligned}
	\end{align}
	
	Thus 
	\begin{equation}\label{eq:mid}
		\begin{aligned} 
		e^{\eta\mathcal{L}}\varphi(\mb W)&=\varphi(\mb W) + \eta\left[\left(g_{ij}(\mb A,\mb W)+\eta P_{ijkl}f_{kl} + \dfrac{\eta}{2}J_{ij}\right)\dfrac{\p \varphi}{\p w_{ij}}+\dfrac{\eta}{2}K_{ijrs}K_{klrs}\dfrac{\p^2\vp}{\p w_{ij}w_{kl}}\right]\\ 
		&+\dfrac{\eta^2}{2}\left[\left(g_{kl}(\mb A,\mb W)\dfrac{\p g_{ij}(\mb A,\mb W)}{\p w_{kl}}\right)\dfrac{\p\vp}{\p w_{ij}}+g_{ij}(\mb A,\mb W)g_{kl}(\mb A,\mb W)\dfrac{\p^2\vp}{\p w_{ij}\p w_{kl}}\right]+O(\eta^3).
		\end{aligned}
	\end{equation}

	Recast \eqref{eq:mid} in the order of $\eta$, we have
	\begin{equation}\label{eq:taylorL}
		\begin{aligned}
		e^{\eta\mathcal{L}}\varphi(\mb W)&=\varphi(\mb W) +\left(g_{ij}(\mb A,\mb{W})\dfrac{\p\vp}{\p w_{ij}}\right)\eta +\\
		&\left[\left(P_{ijkl}f_{kl}+\dfrac{1}{2}J_{ij}+\dfrac{1}{2}g_{kl}(\mb A,\mb W)\dfrac{\p g_{ij}(\mb A,\mb W)}{\p w_{kl}}\right)\dfrac{\p \varphi}{\p w_{ij}}\right]\eta^2\\
		&+\left[\left(K_{ijrs}K_{klrs}+g_{ij}(\mb A,\mb W)g_{kl}(\mb A,\mb W)\right)\dfrac{\p^2\vp}{\p w_{ij}\p w_{kl}}\right]\eta^2+O(\eta^3).
		\end{aligned}
	\end{equation}
	
	Comparing the $\eta^2$ term in \eqref{eq:taylorS} and \eqref{eq:taylorL}, we have
	\begin{align*}
	\dfrac{\p\vp}{\p w_{ij}}&: P_{ijkl}f_{kl}+\dfrac{1}{2}J_{ij}+\dfrac{1}{2}g_{kl}(\mb A,\mb W)\dfrac{\p g_{ij}(\mb A,\mb W)}{\p w_{kl}}=0;\\
	\dfrac{\p^2\vp}{\p w_{ij}\p w_{kl}}&:
	K_{ijrs}K_{klrs}+g_{kl}(\mb A,\mb W)g_{ij}(\mb A,\mb W) = \E\left[g_{ij}(\mb x\mb x^T,\mb W)g_{kl}(\mb x\mb x^T,\mb W)\right].
	\end{align*}
	Thus $\mb F=(f_{ij})_{n\times n}$ and $\mb H=(H_{ijkl})_{n\times n\times n\times n}$ should satisfy
	\begin{equation*} 
	\begin{aligned}
	P_{ijkl}f_{kl}&=-\dfrac{1}{2}J_{ij}-\dfrac{1}{2}g_{kl}(\mb A,\mb W)\dfrac{\p g_{ij}(\mb A,\mb W)}{\p w_{kl}},\\
	P_{ijuv}H_{uvrs}P_{klxy}H_{xyrs} &= M_{ijkl}=\E[g_{ij}(\mb A-\mb x\mb x^T,\mb W)g_{kl}(\mb A-\mb x\mb x^T,\mb W)],
	\end{aligned}
	\end{equation*}
	which is \eqref{eq:second}

\end{proof}

\subsubsection{Section \ref{sec:sde}}

\begin{proof} [Proof of Lemma \ref{lmm:n=2reform}]
	We first compute $\mb{H(\mb{W})}$. By Proposition \ref{prop:orthogonality}, we know that $\mb{W}\mb{W}^T=\mb{I_2}$. Thus
	\begin{align*}
		\mb{G(\mb{x}\mb{x}^T-A,\mb{W})}&=(\mb{x}\mb{x}^T-\mb{A})\mb{W}-\mb{W}\mb{W}^T(\mb{x}\mb{x}^T-\mb{A})\mb{W}+\mb{W}\mb{\Sigma}(\mb{x}\mb{x}^T-\mb{A},\mb{W})\\
		&=\mb{W}\mb{\Sigma}(\mb{x}\mb{x}^T-\mb{A},\mb{W}).
	\end{align*}
	Denote $\mb{B}=\mb{x}\mb{x}-\mb{A}$. Since $n=2$, we have 
	\begin{align*}
		\mb{\Sigma}(\mb{B},\mb{W})=
		\begin{pmatrix}
		0 & -\mb{w_1}\cdot\mb{B}\mb{w_2}\\
		\mb{w_1}\cdot\mb{B}\mb{w_2} & 0\\
		\end{pmatrix}.
	\end{align*}
	Denote $b=\mb{w_1}\cdot\mb{B}\mb{w_2}$, then 
	\begin{align*}
		\mb{G(\mb{x}\mb{x}^T-A,\mb{W})} = \mb{W}\begin{pmatrix}
		0 & -b\\
		b & 0\\
		\end{pmatrix} 
		= 
		\begin{pmatrix}
		w_{1,2}b & -w_{1,1}b\\
		w_{2,2}b & -w_{2,1}b\\
		\end{pmatrix}.
	\end{align*}
	Thus 
	\begin{align*}
		\mb{vec}(\mb{G}(\mb{B},\mb{W})) = (w_{1,2}b,\ -w_{1,1}b,\ w_{2,2}b,\ -w_{2,1}b)^T.
	\end{align*}
	Denote $\mb{u}=(w_{1,2},\ -w_{1,1},\ w_{2,2},\ -w_{2,1})^T$. Then the covariance matrix defined in \eqref{def:covariance} is 
	\begin{equation}
		\begin{aligned}
		\mb{M(W)} &= \E[\mb{vec}(\mb{G}(\mb{B},\mb{W}))\mb{vec}(\mb{G}(\mb{B},\mb{W}))^T]\\
		&=\E[b^2]\mb{u}\mb{u}^T.
		\end{aligned}
	\end{equation}
	Therefore, 
	\begin{align*}
	\mb{H(W)} = \sqrt{\mb{M(\mb{W})}} = \dfrac{\sqrt{\E[b^2]}}{\|\mb{u}\|_2}\mb{u}\mb{u}^T.
	\end{align*}
	Remember that $\mb{W}\in O(2)$, thus $\|\mb{u}\|_2=\sqrt{2}$. So we have
	\begin{align} \label{eq:H}
	\mb{H(W)}= \dfrac{\sqrt{\E[b^2]}}{\sqrt{2}}\mb{u}\mb{u}^T.
	\end{align}
	
	Substituting \eqref{eq:H} into \eqref{eq:reform1}, we have 
	\begin{align}
		\mathrm{d}\mb{vec}(\mb{W}) = \mb{vec}(\mb{G}(\mb{A},\mb{W}))\mathrm{d}t + \dfrac{\sqrt{\eta\E[b^2]}}{\sqrt{2}}\mb{u}\mb{u}^T\circ\mathrm{d}\mb{B}.
	\end{align}
	Moreover, because $\|\mb{u}\|_2=\sqrt{2}$ and $\mb{B}(t)$ is the standard Brownian motion in $\R^4$, thus $\mb{u}^T\mathrm{d}\mb{B}$ has the same law with $\sqrt{2}\mathrm{d}B$ where $B(t)$ is the standard Brownian motion in one dimension. Thus \eqref{eq:reform1} can also be reformulated as
	\begin{align}
		\mathrm{d}\mb{vec}(\mb{W}) = \mb{vec}(\mb{G}(\mb{A},\mb{W}))\mathrm{d}t + \sqrt{\eta\E[b^2]}\mb{u}\circ\mathrm{d}B(t).
	\end{align}
	Moreover, because $\mb{W}(t)\in O(2)$, thus $\mb{G}(\mb{A},\mb{W})=\mb{F}(\mb{W})$ where $\mb{F}$ is defined in \eqref{def:F}. Thus \eqref{eq:reform1} can be directly transformed in the form of matrices:
	\begin{align}
	\mathrm{d}\mb{W} = \mb{F(W)}\mathrm{d}t + \sqrt{\eta\E[b^2]}\cdot\mb{Q}\circ\mathrm{d}\mb{Z},
	\end{align}
	where $\mb{Z}$ is defined in \eqref{eq:Z}.
	
	Now we compute $\E[b^2]$. Direct computation yields
	\begin{align*}
		b = \mb{w_1}^T\mb{B}\mb{w_2} = b_{1,1}w_{1,1}w_{1,2}+b_{1,2}(w_{1,1}w_{2,2}+w_{2,1}w_{1,2})+b_{2,2}w_{2,1}w_{2,2}.
	\end{align*}
	Thus 
	\begin{align*}
		b^2 &= b_{1,1}^2w_{1,1}^2w_{1,2}^2+b_{1,2}^2(w_{1,1}w_{2,2}+w_{1,2}w_{2,1})^2+b_{2,2}^2w_{2,1}^2w_{2,2}^2+2b_{1,1}b_{2,2}w_{1,1}w_{1,2}w_{2,1}w_{2,2}\\
		&+2b_{1,1}b_{1,2}w_{1,1}w_{2,2}w_{1,2}+2b_{1,1}b_{1,2}w_{1,1}w_{1,2}^2w_{2,1}+2b_{1,2}b_{2,2}w_{1,1}w_{2,1}w_{2,2}^2+2b_{1,2}b_{2,2}w_{1,2}w_{2,1}^2w_{2,2}.
	\end{align*}
	Meanwhile, we know $b_{i,j}=x_ix_j-\E[x_ix_j]$ for $1\leq i,j\leq 2$, thus for any $i,j,i',j'=1,2$, we have
	\begin{align*}
		\E[b_{i,j}b_{i',j'}]=\E[x_ix_jx_{i'}x_{j'}]-\E[x_ix_j]\E[x_{i'}x_{j'}].
	\end{align*} 
	Substituting this into expression of $b^2$, we have
	\begin{align*}
		\E[b^2]&=\mathrm{var}(x_1^2)w_{1,1}^2w_{1,2}^2+\mathrm{var}(x_1x_2)(w_{1,1}^2w_{2,2}^2+w_{2,1}^2w_{1,2}^2+2w_{1,1}w_{1,2}w_{2,1}w_{2,2})+\mathrm{var}(x_2^2)w_{2,1}^2w_{2,2}^2\\
		&+2(\E[x_1^2x_2^2]-\E[x_1^2]\E[x_2^2])w_{1,1}w_{1,2}w_{2,1}w_{2,2}+2(\E[x_1^3x_2]-\E[x_1^2]\E[x_1x_2])w_{1,1}w_{1,2}(w_{1,1}w_{2,2}+w_{1,2}w_{2,1})\\
		&+2(\E[x_1x_2^3]-\E[x_2^2]\E[x_1x_2])w_{2,1}w_{2,2}(w_{1,1}w_{2,2}+w_{1,2}w_{2,1}).
	\end{align*}
	Using $w_{1,1}^2=w_{2,2}^2,w_{1,2}^2=w_{2,1}^2$, we then derive
	\begin{align} \label{eq:b}
		\E[b^2]=c_1(\mb{W})(w_{1,1}^4+w_{1,2}^4)+c_2(\mb{W})w_{1,1}^2w_{1,2}^2+c_3(\mb{W})w_{1,1}w_{1,2}(w_{1,1}w_{2,2}+w_{1,2}w_{2,1}),
	\end{align}
	where $c_1,c_2$ and $c_3$ is defined in \eqref{def:c}. Thus by taking $c(\mb{W})=\sqrt{\E[b^2]}$, we derive \eqref{eq:reform1} which concludes the proof.
\end{proof}

\begin{proof} [Proof of Lemma \ref{lmm:reform2}]
	We first write \eqref{eq:reform1} in It\'o's sense. Because we have assumed that $|\mb{W}(t)|=1$, thus we only need to consider the equation for $w_{1,1}$ and $w_{1,2}$. In the It\'o sense, we have
	\begin{align*}
		\mathrm{d}w_{1,1} &=[(\mb{F(\mb{W})})_{1,1}+h_1(\mb{W})]\mathrm{d}t+\sqrt{\eta}\cdot c(\mb{W})w_{1,2}\mathrm{d}B\\ 
		\mathrm{d}w_{1,2} &=[(\mb{F(\mb{W})})_{1,2}+h_2(\mb{W})]\mathrm{d}t-\sqrt{\eta}\cdot c(\mb{W})w_{1,1}\mathrm{d}B,
	\end{align*}
	where $h_1$ and $h_2$ are defined in \ref{eq:correction}. Denote $g_1=\sqrt{\eta}c(\mb{W})w_{1,2}, g_2=-\sqrt{\eta}c(\mb{W})w_{1,1}$, then $h_1$ and $h_2$ are computed as
	\begin{equation}
		\begin{aligned}
		h_1(\mb{W})&=\dfrac{1}{2}\left(g_1\dfrac{\p g_1}{\p w_{1,1}}+g_2\dfrac{\p g_1}{\p w_{1,2}}\right)=\dfrac{\eta}{2}\left(\dfrac{1}{2}w_{1,2}^2\dfrac{\p c(\mb{W})^2}{\p w_{1,1}}-\dfrac{1}{2}w_{1,1}w_{1,2}\dfrac{\p c(\mb{W}^2)}{\p w_{1,2}}-w_{1,1}c(\mb{W})^2\right)\\
		h_2(\mb{W})&=\dfrac{1}{2}\left(g_1\dfrac{\p g_2}{\p w_{1,1}}+g_2\dfrac{\p g_2}{\p w_{1,2}}\right)=\dfrac{\eta}{2}\left(\dfrac{1}{2}w_{1,1}^2\dfrac{\p c(\mb{W})^2}{\p w_{1,2}}-\dfrac{1}{2}w_{1,1}w_{1,2}\dfrac{\p c(\mb{W}^2)}{\p w_{1,1}}-w_{1,2}c(\mb{W})^2\right).
		\end{aligned}
	\end{equation}
	Here we used the chain rule: $c\dfrac{\p c}{\p w}=\dfrac{1}{2}\dfrac{\p c^2}{\p w}$. Substituting the above equation into the SDE in It\'o's sense yields
	\begin{equation} \label{eq:help2}
		\begin{aligned} 
		\mathrm{d}w_{1,1} &= \left((\E[x_2^2]-\E[x_1^2])w_{1,1}w_{1,2}+\dfrac{\eta}{4}w_{1,2}^2\dfrac{\p c^2}{\p w_{1,1}}-\dfrac{\eta}{4}w_{1,1}w_{1,2}\dfrac{\p c^2}{\p w_{1,2}}-\dfrac{\eta}{2}w_{1,1}c(\mb{W})^2\right)\mathrm{d}t\\
		&+\sqrt{\eta}c(\mb{W})w_{1,2}\mathrm{d}B.
		\end{aligned}
	\end{equation}
	
	Now consider the process of $\theta$. Suppose that $\theta(t)$ satisfies the following SDE in Ito's sense:
	\begin{align*}
		\mathrm{d}\theta = f(\theta)\mathrm{d}t + \sqrt{\eta}\cdot g(\theta)\mathrm{d}B.
	\end{align*}
	Then Ito's isometry yields 
	\begin{align*}
		\mathrm{d}\cos\theta &= -\sin\theta\cdot\mathrm{d}\theta -\dfrac{\eta g^2(\theta)}{2}\mathrm{d}t\\
		&= \left(-\sin\theta\cdot f(\theta)-\dfrac{\eta\cos\theta}{2}g^2(\theta)\right)\mathrm{d}t-\sqrt{\eta}\cdot\sin(\theta)g(\theta)\mathrm{d}B.
	\end{align*}
	Replacing $w_{1,1}$ by $\cos\theta$, $w_{1,2}$ by $\sin\theta$ in \eqref{eq:help2} and comparing the coefficients of it with the above equation yields
	\begin{equation}
	\begin{aligned}
	g(\theta) &= -c(\theta),\\
	f(\theta) &= (\E[x_2^2]-\E[x_1^2])\sin\theta\cos\theta+\eta\cdot\dfrac{2c_1(\theta)-c_2(\theta)}{2}(\cos^3\theta\sin\theta-\cos\theta\sin^3\theta)\\
	&+\eta\cdot\dfrac{3c_3(\theta)}{2}\cos^2\theta\sin^2\theta.
	\end{aligned}
	\end{equation} 
	Here we used \eqref{eq:b} since $c^2(\mb{W})=\E[b^2]$. This is exactly the SDE in \eqref{eq:theta}.
\end{proof}

\begin{proof}[Proof of Theorem \ref{thm:expconvoftheta}]
	In the following proof, $C$ is just a general constant that may vary among equations. Because $\rho(x,t),\ t\geq 0$ is the law of $\theta(t)$, so $\rho$ solves \eqref{eq:FPfortheta} on $\T\times\R^+$. Thus by the periodic boundary condition,
	\begin{align*}
	\dfrac{\dd}{\dd t}\int_{\T}\rho(x,t)\dd x= \dfrac{1}{2}\int_{\T}\left[-\dfrac{\p (f(x)\rho(x,t))}{\p x}+\dfrac{1}{2}\cdot\dfrac{\p^2 (\eta g^2(x)\rho(x,t))}{\p x^2}\right]\dd x=0.
	\end{align*}
	So $\int_{\T}\rho(x,t)\dd x=1,\ t\geq 0$.
	
	Now consider the Fokker-Planck operator $\mathcal{L}_1^*$ which is self-adjoint in $L^2(\T,\dd x/\rho_{\infty})$:
	\begin{align*}
	\mathcal{L}^*_1:D(\mathcal{L}^*_1)\subset L^2(\T,\dd x/\rho_{\infty})\to L^2(\T,\dd x/\rho_{\infty}),\ \mathcal{L}_1^*p:=-\dfrac{\dd}{\dd x}\left(\rho_{\infty}(x)\dfrac{\dd}{\dd x}\left(\dfrac{p(x)}{\rho_{\infty}(x)}\right)\right). 
	\end{align*} 
	Here $D(\mathcal{L}^*_1)=H^2(\T,\dd x/\mu)$. A direct calculation yields
	\begin{align*}
	\langle \mathcal{L}1^*p,q\rangle_{\dd x/\rho_{\infty}}=\int_{\T}\rho_{\infty}(x)\dfrac{\dd}{\dd x}\left(\dfrac{p(x)}{\rho_{\infty}(x)}\right)\dfrac{\dd}{\dd x}\left(\dfrac{q(x)}{\rho_{\infty}(x)}\right)\dd x,
	\end{align*}
	thus $\mathcal{L}_1^*$ is semi-positive definite, and 
	\begin{align*}
	\mathcal{L}_1^*p=0\iff p(x)=c\rho_{\infty}(x).
	\end{align*}
	So $0$ is the simple principle eigenvalue of $\mathcal{L}_1^*$ with $\rho_{\infty}(x)$ as the eigenvector.
	
	Moreover, $0$ is isolated. For any $\lambda>0$, we prove that $(\lambda+\mathcal{L}_1^*)^{-1}$ is compact. Let $\{g_n\}_{n=1}^{\infty}\in L^2(\T,\dd x/\rho_{\infty})$ be a bounded sequence, with 
	\begin{align*}
	(\lambda+\mathcal{L}^*)u_n=g_n, n=1,2,....
	\end{align*}
	To prove that $(\lambda+\mathcal{L}^*)^{-1}$ is compact, we just need to prove that there exists a subsequence of $\{u_n\}_{n=1}^{\infty}$ which is Cauchy in $L^2(\T,\dd x/\rho_{\infty})$. Because $\mathcal{L}^*$ is semi-positive definite, so $(\lambda+\mathcal{L}^*)$ is bounded, thus $\{u_n\}_{n=1}^{\infty}$ is bounded in $L^2(\T,\dd x/\rho_{\infty})$. By the Cauchy-Schwatz inequality, we have
	\begin{align}
	\langle\mathcal{L}_1^*u_n,u_n\rangle_{\dd x/\rho_{\infty}} = \langle u_n,g_n\rangle_{\dd x/\rho_{\infty}} -\lambda \|u_n\|_{L^2(\T,\dd x/\rho_{\infty})}^2\leq C.
	\end{align}
	Here $C$ is a constant. Thus 
	\begin{align*}
	\|u_n/\rho_{\infty}\|_{H^1(\T,\rho_{\infty}\dd x)}^2 &=\int_\T\rho_{\infty}(x)\left(\dfrac{\dd}{\dd x}\dfrac{u_n(x)}{\rho_{\infty}(x)}\right)^2\dd x + \int_{\T}\rho_{\infty}(x)\left(\dfrac{u_n(x)}{\rho_{\infty}(x)}\right)^2\dd x \\
	&= \|u_n\|_{L^2(\T,\dd x/\rho_{\infty})}+\langle \mathcal{L}_1^*u_n,u_n\rangle_{\dd x/\rho_{\infty}}\leq C.
	\end{align*}
	So $u_n/\rho_{\infty}$ is bounded in $H^1(\T,\rho_{\infty}\dd x)$. By the compact embedding $H^1(\T,\rho_{\infty}\dd x)\subset\subset L^2(\T,\rho_{\infty}\dd x)$, we know that there exists a subsequence of $u_n$ (still denoted as $u_n$) such that 
	\begin{align*}
	\dfrac{u_n}{\rho_{\infty}}\to \dfrac{u^*}{\rho_{\infty}}\ \mathrm{in}\ L^2(\T,\rho_{\infty}\dd x),
	\end{align*} 
	or equivalently
	\begin{align*}
	u_n\to u^*\ \mathrm{in}\ L^2(\T,\dd x/\rho_{\infty}).
	\end{align*}
	So $(\lambda+\mathcal{L}^*)^{-1}$ is a compact operator. Thus the spectrum of $(\lambda+\mathcal{L}^*)^{-1}$ only admits 0 as an accumulation point. So $0$ is an isolated point in the spectrum of $\mathcal{L}^*$. Thus for any $p\in L^2(\T,\dd x/\rho_{\infty})$ such that $\int_{\T}p(x)\dd x=0$, we have the following Poincare's inequality: there exists a constant $c>0$ such that
	\begin{align} \label{eq:Poincare}
	\int_\T\rho_{\infty}(x)\left(\dfrac{\dd}{\dd x}\dfrac{p(x)}{\rho_{\infty}(x)}\right)^2\dd x=\langle \mathcal{L}_1^*p,p\rangle_{L^2(\T,\dd x/\rho_{\infty})}\geq c\|p\|_{L^2(\T,\dd x/\rho_{\infty})}^2=c\int_\T\dfrac{|p(x)|^2}{\rho_{\infty}(x)}\dd x.
	\end{align}
	Multiplying $\rho(x,t)/\rho_{\infty}(x)-1$ on both sides of \eqref{eq:FPfortheta} and substituting $p=\rho(x,t)-\rho_{\infty}(x)$ in \eqref{eq:Poincare} yields
	\begin{align}
	\dfrac{1}{2}\dfrac{\dd }{\dd t}\int_{\T}\dfrac{(\rho(x,t)-\rho_{\infty}(x))^2}{\rho_{\infty}(x)}\dd x = -\int_{\T}\rho_{\infty}(x)\left(\dfrac{\dd }{\dd x}\dfrac{\rho(x,t)}{\rho_{\infty}(x)}\right)^2\dd x\leq -c\int_\T\dfrac{|\rho(x,t)-\rho_{\infty}(x)|^2}{\rho_{\infty}(x)}\dd x.
	\end{align} 
	Thus by Gronwall's inequality, \eqref{eq:expconvofmeasure} holds.
\end{proof}

\bibliographystyle{plain}
\bibliography{PCAODE}
\end{document}